\newcommand{\thmlevel}{subsection}
\def\NewTheorem#1{%
  \newaliascnt{#1}{thm}
  \newtheorem{#1}[#1]{\csname #1Name\endcsname}
  \aliascntresetthe{#1}
  \expandafter\def\csname #1autorefname\endcsname{\csname #1name\endcsname}
  \expandafter\def\csname #1Autorefname\endcsname{\csname #1Name\endcsname}
}
\newcommand{\theoremName}{\iflanguage{francais}{Th\'eor\`eme}{Theorem}}
\newcommand{\pbName}{\iflanguage{francais}{Probl\`eme}{Problem}}
\newcommand{\dfName}{\iflanguage{francais}{D\'efinition}{Definition}}
\newtheorem{thm}{\theoremName}[\thmlevel]
\newtheorem{thmintro}{\theoremName}
\theoremstyle{definition}
\theoremstyle{remark}
\renewenvironment{proof}[1][]{\par
  \pushQED{\qed}%
  \normalfont \topsep6\p@\@plus6\p@\relax
  \trivlist
  \item[\hskip\labelsep
        \bfseries
    \proofname\ifthenelse{\equal{#1}{}}{}{\textmd{ (#1)}}\@addpunct{.}]\ignorespaces
}{%
  \popQED\endtrivlist\@endpefalse
}
\newcommand{\N}{\mathbb{N}}
\newcommand{\T}{\mathbb{T}}
\newcommand{\pt}{{*}}
\newcommand{\dual}[1]{{#1}^{\vee}}
\newcommand{\op}{^{\mathrm{op}}}
\newcommand{\noloc}{\,:}
\newcommand{\loccit}{\emph{loc. cit.} }
\newcommand{\Cc}{\mathcal{C}}
\newcommand{\Dd}{\mathcal{D}}
\newcommand{\sSets}{\mathbf{sSets}}
\newcommand{\sifted}{\operatorname{\presh_\Sigma}}
\newcommand{\Qcoh}{\mathbf{Qcoh}}
\newcommand{\cdgaunbounded}{\mathbf{cdga}}
\newcommand{\cdga}{\cdgaunbounded^{\leq 0}}
\newcommand{\dgLie}{\mathbf{dgLie}}
\newcommand{\dgAlg}{\mathbf{dgAlg}}
\newcommand{\dgMod}{\mathbf{dgMod}}
\newcommand{\Map}{\operatorname{Map}}
\DeclareMathOperator*{\colim}{colim}
\newcommand{\ptfin}{\mathrm{Fin}^\pt}
\newcommand{\inftyCatu}[1]{\inftyCat^{\mathbb{#1}}}
\newcommand{\PresLeftu}[1]{\mathbf{Pr}^{\mathrm{L,}\mathbb{#1}}_\infty}
\newcommand{\monoidalinftyCatu}[1]{\inftyCat^{\otimes, \mathbb{#1}}}
\newcommand{\dStF}{\dSt^{\operatorname{f}}}
\newcommand{\Tens}{\operatorname{T}}
\newcommand{\coho}{\operatorname C}
\newcommand{\siftedst}{\presh_\Sigma^\mathrm{st}}
\newcommand{\dgLieLib}{\dgLie^{\operatorname{f,ft,}\geq1}}
\newcommand{\dgModLib}{\dgMod^{\operatorname{f,ft,}\geq1}}
\newcommand{\dgRep}{\operatorname{\mathbf{dgRep}}}
\newcommand{\Envel}{\operatorname{\mathcal U \hspace{-1pt}}}
\newcommand{\Sym}{\operatorname{Sym}}
\newcommand{\libre}{\operatorname{Free}}
\newcommand{\oubli}{\operatorname{Forget}}
\newcommand{\lie}{\mathfrak{L}}
\newcommand{\formal}{\mathcal{F}}
\newcommand{\tgtlie}{\ell}
\newcommand{\lierep}{\operatorname{Rep}}
\newcommand{\MCdgMod}{\mathrm{dgMod}}
\newcommand{\MCdgLie}{\mathrm{dgLie}}
\newcommand{\MCcdgaunbounded}{\mathrm{cdga}}
\newcommand{\MCcdga}{\mathrm{cdga}^{\leq 0}}
\newcommand{\MCdgAlg}{\mathrm{dgAlg}}
\newcommandx*{\el}[4][2=\eldebutpardefaut,4={,}]{#1_{#2}#4\dots#4#1_{#3}}
\newcommand{\quot}[2]{\ensuremath \mathchoice {\displaystyle #1 \raisebox{-2pt}{$\displaystyle \hspace{-1pt}{/} $} \raisebox{-4pt}{$\displaystyle \hspace{-1pt}{#2}$}}{\textstyle #1 \raisebox{-1pt}{$\textstyle \hspace{-1pt}{/} $} \raisebox{-2pt}{$\textstyle \hspace{-1pt}{#2}$}}{\scriptstyle #1 \raisebox{-1pt}{$\scriptstyle \hspace{-1pt}{/} $} \raisebox{-2pt}{$\scriptstyle \hspace{-1pt}{#2}$}}{\scriptscriptstyle #1 \raisebox{-1pt}{$\scriptscriptstyle \hspace{-1pt}{/} $} \raisebox{-2pt}{$\scriptscriptstyle \hspace{-1pt}{#2}$}}}
\newcommand{\mymatrix}{\shorthandoff{;:!?} \xymatrix}
\newcommand{\Lcot}{\mathbb{L}}
\newcommand{\id}{\operatorname{id}}
\newcommand{\inftyCat}{\mathbf{Cat}_\infty}
\newcommand{\presh}{\operatorname{\mathcal{P}}}
\newcommand{\dAff}{\mathbf{dAff}}
\newcommand{\dgArt}{\mathbf{dgArt}}
\newcommand{\dSt}{\mathbf{dSt}}
\newcommand{\Homint}{\operatorname{\underline{Hom}}}
\newcommand{\B}{\operatorname{B}}
\newcommand{\dStArtlfp}{\dSt^{\mathrm{Art,lfp}}}
\newcommand{\dgLieGood}{\dgLie^{\operatorname{good}}}
\newcommand{\dgExt}{\operatorname{\mathbf{dgExt}}}
\newcommand{\homol}{\mathrm H}
\newcommandx*{\timesunder}[5][1={},2={},3=-2pt,4=0pt,5=0mm,usedefault]{\times_{\makebox[#5]{\raisebox{#3}{\ensuremath{\scriptstyle #1}}}}^{\makebox[#5]{\raisebox{#4}{\ensuremath{\scriptstyle #2}}}}}
\newcommand{\Spec}{\operatorname{Spec}}
\newcommand{\eldebutpardefaut}{1}
\newcommand{\app}[4]{\begin{array}{c@{\hskip 2pt}c@{\hskip 2pt}c} #1 & \to & #2 \\ #3 & \mapsto & #4 \end{array}}
\newcommand{\comma}[2]{\ensuremath \mathchoice {\raisebox{4pt}{$\displaystyle #1 $} \raisebox{2pt}{$\displaystyle / $} \displaystyle \hspace{-1pt}{#2}}{\raisebox{2pt}{$\textstyle #1 $} \raisebox{1pt}{$\textstyle / $} \textstyle \hspace{-1pt}{#2}}{\raisebox{2pt}{$\scriptstyle #1 $} \raisebox{1pt}{$\scriptstyle / $} \scriptstyle \hspace{-1pt}{#2}}{\raisebox{2pt}{$\scriptscriptstyle #1 $} \raisebox{1pt}{$\scriptscriptstyle / $} \scriptscriptstyle \hspace{-1pt}{#2}}}
\newcommandx*{\dcell}[6][1,2,3,4,5={=>},6={1pc},usedefault]{\ar@/^#6/[#1]^{#2}_{}="UP" \ar@/_#6/[#1]_{#3}^{}="DOWN" \ar @{#5} "UP";"DOWN" ^{#4} }
\newcommandx*{\cocart}[3][1=-1,2=8,3=10,usedefault]{\ar@{-}[]+U+<-#2pt,-#1pt>;[]+U+<-#2pt,-#1pt>+<0pt,#3pt> \ar@{-}[]+U+<-#2pt,-#1pt>;[]+U+<-#2pt,-#1pt>+<-#3pt,0pt>}
\newcommand{\ev}{\operatorname{ev}}
\newcommand{\Oo}{\mathcal{O}}
\newcommand{\Perf}{\mathbf{Perf}}
\newcommand{\RHomint}{\operatorname{\mathbb{R}\underline{Hom}}}
\newcommand{\Fct}{\operatorname{Fct}}
\newcommand{\dStptArt}{\dSt^{\pt,\operatorname{Art}}}
\newcommand{\gl}{\mathfrak{gl}}
\newcommand{\unit}{1}
\newcommand{\for}[1]{{#1}^{\operatorname{f}}}
\newcommand{\Lqcoh}{\operatorname{L}_{\operatorname{qcoh}}}
\newcommand{\atiyah}{\operatorname{\mathbf{at}}}
\newcommand{\MCdgRep}{\mathrm{dgRep}}
\newcommand{\MCdgLieLib}{\MCdgLie^{\operatorname{f,ft,}\geq1}}
\newcommand{\adjrep}{\mathrm{Ad}}
\newcommand{\Ee}{\mathcal E}
\newcommand{\End}{\operatorname{End}}
\newcommandx*{\cart}[3][1=1,2=5,3=10,usedefault]{\ar@{-}[]+D+<#3pt,#1pt>+<#2pt,0pt>;[]+D+<#3pt,-#3pt>+<#2pt,#1pt> \ar@{-}[]+D+<0pt,-#3pt>+<#2pt,#1pt>;[]+D+<#3pt,-#3pt>+<#2pt,#1pt>}
\let\originalleft\left
\let\originalright\right
\renewcommand{\left}{\mathopen{}\mathclose\bgroup\originalleft}
\renewcommand{\right}{\aftergroup\egroup\originalright}
\def\DeclareMathBinOp{\@ifstar{\declaremathbinop@star}{\declaremathbinop@nostar}}
\def\declaremathbinop@star#1#2{\def#1{\test@subnexp@star#2}}
\def\test@subnexp@star#1{\@ifnextchar_{\isol@subnexp@star#1}{\test@exp@star#1}}
\def\test@exp@star#1{\@ifnextchar^{\isol@expnsub@star#1}{\mathbin{#1}}}
\def\isol@subnexp@star#1_#2{\@ifnextchar^{\eval@subnexp@star#1_#2}{\mathbin{\operatorname*{#1}_{#2}}}}
\def\eval@subnexp@star#1_#2^#3{\mathbin{\operatorname*{#1}_{#2}^{#3}}}
\def\isol@expnsub@star#1^#2{\@ifnextchar_{\eval@expnsub@star#1^#2}{\mathbin{\operatorname*{#1}^{#2}}}}
\def\eval@expnsub@star#1^#2_#3{\mathbin{\operatorname*{#1}_{#3}^{#2}}}
\def\declaremathbinop@nostar#1#2{\def#1{\test@subnexp@nostar#2}}
\def\test@subnexp@nostar#1{\@ifnextchar_{\isol@subnexp@nostar#1}{\test@exp@nostar#1}}
\def\test@exp@nostar#1{\@ifnextchar^{\isol@expnsub@nostar#1}{\mathbin{#1}}}
\def\isol@subnexp@nostar#1_#2{\@ifnextchar^{\eval@subnexp@nostar#1_#2}{\mathbin{\underset{#2}{#1}}}}
\def\eval@subnexp@nostar#1_#2^#3{\mathbin{\overset{#3}{\underset{#2}{#1}}}}
\def\isol@expnsub@nostar#1^#2{\@ifnextchar_{\eval@expnsub@nostar#1^#2}{\mathbin{\overset{#2}{#1}}}}
\def\eval@expnsub@nostar#1^#2_#3{\mathbin{\overset{#2}{\underset{#3}{#1}}}}
\let\OLDtimes\times
\DeclareMathBinOp*{\times}{\OLDtimes}
\let\OLDamalg\amalg
\DeclareMathBinOp*{\amalg}{\OLDamalg}
\let\OLDotimes\otimes
\DeclareMathBinOp*{\otimes}{\OLDotimes}
\let\OLDwedge\wedge
\DeclareMathBinOp*{\wedge}{\OLDwedge}
\def\DeclareArrow#1#2{\def#1{\test@subnexp#2}}
\def\test@subnexp#1{\@ifnextchar_{\isol@subnexp#1}{\test@exp#1}}
\def\test@exp#1{\@ifnextchar^{\isol@expnsub#1}{#1}}
\def\isol@subnexp#1_#2{\@ifnextchar^{\eval@subnexp#1_#2}{\underset{#2}{#1}}}
\def\eval@subnexp#1_#2^#3{\underset{#2}{\overset{#3}{#1}}}
\def\isol@expnsub#1^#2{\@ifnextchar_{\eval@expnsub#1^#2}{\overset{#2}{#1}}}
\def\eval@expnsub#1^#2_#3{\overset{#2}{\underset{#3}{#1}}}
\let\OLDto\to
\DeclareArrow{\to}{\OLDto}
\DeclareArrow{\from}{\leftarrow}
\setlist[enumerate]{label=\emph{(\roman*)},ref=\emph{(\roman*)}}
\ifdraft{\CompileMatrices}{}
\title{Tangent Lie algebra of derived Artin stacks}
\author{Benjamin Hennion}
\date{\today}
\newlist{assertions}{enumerate}{1}
\setlist[assertions]{label={(\alph*)}, ref={assertion (\alph*)}}
\newlist{disjunction}{enumerate}{1}
\setlist[disjunction]{label={(\arabic*)}, ref={case (\arabic*)}}
\begin{document}

\selectlanguage{english}
\maketitle

\begin{abstract}
Since the work of Mikhail Kapranov in \cite{kapranov:atiyah}, it is known that the shifted tangent complex $\T_X[-1]$ of a smooth algebraic variety $X$ is endowed with a weak Lie structure.
Moreover any complex of quasi-coherent sheaves on $X$ is endowed with a weak Lie action of this tangent Lie algebra.
We will generalize this result to (finite enough) derived Artin stacks, without any smoothness assumption.
This in particular applies to singular schemes.
This work uses tools of both derived algebraic geometry and $\infty$-category theory.
\end{abstract}

\tableofcontents

\section*{Introduction}
\addcontentsline{toc}{section}{Introduction}

It is a common knowledge that the shifted tangent complex $\T_X[-1]$ of a nice enough geometric stack $X$ in characteristic zero should be endowed with a Lie structure.
Moreover any quasi-coherent sheaf on $X$ should admits an action of the Lie algebra $\T_X[-1]$ through its Atiyah class.
In the article \cite{kapranov:atiyah}, Mikhail Kapranov proves the existence of such a structure in (the homotopy category of) the derived category of quasi-coherent sheaves over $X$, when $X$ is a complex manifold.
The Lie bracket is there given by the Atiyah class of the tangent complex.
The Lie-algebra $\T_X[-1]$ should encode the geometric structure of the formal neighbourhood of the diagonal $X \to X \times X$.
Moreover, given a $k$-point of $X$, the pullback of the tangent Lie algebra corresponds to a formal moduli problem as Vladimir Hinich and later Jacob Lurie studied in \cite{hinich:dgcoalg} and in \cite{lurie:dagx}.
This formal moduli problem is the formal neighbourhood of the point at hand.
Let us also mention the work of Jonathan Pridham in \cite{pridham:deformation}.

In this article we use the tools of derived algebraic geometry (see \cite{toen:ttt} for a review) and the machinery of $\infty$-categories (as in \cite{lurie:htt}) to define formal stacks over a derived stack and to study the link with Lie algebras over $X$.
This leads to our main theorem.

\begin{thmintro}
Let $X$ be an algebraic derived stack locally of finite presentation over a field $k$ of characteristic zero.
\begin{itemize}
\item There is an $\infty$-category $\dStF_X$ of formal stacks over $X$ and an adjunction
\[
\formal_X \colon \dgLie_X \rightleftarrows \dStF_X \,: \lie_X
\]
\item Let $\tgtlie_X$ denote the image of the formal completion of the diagonal of $X$ by $\lie_X$.
The underlying module of $\tgtlie_X$ is quasi-isomorphic to the tangent complex of $X$ shifted by $-1$ (\autoref{tangent-lie}).
\item The forgetful functor 
\[
\dgRep_X(\tgtlie_X) \to \Qcoh(X)
\]
from representations of $\tgtlie_X$ to the derived category of quasi-coherent sheaves over $X$ admits a colimit-preserving section
\[
\lierep_X \colon \Qcoh(X) \to \dgRep_X(\tgtlie_X)
\]
(\autoref{derived-global}). Note that $\lierep_X(\T_X[-1])$ is nothing else than the adjoint representation of $\tgtlie_X$.
\end{itemize}
\end{thmintro}
This result specializes to the case of a smooth algebraic variety $X$.
It ensures $\T_X[-1]$ have a weak Lie structure in the derived category of complexes of quasi-coherent sheaves over $X$.
Another consequence is a weak action of $\T_X[-1]$ over any complex of quasi-coherent sheaves over $X$, in the sense of \cite{kapranov:atiyah}.
Let us also emphasize that the adjunction of the first item is usually not an equivalence.
It is when the base $X$ is affine and noetherian.

In the first part of this article, we build an adjunction between formal stacks and dg-Lie algebras when the base is a commutative differential graded algebra $A$ over a field of characteristic zero. 
In the second part, we define some notion of formal stacks over any derived stack $X$.
Gluing the adjunction of part 1, we obtain an adjunction between formal stacks over $X$ and quasi-coherent Lie algebras over $X$.
We then prove the existence of the Lie structure $\tgtlie_X$ on $\T_X[-1]$.
The last pages deal with the action of $\tgtlie_X$ on any quasi-coherent sheaf over $X$, and compare this action with the Atiyah class.

The author is grateful to Bertrand Toën, Damien Calaque and to Marco Robalo for the many interesting discussions we had about this article.
He would also like to thank Mathieu Anel and Dimitri Ara.

\paragraph{A bit of conventions.} Throughout this article $k$ will be a field of characteristic zero. Let us also fix two universes $ \mathbb U \in \mathbb V$. Every dg-module, algebra or so will be assumed $\mathbb U$-small.
We will use the toolbox about $\infty$-categories from \cite{lurie:htt}. We will borrow a bunch of notations from \loccit: let $\inftyCatu U$ denote the $(\infty,1)$-category of $\mathbb U$-small $(\infty,1)$-categories ; let $\PresLeftu U$ denote the $(\infty,1)$-category of $\mathbb U$-presentable (hence $\mathbb V$-small) $(\infty,1)$-categories with left adjoint functors as morphisms.
Every time a category is presentable, it will implicitly mean $\mathbb U$-presentable.
Given $A\in \cdga_k$, we will use the following notations
\begin{itemize}
\item The $(\infty,1)$-category $\dgMod_A$ of (unbounded) dg-modules over $A$ ;
\item The $(\infty,1)$-category $\cdgaunbounded_A$ of (unbounded) commutative dg-algebras over $A$ ;
\item The $(\infty,1)$-category $\cdga_A$ of commutative dg-algebras over $A$ cohomologically concentrated in non positive degree ;
\item The $(\infty,1)$-category $\dgAlg_A$ of (neither bounded nor commutative) dg-algebras over $A$ ;
\item The $(\infty,1)$-category $\dgLie_A$ of (unbounded) dg-Lie algebras over $A$.
\end{itemize}
Each one of those $(\infty,1)$-categories appears as the underlying $(\infty,1)$-category of a model category. We will denote by $\MCdgMod_A$, $\MCcdgaunbounded_A$, $\MCcdga_A$, $\MCdgAlg_A$ and $\MCdgLie_A$ the model categories.
To any $(\infty,1)$-category $\Cc$ with finite coproducts, we will associate $\sifted(\Cc)$, the $(\infty,1)$-category of free sifted colimits in $\Cc$ -- see \cite[5.5.8.8]{lurie:htt}. Recall that it is the category of functors $\Cc\op \to \sSets$ which preserve finite products.

We will also make use of the theory of symmetric monoidal $(\infty,1)$-categories as developed in \cite{lurie:halg}. Let us give a (very) quick review of those objects.

\begin{df}\label{ptfin}
Let $\ptfin$ denote the category of pointed finite sets. For any $n \in \N$, we will denote by $\langle n \rangle$ the set $\{\pt, 1, \dots ,n\}$ pointed at $\pt$.
For any $n$ and $i \leq n$, the Kronecker map $\delta^i \colon \langle n \rangle \to \langle 1 \rangle$ is defined by $\delta^i(j) = 1$ if $j = i$ and $\delta^i(j) = \pt$ otherwise.
\end{df}
\begin{df}(see \cite[2.0.0.7]{lurie:halg})\label{monoidalcats}
Let $\Cc$ be an $(\infty,1)$-category. A symmetric monoidal structure on $\Cc$ is the datum of a coCartesian fibration $p \colon \Cc^{\otimes} \to \ptfin$ such that
\begin{itemize}
\item The fibre category $\Cc^{\otimes}_{\langle 1 \rangle}$ is equivalent to $\Cc$ and
\item For any $n$, the Kronecker maps induce an equivalence $\Cc^{\otimes}_{\langle n \rangle} \to (\Cc^{\otimes}_{\langle 1 \rangle})^n \simeq \Cc^n$.
\end{itemize}
where $\Cc^{\otimes}_{\langle n \rangle}$ denote the fibre of $p$ at $\langle n \rangle$.
We will denote by $\monoidalinftyCatu V$ the $(\infty,1)$-category of $\mathbb V$-small symmetric monoidal $(\infty,1)$-categories -- see \cite[2.1.4.13]{lurie:halg}.
\end{df}
Such a coCartesian fibration is classified by a functor $\phi \colon \ptfin \to \inftyCatu V$ -- see \cite[3.3.2.2]{lurie:htt} -- such that $\phi(\langle n \rangle) \simeq \Cc^n$.
The tensor product on $\Cc$ is induced by the map of pointed finite sets $\mu \colon \langle 2 \rangle \to \langle 1 \rangle$ mapping both $1$ and $2$ to $1$
\[
\otimes = \phi(\mu) \colon \Cc^2 \to \Cc
\]

\section{Lie algebras and formal stacks over a cdga}
\newcommand{\adjoint}{\operatorname{D}}
In this part we will mimic a construction found in Lurie's \cite{lurie:dagx}

\begin{thm}[Lurie]\label{thm-lurie}
Let $k$ be a field of characteristic zero.
There is an adjunction of $(\infty,1)$-categories:
\[
\coho_k \colon \dgLie_k \rightleftarrows {\left(\quot{\cdgaunbounded_k}{k}\right)}\op \,: \adjoint_k
\]
Whenever $L$ is a dg-Lie algebra:
\begin{enumerate}
\item If $L$ is freely generated by a dg-module $V$ then the algebra $\coho_k(L)$ is equivalent to the trivial square zero extension $k \oplus \dual V[-1]$ -- see \cite[2.2.7]{lurie:dagx}.
\item If $L$ is concentrated in positive degree and every vector space $L^n$ is finite dimensional, then the adjunction morphism $L \to \adjoint_k \coho_k L$ is an equivalence -- see \cite[2.3.5]{lurie:dagx}.
\end{enumerate}
\end{thm}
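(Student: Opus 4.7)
The plan is to construct the Chevalley--Eilenberg cochain functor explicitly, produce its right adjoint via presentability, and then verify the two asserted computations by Koszul duality, closely following \cite{lurie:dagx}.

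\emph{Construction of $\coho_k$ and its adjoint.} For $L \in \dgLie_k$, set $\coho_k(L) = \Sym_k(\dual L [-1])$ endowed with the Chevalley--Eilenberg differential, combining the differential of $\dual L [-1]$ with the dual of the Lie bracket. This is naturally a cdga augmented over $k$ by projection onto $\Sym^0$. I would first verify at the model-categorical level that $\coho_k$ preserves quasi-isomorphisms between cofibrant objects, and then check that, viewed as a functor $\dgLie_k \to (\cdgaunbounded_k/k)\op$, it preserves small colimits — which reduces, via a presentation by generators and relations, to the case of a free Lie algebra, where the computation is handled by item (1). Both $(\infty,1)$-categories being presentable, the adjoint functor theorem of \cite{lurie:htt} produces the right adjoint $\adjoint_k$.

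\emph{Item (1).} For $L = \libre(V)$ the underlying graded algebra of $\coho_k(L)$ is $\Sym_k(\dual L [-1])$, but the Koszul duality between the Lie and the cocommutative (co)operads yields a canonical quasi-isomorphism $\coho_k(\libre(V)) \simeq k \oplus \dual V [-1]$, where the right-hand side carries trivial multiplication beyond the augmentation. This may be established by an explicit computation on a bar-like bicomplex, or equivalently by identifying both sides as corepresenting the same functor on augmented square-zero cdgas.

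\emph{Item (2) and main obstacle.} For $L = \libre(V)$ with $V$ concentrated in positive degree and degree-wise finite dimensional, item (1) combined with the biduality isomorphism (which holds precisely under this finiteness hypothesis) yields the unit equivalence $L \xrightarrow{\sim} \adjoint_k \coho_k L$. For a general $L$ satisfying the stated hypotheses, a cofibrant resolution built by iterated cellular attachment along free Lie algebras on finite-dimensional dg-modules concentrated in positive degree reduces the statement to the free case. The main obstacle, and the reason for the finiteness and positivity hypotheses, is that $\coho_k$ turns colimits of Lie algebras into limits of augmented cdgas, so transferring the equivalence across the cellular filtration requires controlling how these limits interact with the filtered colimits in $\dgLie_k$ and with dualisation. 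The positivity and degree-wise finiteness are precisely what ensure the relevant Koszul spectral sequence converges strongly, allowing the induction to close.
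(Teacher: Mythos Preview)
The paper does not prove this theorem: it is stated as a result of Lurie, with pointers to \cite[2.2.7, 2.3.5]{lurie:dagx} embedded in the items, and the remainder of the section is devoted to \emph{extending} it from the base field $k$ to a general $A \in \cdga_k$. There is therefore no proof in the paper to compare your sketch against; your outline is essentially a summary of Lurie's argument, which is all that is being cited here.

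One point in your outline does deserve correction. You set $\coho_k(L) = \Sym_k(\dual L[-1])$ with the Chevalley--Eilenberg differential. This is not the functor used by Lurie or by the present paper (see \autoref{liecohomology}): the Chevalley--Eilenberg cochains are the \emph{dual of the Chevalley--Eilenberg chains}, i.e.\ $\dual{\homol_k(L)}$ with $\homol_k(L) \cong \Sym_k(L[1])$ as graded modules, hence a \emph{completed} symmetric algebra on $\dual L[-1]$ rather than the polynomial one. For $L$ not degreewise finite-dimensional the two differ, and your formula does not give a functor preserving quasi-isomorphisms on all of $\dgLie_k$, since $L \mapsto \dual L$ is ill-behaved without finiteness hypotheses. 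With the correct definition in place, the rest of your plan---colimit preservation checked on free objects, the adjoint functor theorem for the existence of $\adjoint_k$, the Koszul computation for item~(i), and the cellular reduction under finiteness and positivity for item~(ii)---is indeed the strategy of \cite{lurie:dagx}, and is also how the paper proceeds over a general base in \autoref{chevalleycolim}, \autoref{ext-coho} and \autoref{good-fullyff}.
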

The goal is to extend this result to more general basis, namely a commutative dg-algebra over $k$ concentrated in non positive degree.
The existence of the adjunction and the point (i) will be proved over any basis, the analog of point (ii) will need the base dg-algebra to be noetherian.

Throughout this section, $A$ will be a commutative dg-algebra concentrated in non-positive degree over the base field $k$ (still of characteristic zero).

\subsection{Poincaré-Birkhoff-Witt over a cdga in characteristic zero}
In this first part, we prove the PBW-theorem over a cdga of characteristic $0$. The proof is a simple generalisation of that of Paul M. Cohn over a algebra in characteristic $0$ -- see \cite[theorem 2]{cohn:pbw}.
\begin{thm}\label{pbw}
Let $A$ be a commutative dg-algebra over a field $k$ of characteristic zero.
For any dg-Lie algebra $L$ over $A$, there is a natural isomorphism of $A$-dg-modules
\[
\Sym_A L \to \Envel_A L
\]
\end{thm}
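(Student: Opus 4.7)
The plan is to adapt Cohn's characteristic-zero argument to the dg setting. Since $k$ has characteristic zero, I define the symmetrization map
\[
\phi_A(x_1 \cdots x_n) \;=\; \frac{1}{n!} \sum_{\sigma \in S_n} \epsilon(\sigma)\, x_{\sigma(1)} \cdots x_{\sigma(n)},
\]
where $\epsilon(\sigma)$ denotes the Koszul sign produced by permuting the homogeneous elements $x_i$. This map is manifestly $A$-linear; it commutes with the differentials because both differentials on $\Sym_A L$ and $\Envel_A L$ are derivations extending $d_L$, and symmetrization is natural in $L$.

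Next, I equip $\Envel_A L$ with the ascending filtration $F_n$ by word length in the generators from $L$. The defining relation $xy - (-1)^{|x||y|} yx = [x,y]$ has right-hand side in $F_1$, so modulo lower filtration it becomes graded commutativity. A direct computation then shows that modulo $F_{n-1}$ each summand $\epsilon(\sigma) x_{\sigma(1)} \cdots x_{\sigma(n)}$ equals $x_1 \cdots x_n$, so that $\phi_A(x_1 \cdots x_n) \equiv x_1 \cdots x_n$; surjectivity of $\phi_A$ then follows by induction on the filtration degree.

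The main obstacle is injectivity. Following the standard strategy, I construct a faithful $\Envel_A L$-module structure on $\Sym_A L$ whose evaluation at $1 \in \Sym^0_A L$ is a left inverse to $\phi_A$. Concretely, one builds an $A$-linear action $\rho \colon L \otimes_A \Sym_A L \to \Sym_A L$ by induction on polynomial degree, normalized so that $\rho(x)(1) = x$ and arranged so that the graded commutator $\rho(x)\rho(y) - (-1)^{|x||y|}\rho(y)\rho(x)$ equals $\rho([x,y])$. The induction goes through thanks to the Jacobi identity for $L$ and the invertibility of positive integers; naturality of the construction in $L$ then forces compatibility with differentials. Evaluation at $1$ retracts $\phi_A$, yielding injectivity and, together with the surjectivity above, the desired natural isomorphism of $A$-dg-modules.
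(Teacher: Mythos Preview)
Your construction of the symmetrization map $\phi_A$ and the filtration argument for surjectivity match the paper's proof exactly; the paper additionally frames $\phi_A$ as a map of dg-\emph{coalgebras} (via the primitive-element bialgebra structure on $\Envel_A L$), but this is not used in the surjectivity step.

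For injectivity the two arguments diverge. The paper does not construct a module action; it simply invokes Cohn's explicit, basis-free formula for a section $\Envel_A L \to \Sym_A L$. You instead sketch the classical ``build a representation of $L$ on $\Sym_A L$'' strategy. This is a legitimate alternative in characteristic zero, but your sketch is thin at the one place where care is required: the usual inductive construction of such a $\rho$ (the Birkhoff--Witt straightening lemma) chooses an ordered basis of $L$ over the ground ring and defines $\rho$ on ordered monomials by case analysis, and that device is unavailable here because $L$ is an arbitrary dg-module over the cdga $A$, not assumed free. Your phrase ``the induction goes through thanks to the Jacobi identity and the invertibility of positive integers'' hints at a basis-free, characteristic-zero variant---which does exist---but you have not said what $\rho(x)$ actually does on $\Sym^n_A L$ for $n\geq 1$, nor verified the bracket relation. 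Writing this down carefully essentially reproduces Cohn's section formulas, which is why the paper simply cites him.
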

\begin{proof}
Recall that $\Envel_A L$ can be endowed with a bialgebra structure such that an element of $L$ is primitive in $\Envel_A L$.
The morphism $L \to \Envel_A L$ therefore induces a morphism of dg-bialgebras
$\Tens_A L \to \Envel_A L$ which can be composed with the symmetrization map $\Sym_A L \to \Tens_A L$ given by
\[
\el{x}{n}[\otimes] \mapsto \frac{1}{n!} \sum_\sigma \varepsilon(\sigma,\bar x) x_{\sigma(1)} \otimes \dots \otimes x_{\sigma(n)}
\]
where $\sigma$ varies in the permutation group $\mathfrak{S}_n$ and where $\varepsilon(-,\bar x)$ is a group morphism $\mathfrak S_n \to \{-1,+1\}$ determined by the value on the permutations $(i~j)$
\[
\varepsilon((i~j),\bar x) = (-1)^{|x_i||x_j|}
\]
We finally get a morphism of $A$-dg-coalgebras $\phi \colon \Sym_A L \to \Envel_A L$.
Let us take $n \geq 1$ and let us assume that the image of $\phi$ contains $\Envel_A^{\leq n-1} L$.
The image of a symmetric tensor 
\[
\el{x}{n}[\otimes^s]
\]
by $\phi$ is the class
\[
\left[\frac{1}{n!}\sum_\sigma \varepsilon(\sigma, \bar x) \el{x}[\sigma(1)]{\sigma(n)}[\otimes] \right]
\]
which can be rewritten
\[
\left[
\el{x}{n}[\otimes] + \sum_\alpha \pm \frac{1}{n!} \el{y^\alpha}{n-1}[\otimes]
\right]
\]
where $y^\alpha_i$ is either some of the $x_j$'s or some bracket $[x_j,x_k]$.
This implies that $\Envel_A^{\leq n} L$ is in the image of $\phi$ and we therefore show recursively that $\phi$ is surjective (the filtration of $\Envel_A L$ is exhaustive).

There is moreover a section
\[
\Envel_A L \to \Sym_A L
\]
for which a formula is given in \cite{cohn:pbw} and which concludes the proof.
\end{proof}

\subsection{Algebraic theory of dg-Lie algebras}
Let us consider the adjunction $\libre_A \colon \dgLie_A \rightleftarrows \dgMod_A \noloc \oubli_A$ of $(\infty,1)$-categories.
\begin{df}
Let $\dgModLib_A$ denote the full sub-category of $\dgMod_A$ spanned by the free dg-modules of finite type whose generators are in positive degree.
An object of $\dgModLib_A$ is thus (equivalent to) the dg-module
\[
\bigoplus_{i=1}^n A^{p_i}[-i]
\]
for some $n \geq 1$ and some family $(\el{p}{n})$ of non negative integers.

Let $\dgLieLib_A$ denote the essential image of $\dgModLib_A$ in $\dgLie_A$ by the functor $\libre$.
\end{df}
Let us recall that $\sifted(\Cc)$ stands for the sifted completion of a category $\Cc$ with finite coproducts. It is equivalent to the category of functors $\Cc\op \to \sSets$ mapping finite coproducts to products. If $\Cc$ is pointed, we will denote by $\siftedst(\Cc)$ the full subcategory of $\sifted(\Cc)$ spanned by those functors $f$ which also map suspensions which exist in $\Cc$ to loop spaces:
\[
f\left( 0 \amalg_c 0 \right) \simeq \pt \times_{f(c)} \pt
\]
\begin{prop} \label{dglie-algtheory}
The Yoneda functors
\begin{align*}
&\dgMod_A \to \siftedst(\dgModLib_A) \\
&\dgLie_A \to \siftedst(\dgLieLib_A)
\end{align*}
are equivalences.
\end{prop}
\begin{rmq}\label{dglie-sifted}
The above proposition implies that every dg-Lie algebra is colimit of a \emph{sifted} diagram of objects in $\dgLieLib_A$.
\end{rmq}
\begin{proof}

Let us denote by $\Cc$ the category $\dgMod_A$ and $\Cc_0 = \dgModLib_A$.
We consider the set $S$ of the canonical maps $\colim (\mathrm{Nerve}(0 \to E)) \to E$ in $\sifted(\Cc_0)$, where $E$ varies in $\Cc_0$.
Let us define $\Dd$ as the full subcategory of $\sifted(\Cc_0)$ spanned by $S$-local objects.
By construction, the category $\Dd$ is exactly $\siftedst(\Cc_0)$.

It then follows from \cite[5.5.7.3]{lurie:htt} that $\Dd$ is compactly generated and that any compact object $x$ is obtained as a sifted colimit of objects in
\[
\Cc_0^{\leq n} = \{ A^p[-i],~p \in \N,~ 1 \leq i \leq n \} \subset \Cc_0
\]
for some $n$ (depending on $x$).

We now consider the natural adjunction $g \colon \Dd \rightleftarrows \Cc \noloc f$ where $f$ is the Yoneda embedding followed by the restriction to $\Cc_0$. The functor $f$ is obviously conservative (as we can functorially retrieve the cohomology groups of a dg-module $M$ out of $f(M)$).
Let us hence prove that $g$ is fully faithful. To do so, we fix an object $F \in \Dd$ and study the adjunction map $F \to f g F$.
Since both $f$ and $g$ preserve filtered colimits and since $\Dd$ is compactly generated, we may assume that $F$ is compact. In particular, the functor $F$ is determined by its restriction to $\Cc_0^{\leq n}$ for some $n$.
Moreover the image $f g F$ is also determined by its restriction to $\Cc_0^{\leq n}$ (this follows from the fact that $g F$ is bounded above) for some $n$. We can hence test for any $E \in \Cc_0^{\leq n}$:
\[
F(E) = \colim_{N \to F} \Map_\Cc(E,N) \simeq \Map_\Cc(E,\colim N) = \Map_\Cc(E,g F) = fg F(E)
\]
where $N \in \Cc_0^{\leq n}$. This concludes the case of $\dgMod_A$.

The forgetful functor $\oubli_A \colon \dgLie_A \to \dgMod_A$ is by definition conservative. Moreover, the Poincaré-Birkhoff-Witt \autoref{pbw} implies that $\oubli_A$ is a retract of the composite functor
\[
\mymatrix{
\dgLie_A \ar[r]^-{\Envel_A} & \dgAlg_A \ar[r] & \dgMod_A
}
\]
Since the latter preserves sifted colimits (the functor $\Envel_A$ is a left adjoint and then use \cite[3.2.3.1]{lurie:halg}), so does $\oubli_A$. We deduce using the Barr-Beck theorem (see \cite[6.2.0.6]{lurie:halg}) that $\oubli_A$ is monadic. Every dg-Lie algebra can thus be obtained as a colimit of a simplicial diagram with values in the $(\infty,1)$-category of free dg-Lie algebras (see \cite[6.2.2.12]{lurie:halg}).
We then deduce the result from what precedes.
\end{proof}

\begin{rmq}\label{adjointsandalgtheory}
The equivalence $\dgMod_A \simeq \siftedst(\dgModLib_A)$ is given by the Yoneda embedding. It follows that for any $n$, the shift functor $[n] \colon \dgMod_A \to \dgMod_A$ corresponds to the composition with the left adjoint $[-n]$ to $[n]$
\[
[-n]^* \colon \siftedst(\dgModLib_A) \to \siftedst(\dgModLib_A)
\]
As another example, the forgetful functor $\dgLie_A \to \dgMod_A$ is given by the composition with $\libre_A$ and the following diagram commutes
\[
\mymatrix{
\dgLie_A \ar[d]_{\oubli_A} \ar@{-}[r]^-\simeq & \siftedst(\dgLieLib_A) \ar[d]^{\libre_A^*} \\ \dgMod_A \ar@{-}[r]^-{\simeq} & \siftedst(\dgModLib_A)
}
\]
\end{rmq}

\begin{rmq}\label{algtheorycommutes}
Whenever $A \to B$ is a morphism in $\MCcdga_k$, the following square of $(\infty,1)$-categories commutes:
\[
\mymatrix{
\dgLie_A \ar@{-}[r]^-\sim \ar[d]_{B \otimes_A -} &
\siftedst(\dgLieLib_A)
\ar[d]_{{(B \otimes_A -)}_!} \\
\dgLie_B \ar@{-}[r]^-\sim &
\siftedst(\dgLieLib_B)
}
\]
The following proposition actually proves that this comes from a natural transformation between functors $\cdga_k \to \inftyCat$.
\end{rmq}

\begin{prop} \label{commute-dglie}
There are $(\infty,1)$-categories $\int \dgLie$ and $\int \siftedst(\dgLieLib)$, each endowed with a coCartesian fibration to $\MCcdga_k$, respectively representing the functors $A \mapsto \dgLie_A$ and $A \mapsto \siftedst(\dgLieLib_A)$. There is an equivalence over $\MCcdga_k$:
\[
\mymatrix{
\int \siftedst(\dgLieLib) \ar[rr]^-\sim \ar[rd] && \int \dgLie \ar[dl] \\ & \MCcdga_k &
}
\]
This induces an equivalence of functors $\MCcdga_k \to \PresLeftu U$ which moreover descend to a natural transformation
\[
\mymatrix{\cdga_k \dcell[r][][][\sim][=>][12pt] & \PresLeftu U}
\]
\end{prop}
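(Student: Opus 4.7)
The plan is to construct the two coCartesian fibrations in parallel, exhibit a fibrewise equivalence between them, and then descend the construction along the localisation $\MCcdga_k \to \cdga_k$.

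For the first fibration $\int \dgLie \to \MCcdga_k$, I would use the standard Grothendieck construction: base change $B \otimes_A -$ is a left Quillen functor between $\MCdgLie_A$ and $\MCdgLie_B$, so the strict assignment $A \mapsto \MCdgLie_A$ defines a Quillen presheaf. Applying the relative nerve machinery of \cite{lurie:htt} yields a coCartesian fibration whose fibre over $A$ is $\dgLie_A$ and whose coCartesian transport is derived base change. Since each fibre is presentable and each transport functor is a left adjoint, the classifying functor factors through $\PresLeftu U$.

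For the second fibration $\int \sifted(\dgLieLib) \to \MCcdga_k$, I would exploit the very explicit description of $\dgModLib_A$ as finite direct sums of shifts $A^{p_i}[-i]$. A morphism $A \to B$ of cdgas strictly sends $A^{p_i}[-i]$ to $B^{p_i}[-i]$, making $A \mapsto \dgModLib_A$ a genuine functor from $\MCcdga_k$ to small categories; composing with $\libre$ gives the analogous statement for $\dgLieLib$. Post-composing with $\sifted(-)$, which is covariantly functorial in functors between categories with finite coproducts and sends them to left adjoints between presentable categories, yields a functor valued in $\PresLeftu U$, from which the Grothendieck construction provides the desired coCartesian fibration.

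Next, I would construct a morphism of coCartesian fibrations $\int \sifted(\dgLieLib) \to \int \dgLie$. On each fibre, the inclusion $\dgLieLib_A \hookrightarrow \dgLie_A$ extends by left Kan extension to a functor $\sifted(\dgLieLib_A) \to \dgLie_A$, which is an equivalence by \autoref{dglie-algtheory}. The content of \autoref{algtheorycommutes} is precisely that this family is compatible with base change: one need only check that $B \otimes_A -$ restricts on $\dgLieLib_A$ to the expected functor into $\dgLieLib_B$, which is immediate from the concrete description above. A map of coCartesian fibrations that is a fibrewise equivalence is an equivalence of total spaces, yielding the statement over $\MCcdga_k$.

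To descend to a natural transformation of functors $\cdga_k \to \PresLeftu U$, I would verify that both functors send quasi-isomorphisms of cdgas to equivalences. On the $\dgLie$ side this is standard: for a weak equivalence $A \to B$ (between cofibrant objects), $B \otimes_A -$ is a left Quillen equivalence, hence an equivalence of $(\infty,1)$-categories. The $\sifted(\dgLieLib)$ side then inherits the same property through the equivalence just established. The main obstacle is the bookkeeping required to promote the pointwise equivalences of \autoref{dglie-algtheory} to a genuine morphism of coCartesian fibrations, i.e.~to a map preserving coCartesian edges; once the action of base change on $\dgLieLib$ is strictly identified on both sides, this verification is essentially formal.
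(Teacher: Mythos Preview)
Your proposal is correct and follows essentially the same architecture as the paper's proof: construct the two coCartesian fibrations, build a comparison map that is fibrewise the equivalence of \autoref{dglie-algtheory}, check it preserves coCartesian edges via \autoref{algtheorycommutes}, and descend along the localisation. The one point where the paper is sharper is precisely the ``bookkeeping'' you flag at the end: rather than gluing fibrewise left Kan extensions into a morphism of coCartesian fibrations by hand, the paper constructs the comparison functor $F$ in one stroke as the \emph{relative} left Kan extension of the inclusion $\int \dgLieLib \to \int \dgLie$ along $\int \dgLieLib \to \int \sifted(\dgLieLib)$, invoking \cite[4.3.2.14]{lurie:htt}; this produces $F$ as a map of total spaces over $\MCcdga_k$ directly, and one then checks separately that it is a fibrewise equivalence and preserves coCartesian morphisms. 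A minor further difference is that the paper obtains $\dgLieLib$ as a subfunctor of $\dgLie$ (after building the latter by localising a strict Grothendieck construction along fibrewise quasi-isomorphisms, via \cite[2.4.19]{lurie:dagx}), rather than constructing it independently as you do; either route is fine.
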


\begin{rmq}
This proposition establishes an equivalence of functors $\cdga_k \to \PresLeftu U$ between $A \mapsto \dgLie_A$ and $A \mapsto \siftedst(\dgLieLib_A)$.
\end{rmq}

\begin{proof}
Let us define $\int \MCdgLie$ as the following category.
\begin{itemize}
\item An object is couple $(A,L)$ where $A \in \MCcdga_k$ and $L \in \MCdgLie_A$.
\item A morphism $(A,L) \to (B,L')$ is a morphism $A \to B$ and a morphism of $A$-dg-Lie algebras $L \to L'$
\end{itemize}
It comes with a natural functor $\pi \colon \int \MCdgLie \to \MCcdga_k$. For any morphism $A \to B \in \MCcdga_k$, there is a strict base change functor $- \otimes_A B \colon \MCdgLie_A \to \MCdgLie_B$, left adjoint to the forgetful functor. It follows that $\pi$ is a coCartesian fibration.
Let us call a quasi-isomorphism in $\int \MCdgLie$ any map $(A,L) \to (B,L')$ of which the underlying map $A \to B$ is an identity and the map $L \to L'$ is a quasi-isomorphism.
We define $\int \dgLie$ to be the $(\infty,1)$-categorical localization of $\int \MCdgLie$ along quasi-isomorphisms.
Using \cite[2.4.19]{lurie:dagx}, we get a coCartesian fibration of $(\infty,1)$-categories $p \colon \int \dgLie \to \MCcdga_k$.

This coCartesian fibration $p$ defines a functor $\dgLie \colon \MCcdga_k \to \inftyCatu V$ mapping a cdga $A$ to $\dgLie_A$ and a morphism $A \to B$ to the corresponding (derived) base change functor.
It comes with a subfunctor
\[
\dgLieLib \colon \MCcdga_k \to \inftyCatu U
\]
Let us denote by $\siftedst(\dgLieLib)$ its composite functor with 
\[
\siftedst \colon \inftyCatu U \to \inftyCatu V
\]
Let us denote by $\int \dgLieLib \to \MCcdga_k$ the coCartesian fibration given by the functor $\dgLieLib$ and by $\int \siftedst(\dgLieLib)$ that classified by $\siftedst(\dgLieLib)$.

We get a diagram
\[
\shorthandoff{:;!?}
\xy <6mm,0cm>:
(1,0)*+{\int \siftedst(\dgLieLib)}="0",
(5,-2)*+{\int \dgLieLib}="1",
(-3,-2)*+{\int \dgLie}="2",
(3,-5)*+{\MCcdga_k}="3",
\ar "1";"0" _(0.4){G}
\ar@{-->} "0";"2" _(0.6){F}
\ar "1";"2" ^-{F_0}
\ar "0";"3" |!{"1";"2"}\hole
\ar "1";"3"
\ar "2";"3"
\endxy
\]
The functor $F_0$ has a relative left Kan extension $F$ along $G$ (see \cite[4.3.2.14]{lurie:htt}).
From \autoref{dglie-algtheory} we get that $F$ is a fibrewise equivalence. It now suffices to prove that $F$ preserves coCartesian morphisms. This is a consequence of \autoref{algtheorycommutes}.
We get the announced equivalence of functors
\[
\mymatrix{\MCcdga_k \dcell[r][][][\sim][=>][12pt] & \PresLeftu U}
\]
We now observe that both the involved functors map quasi-isomorphisms of $\MCcdga_k$ to equivalences of categories. It follows that this natural transformation factors through the localisation $\cdga_k$ of $\MCcdga_k$.
\end{proof}

\begin{rmq}\label{explainlocalisation}
In the above proof, we used a general method we will use again later on. Starting with a coCartesian fibration of strict categories $C \to D$, we want to localise it along some classes of weak equivalences $W$ and $W'$. The goal is to get a relevant functor
\[
\Dd = D[W'] \to \inftyCatu V
\]
under the right assumptions.
For any object $d \in D$, assume we are given a set of weak equivalences $W_d$ in the fibre $C_d$ such that for any $d \to d'$, the induced functor $C_d \to C_{d'}$ preserves weak equivalences.
Using \cite[2.4.19]{lurie:dagx}, we localise $C$ along $W = \bigcup W_d$ and get a coCartesian fibration of $(\infty,1)$-categories
\[
\Cc = C[W^{-1}] \to D
\]
This fibration is classified by a functor $F \colon D \to \inftyCatu V$, mapping $d \in D$ to $\Cc_d \simeq C_d[W_d^{-1}]$.
If $D$ has a class of weak equivalence $W'$ and if $F$ maps morphisms in $W'$ to equivalences of $(\infty,1)$-categories, we get the announced functor $\Dd = D[W'] \to \inftyCatu V$
This procedure will be used extensively in the second part of this work.
\end{rmq}

\subsection{Almost finite cellular objects}
Let $A$ be a commutative dg-algebra over $k$.
\begin{df}
Let $M$ be an $A$-dg-module.
\begin{itemize}
\item We will denote by $\Mc(M)$ the mapping cone of the identity of $M$.
\item We will say that $M$ is an almost finite cellular object if there is a diagram
\[
0 \to A^{p_0} = M_0 \to M_1 \to \dots 
\]
whose colimit is $M$ and such that for any $n$, the morphism $M_n \to M_{n+1}$ fits into a cocartesian diagram
\[
\mymatrix{
A^{p_{n+1}}[n] \ar[r] \ar[d] & M_n \ar[d] \\ \Mc(A^{p_{n+1}}[n]) \ar[r] & M_{n+1} \cocart
}
\]
\end{itemize}
\end{df}

\begin{rmq}
We choose here to use an explicit model, so that any almost finite cellular object is cofibrant (see lemma below). This will allow us to compute explicitly the dual of an almost finite cellular object (see the proof of \autoref{good-fullyff}).
The definition above states that a dg-module $M$ is an almost finite cellular object if it is obtained from $0$ by gluing a finite number of cells in each degree (although the total number of cells is not necessarily finite).
\end{rmq}

\begin{lem} Let $\phi \colon M \to N$ be a morphism of $A$-dg-modules.
\begin{itemize}
\item If $M$ is an almost finite cellular object then it is cofibrant.
\item Assume both $M$ and $N$ are almost finite cellular objects.
The morphism $\phi$ is a quasi-isomorphism if and only if for any field $l$ and any morphism $A \to l$ the induced map $\phi_k \colon M \otimes_A l \to N \otimes_A l$ is a quasi-isomorphism.
\end{itemize}
\end{lem}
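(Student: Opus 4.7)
I approach the two assertions separately.

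For the first part, I verify cofibrancy directly from the definition. The initial object $M_0 = A^{p_0}$ is a finite direct sum of copies of the unit, hence cofibrant in $\MCdgMod_A$. Each map $A^{p_{n+1}}[n] \to \Mc(A^{p_{n+1}}[n])$ is a finite coproduct of generating cofibrations in the standard projective model structure, so the transition $M_n \to M_{n+1}$, obtained as a pushout of such a map, is itself a cofibration. Therefore $M = \mathrm{colim}_n M_n$ is a countable transfinite composition of cofibrations starting from a cofibrant object, so $M$ is cofibrant.

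For the ``only if'' direction of the second part, I use Part 1: both $M$ and $N$ are cofibrant. A quasi-isomorphism between cofibrant dg-modules is a homotopy equivalence in $\MCdgMod_A$, and tensoring a homotopy equivalence with any fixed dg-module is still a homotopy equivalence, in particular a quasi-isomorphism.

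For the ``if'' direction, which is the substantial content, I argue by contradiction on the cofiber $C = \mathrm{cofib}(\phi)$, which is again cofibrant as an extension of cofibrant modules. By construction of almost finite cellular objects, the underlying graded module of $M$ and $N$ vanishes in strictly positive cohomological degree (since $A$ does), and consequently so does that of $C$; in particular $H^i(C) = 0$ for $i > 0$. Assume for contradiction that $H^\ast(C) \neq 0$, and let $i_0 \leq 0$ be the largest integer with $H^{i_0}(C) \neq 0$. A cell count, using that only finitely many cells of $M$ and $N$ contribute to cohomological degrees $\geq i_0$, shows that $H^{i_0}(C)$ is a finitely generated $H^0(A)$-module. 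Because $i_0$ is the top degree, a truncation argument -- equivalently, the Künneth-type spectral sequence for $C \otimes_A^L l$ collapses at the top -- yields the identification
\[
H^{i_0}(C \otimes_A^L l) \simeq H^{i_0}(C) \otimes_{H^0(A)} l
\]
for any $A$-algebra $l$. Applying Nakayama's lemma to the finitely generated $H^0(A)$-module $H^{i_0}(C)$ produces a maximal ideal $\mathfrak m \subset H^0(A)$ with $H^{i_0}(C) \otimes_{H^0(A)} H^0(A)/\mathfrak m \neq 0$. Taking $l = H^0(A)/\mathfrak m$ with the composition $A \to H^0(A) \to l$ yields a field-valued point for which $H^{i_0}(C \otimes_A^L l) \neq 0$, contradicting the hypothesis.

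The main obstacle is to extract from the almost finite cellular structure both the finite generation of $H^{i_0}(C)$ over $H^0(A)$ and the clean identification of $H^{i_0}(C \otimes_A^L l)$ with an ordinary tensor product over $H^0(A)$; both rely on the fact that the top cohomology group is controlled by the finitely many cells of $M$ and $N$ in the relevant shells, which sidesteps any Noetherian hypothesis on $A$.
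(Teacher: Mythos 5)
Your proposal is correct, and for the substantial (``if'') direction it takes a genuinely different route from the paper. The paper also passes to the cone $C$ of $\phi$, but then works with the finite stages $C_n$ of the cell filtration: using the vanishing of $\homol^j(C_n\otimes_A l)$ at every field point in the range $-n<j\leq 0$ (via the truncation comparison $C_n^{\geq -n}\simeq C^{\geq -n}$), it deduces that $C_n$ is perfect of Tor-amplitude $[-n-1,-n]$, hence an extension of shifted projectives concentrated in degrees $\leq -n$; letting $n\to\infty$ gives acyclicity of $C$. You instead run a literal Nakayama argument on the top nonvanishing cohomology group $\homol^{i_0}(C)$, using the top-degree identification $\homol^{i_0}(C\otimes_A l)\simeq \homol^{i_0}(C)\otimes_{\homol^0(A)}l$ and a maximal ideal in the support. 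Both arguments avoid any noetherian hypothesis; yours is closer to the classical pseudo-coherence tradition and arguably more elementary, while the paper's trades Nakayama for the characterization of Tor-amplitude by field-valued points. One step in your write-up deserves more care: the finite generation of $\homol^{i_0}(C)$ over $\homol^0(A)$ does not follow from a bare ``cell count'', since each cell contributes an entire graded piece of $A$, which need not be finitely generated over $\homol^0(A)$. The claim is nevertheless true, and is proved by the same right-exactness you invoke for the residue-field identification: base change along $A\to\homol^0(A)$ gives $\homol^{i_0}(C)\simeq\homol^{i_0}\bigl(C\otimes_A\homol^0(A)\bigr)$, where $C\otimes_A\homol^0(A)$ is a bounded-above complex of finite free $\homol^0(A)$-modules acyclic above $i_0$; splitting off projectives degree by degree from the top exhibits $\homol^{i_0}$ as a cokernel of a map of finitely generated projectives (this is the standard fact that a connective almost perfect module has finitely presented top cohomology). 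With that step made explicit, your proof is complete.
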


\begin{rmq}
The second point in the above lemma is an analogue to the usual Nakayama lemma.
\end{rmq}

\begin{proof}
Assume $M$ is an almost finite cellular object. Let us consider a diagram $M \to Q \from P$ where the map $P \to Q$ is a trivial fibration. Each morphism $M_n \to M_{n+1}$ is a cofibration and there thus is a compatible family of lifts $(M_n \to P)$. This gives us a lift $M \to P$. The $A$-dg-module $M$ is cofibrant.

Let now $\phi$ be a morphism $M \to N$ between almost finite cellular objects and that the morphism $\phi_l$ is a quasi-isomorphism for any field $l$ under $A$.
Replacing $M$ with the cone of $\phi$ (which is also an almost finite cellular object) we may assume that $N$ is trivial.
Notice first that an almost finite cellular object is concentrated in non positive degree.
Notice also that for any $n$ the truncation morphism $\alpha^{\geq -n} \colon M_{n+1}^{\geq -n} \to M^{\geq -n}$ is a quasi-isomorphism.
We then have 
\[
0 \simeq \homol^j\left(M \otimes_A l\right) \simeq \homol^j\left(M_n \otimes_A l\right)
\]
whenever $-n < j \leq 0$ and for any $A \to l$. Since $\homol^j(M_n \otimes_A l) \simeq 0$ if $j \leq -n-2$ the $A$-dg-module $M_n$ is perfect and of amplitude $[-n-1,-n]$.
This implies the existence of two projective modules $P$ and $Q$ (ie retracts of some power of $A$) fitting in a cofibre sequence (see \cite{toen:ttt})
\[
P[n] \to M_n \to Q[n+1]
\]
The dg-module $M_n$ is then cohomologically concentrated in degree $]-\infty,-n]$, and so is $M$.
This being true for any $n$ we deduce that $M$ is contractible.
\end{proof}

The next lemma requires the base $A \in \cdga_k$ to be noetherian.
Recall that $A$ is noetherian if $\homol^0(A)$ is noetherian and if $\homol^{-n}(A)$ is trivial when $n$ is big enough and of finite type over $\homol^0(A)$ for any $n$. Note that since $A \in \cdga_k$, we always have $\homol^n(A) = 0$ for $n > 0$.
\begin{lem}\label{afp-cotangent}
Assume $A$ is noetherian.
If $B$ is an object of $\quot{\MCcdga_A}{A}$ such that:
\begin{itemize}
\item The $\homol^0(A)$-algebra $\homol^0(B)$ is finitely presented,
\item For any $n \geq 1$ the $\homol^0(B)$-module $\homol^{-n}(B)$ is of finite type,
\end{itemize}
then the $A$-dg-module $\Lcot_{B/A} \otimes_B A$ is an almost finite cellular object.
\end{lem}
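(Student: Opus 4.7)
The plan is to construct a cofibrant resolution $B' \xrightarrow{\sim} B$ in $\MCcdga_A$ built from $A$ by iterated cell attachments of the form $\Sym_A(A^{q_n}[n]) \to \Sym_A(\Mc(A^{q_n}[n]))$ with each $q_n$ finite, then transfer the resulting cell structure to $\Lcot_{B/A} \otimes_B A$ via the standard behavior of the cotangent complex under pushouts of cdgas.

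I would build $B'_0 \to B'_1 \to \cdots$ in $\MCcdga_A$ together with compatible maps $f_n \colon B'_n \to B$ by induction. Set $B'_0 = \Sym_A(A^{q_0})$, where the $q_0$ generators are lifts to $B^0$ of a finite system of $\homol^0(A)$-algebra generators of $\homol^0(B)$, available by the finite presentation hypothesis; then $\homol^0(f_0)$ is surjective. Given $B'_n$ such that $\homol^{-i}(f_n)$ is an isomorphism for $i < n$ and surjective for $i = n$, define $B'_{n+1}$ as the pushout
\[
\mymatrix{
\Sym_A(A^{q_{n+1}}[n]) \ar[r] \ar[d] & B'_n \ar[d] \\
\Sym_A(\Mc(A^{q_{n+1}}[n])) \ar[r] & B'_{n+1} \cocart
}
\]
whose attaching map is a finite family of $q_{n+1}$ elements in $B'_n{}^{-n}$ chosen both to generate $\ker(\homol^{-n}(f_n))$ (via nonzero attaching classes, yielding boundaries in $B'_{n+1}$) and to surject onto $\homol^{-n-1}(B)/\operatorname{im}\homol^{-n-1}(f_n)$ (via zero attaching classes, yielding cocycles in degree $-n-1$). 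The map $f_{n+1}$ extends $f_n$ using null-homotopies in $B$ of the attaching classes.

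The main obstacle will be to guarantee that $q_{n+1}$ can always be taken finite. This rests on maintaining two invariants along the construction: (a) $\homol^0(B'_n)$ is a noetherian $\homol^0(A)$-algebra (finitely presented, hence noetherian by Hilbert's basis theorem; this is preserved by the cell attachments because each such attachment adjoins finitely many polynomial generators and imposes finitely many relations); and (b) each $\homol^{-m}(B'_n)$ is finitely generated over $\homol^0(B'_n)$, by induction on the cell structure and using finite generation of $\homol^{-m}(A)$ over $\homol^0(A)$ from the noetherian hypothesis on $A$. Together with the hypothesis that $\homol^{-n-1}(B)$ is finitely generated over $\homol^0(B)$, hence over $\homol^0(B'_n)$ via $f_n$, these invariants ensure both $\ker(\homol^{-n}(f_n))$ and $\homol^{-n-1}(B)/\operatorname{im}\homol^{-n-1}(f_n)$ are finitely generated.

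Taking $B' = \operatorname{colim}_n B'_n$ yields a cofibrant model of $B$ over $A$, so $\Lcot_{B/A} \otimes_B A \simeq \Lcot_{B'/A} \otimes_{B'} A$, computed along the composite augmentation $B' \to B \to A$. Using $\Lcot_{\Sym_A(V)/A} \otimes_{\Sym_A(V)} A \simeq V$ and the pushout compatibility of the cotangent complex applied to each cell attachment, one obtains cocartesian squares of $A$-dg-modules
\[
\mymatrix{
A^{q_{n+1}}[n] \ar[r] \ar[d] & \Lcot_{B'_n/A} \otimes_{B'_n} A \ar[d] \\
\Mc(A^{q_{n+1}}[n]) \ar[r] & \Lcot_{B'_{n+1}/A} \otimes_{B'_{n+1}} A \cocart
}
\]
starting from $\Lcot_{B'_0/A} \otimes_{B'_0} A \simeq A^{q_0}$. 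Passing to the colimit over $n$ exhibits $\Lcot_{B/A} \otimes_B A$ as an almost finite cellular object with $p_n = q_n$.
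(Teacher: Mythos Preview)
Your proposal is correct and follows essentially the same approach as the paper: build an explicit cell decomposition of $B$ (up to quasi-isomorphism) as an augmented $A$-cdga by inductively attaching finitely many polynomial generators in each degree, using the noetherian hypotheses to control the size of the kernels at each stage, and then deduce the cell structure on $\Lcot_{B/A} \otimes_B A$ from colimit-preservation of the functor $(A \to B \to A) \mapsto \Lcot_{B/A} \otimes_B A$. Your tracking of the invariants (a) and (b) makes explicit what the paper compresses into the single line ``The $\homol^0(B)$-module $\homol^{-n}(B_n)$ is of finite type and because $\homol^0(B)$ is noetherian, the kernel $K_n$ of $\theta$ is also of finite type.''
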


\begin{rmq}
The lemma above is closely related to \cite[8.4.3.18]{lurie:halg}.
\end{rmq}

\begin{proof}
Because the functor $(A \to B \to A) \mapsto \Lcot_{B/A} \otimes_B A$ preserves colimits, it suffices to prove that $B$ is an almost finite cellular object in $\quot{\MCcdga_A}{A}$. This means we have to build a diagram
\[
B_0 \to B_1 \to \dots
\]
whose colimit is equivalent to $B$ and such that for any $n \geq 1$ the morphism $B_{n-1} \to B_n$ fits into a cocartesian diagram
\[
\mymatrix{
A[\el{R^{n-1}}{q}]^{dR_i^{n-1} = 0} \ar[r] \ar[d]_{R_i \mapsto dU_i} & B_{n-1} \ar[d] \\
A[\el{U^n}{q}, \el{X^n}{p}]^{dX^n_j = 0} \ar[r] & B_n \cocart
}
\]
where $R_i^{n-1}$ is a variable in degree $-(n-1)$ and $X_j^n$ and $U_i^n$ are variables in degree $-n$.

We build such a diagram recursively.
Let 
\[
\homol^0(B) \cong \quot{\homol^0(A)[\el{X^0}{p_0}]}{(\el{R^0}{q_0})}
\]
be a presentation of $\homol^0(B)$ as a $\homol^0(A)$-algebra.
Let $B_0$ be $A[\el{X^0}{p_0}]$ equipped with a morphism $\phi_0 \colon B_0 \to B$ given by a choice of coset representatives of $\el{X^0}{p_0}$ in $B$.
The induced morphism $\homol^0(B_0) \to \homol^0(B)$ is surjective and its kernel is of finite type (as a $\homol^0(A)$-module).
\\ Let $n \geq 1$. Assume $\phi_{n-1} \colon B_{n-1} \to B$ has been defined and satisfies the properties:
\begin{itemize}
\item If $n = 1$ then the induced morphism of $\homol^0(A)$-modules $\homol^0(B_0) \to \homol^{0}(B)$ is surjective and its kernel $K_0$ is a $\homol^0(A)$-module of finite type.
\item If $n \geq 2$, then the morphism $\phi_{n-1}$ induces isomorphisms $\homol^{-i}(B_{n-1}) \to \homol^{-i}(B)$ of $\homol^0(A)$-modules if $i = 0$ and of $\homol^0(B)$-modules for $1 \leq i \leq n-2$.
\item If $n \geq 2$ then the induced morphism of $\homol^0(B)$-modules $\homol^{-n+1}(B_{n-1}) \to \homol^{-n+1}(B)$ is surjective and its kernel $K_{n-1}$ is a $\homol^0(B)$-module of finite type.
\end{itemize}
Let $n \geq 1$. Let $\el{X^{n}}{p}$ be generators of $\homol^{-n}(B)$ as a $\homol^0(B)$-module and $\el{R^{n-1}}{q}$ be generators of $K_{n-1}$.
Let $B_n$ be the pushout:
\[
\mymatrix{
A[\el{R^{n-1}}{q}]^{dR_i^{n-1} = 0} \ar[r] \ar[d]_{R_i \mapsto dU_i} & B_{n-1} \ar[d] \\
A[\el{U^n}{q}, \el{X^n}{p}]^{dX^n_k = 0} \ar[r] & B_n \cocart
}
\]
Let $\el{r^{n-1}}{q}$ be the images of $\el{R^{n-1}}{q}$ (respectively) by the composite morphism
\[
A[\el{R^{n-1}}{q}]^{dR_i^{n-1} = 0} \to B_{n-1} \to B
\]
There exist $\el{u^n}{q} \in B$ such that $d u^n_i = r^{n-1}_i$ for all $i$.
Those $\el{u^n}{q}$ together with a choice of coset representatives of $\el{X^n}{p}$ in $B$ induce a morphism
\[
A[\el{U^n}{q}, \el{X^n}{p}]^{dX^n_k = 0} \to B
\]
which induces a morphism $\phi_n \colon B_n \to B$.
\begin{description}

\item If $n=1$ then a quick computation proves the isomorphism of $\homol^0(A)$-modules
\[
\homol^0(B_1) \cong \quot{\homol^0(B_0)}{(\el{R^0}{q})} \cong \homol^0(B)
\]
\item If $n \geq 2$ then the truncated morphism $B_n^{\geq 2-n} \to^\sim \B_{n-1}^{\geq 2-n}$ is a quasi-isomorphism and
the induced morphisms $\homol^{-i}(B_n) \to^\simeq \homol^{-i}(B)$ are thus isomorphisms of $\homol^0(B)$-modules for $i \leq n-2$.
We then get the isomorphism of $\homol^0(B)$-modules
\[
\homol^{-n+1}(B_n) \cong \quot{\homol^{-n+1}(B_{n-1})}{(\el{R^{n-1}}{q})} \cong \homol^{-n+1}(B)
\]
\end{description}
The natural morphism $\theta \colon \homol^{-n}(B_n) \to \homol^{-n}(B)$ is surjective.
The $\homol^0(B)$-module $\homol^{-n}(B_n)$ is of finite type and because $\homol^0(B)$ is noetherian, the kernel $K_n$ of $\theta$ is also of finite type.
The recursivity is proven and it now follows that the morphism $\colim_n B_n \to B$ is a quasi-isomorphism.
\end{proof}

\begin{df} Let $L$ be a dg-Lie algebra over $A$.
\begin{itemize}
\item We will say that $L$ is very good if there exists a finite sequence
\[
0 = L_0 \to \el{L}{n}[\to] = L
\]
such that each morphism $L_i \to L_{i+1}$ fits into a cocartesian square
\[
\mymatrix{
\libre(A[-p_i]) \ar[r] \ar[d] & L_i \ar[d] \\ \libre(\Mc(A[-p_i])) \ar[r] & L_{i+1} \cocart
}
\]
where $p_i \geq 2$. 
\item We will say that $L$ is good if it is quasi-isomorphic to a very good dg-Lie algebra.
\item We will say that $L$ is almost finite if it is cofibrant and if its underlying
\emph{graded} module is isomorphic to
\[
\bigoplus_{i\geq 1} A^{n_i} [-i]
\]
\end{itemize}
\end{df}

\begin{rmq}
The notions of almost finite dg-Lie algebras and of almost finite cellular objects are closely related. We will see in the proof of \autoref{good-fullyff} that the dual $\dual L$ of an almost finite dg-Lie algebra is an almost finite cellular dg-module.
\end{rmq}

\begin{lem}
\begin{itemize}
The following assertions are true.
\item Any very good dg-Lie algebra is almost finite.
\item The underlying dg-module of a cofibrant dg-Lie algebra is cofibrant.
\end{itemize}
\end{lem}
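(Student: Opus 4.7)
Both assertions will be obtained by analysing cell attachments in $\MCdgLie_A$ through the Poincaré-Birkhoff-Witt theorem \ref{pbw}, which identifies $\Envel_A L \simeq \Sym_A L$ as graded $A$-modules for every dg-Lie algebra $L$.

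For the first assertion, I argue by induction on the length $n$ of the filtration $0 = L_0 \to L_1 \to \cdots \to L_n = L$ exhibiting $L$ as very good. The base case $n = 0$ is immediate. For the inductive step, I consider the pushout
\[
L_{i+1} \;\simeq\; L_i \sqcup_{\libre_A(A[-p_i])} \libre_A(\Mc(A[-p_i]))
\]
with $p_i \geq 2$. Since $\Envel_A$ is a left adjoint, it commutes with this pushout and yields
\[
\Envel_A L_{i+1} \;\simeq\; \Envel_A L_i \otimes_{\Tens_A(A[-p_i])} \Tens_A(\Mc(A[-p_i])).
\]
The inductive hypothesis on $L_i$, together with PBW applied to the three dg-Lie algebras $L_i$, $\libre_A(A[-p_i])$ and $\libre_A(\Mc(A[-p_i]))$, ensures each factor on the right-hand side has a free underlying graded $A$-module of finite rank in each degree, concentrated in non-positive degrees. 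A direct computation shows the amalgamated tensor product retains this property. Reapplying PBW to the left-hand side, $\Sym_A L_{i+1}$ is then free of finite rank in each non-positive degree; recovering $L_{i+1}$ as the primitive part of the cocommutative Hopf-algebra $\Envel_A L_{i+1}$ (this is where characteristic zero enters essentially) transfers the finiteness to $L_{i+1}$ itself. The condition $p_i \geq 2$ guarantees the new generators contribute only in degrees $\leq -1$, so $L_{i+1}$ is almost finite. Cofibrancy is preserved throughout since each step is a pushout along a generating cofibration of $\MCdgLie_A$.

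For the second assertion, observe that any cofibrant dg-Lie algebra is a retract of a transfinite composition of pushouts along generating cofibrations of the form $\libre_A(i)$, with $i$ a generating cofibration of $\MCdgMod_A$. Since retracts and filtered colimits of cofibrant dg-modules are again cofibrant, and since $\oubli_A$ preserves filtered colimits (as established in the proof of \ref{dglie-algtheory}), it suffices to show that a single pushout $L \to L \sqcup_{\libre_A V} \libre_A W$ of this form induces a cofibration of underlying dg-modules whenever $L$ has cofibrant underlying dg-module. Repeating the PBW analysis on enveloping algebras, the underlying graded $A$-module of the pushout splits as $\oubli_A(L) \oplus C$ with $C$ a free graded $A$-module, and the differential makes this splitting exhibit $\oubli_A(L) \to \oubli_A(L) \sqcup_{\libre_A V} \libre_A W$ as a cell attachment in dg-modules.

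The main obstacle in both steps is the PBW-based extraction: in (a) recovering $L_{i+1}$ as primitives inside its enveloping algebra in a manner compatible with the graded decomposition, and in (b) identifying an explicit free complement in the underlying dg-module of the pushout. Both rely on the symmetrisation section $\Sym_A L \to \Envel_A L$ of characteristic zero, already used in the proof of \ref{pbw}.
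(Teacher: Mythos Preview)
Your approach diverges substantially from the paper's, and in part (a) it carries a genuine gap.

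\textbf{On the first item.} First, a sign slip: an almost finite dg-Lie algebra has underlying graded module $\bigoplus_{i\geq 1} A^{n_i}[-i]$, which sits in \emph{positive} cohomological degrees (the generator of $A[-i]$ is in degree $i$), not in degrees $\leq -1$. More seriously, your extraction step is incomplete. Knowing that $\Envel_A L_{i+1}\simeq\Sym_A L_{i+1}$ is free of finite rank in each degree and then isolating $L_{i+1}$ as the weight-one summand (equivalently, the primitives of the Hopf algebra $\Envel_A L_{i+1}$) only tells you that each graded piece of $L_{i+1}$ is a direct \emph{summand} of a finite free $A$-module, hence finitely generated projective---but not free. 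Over an arbitrary cdga $A$ this is a real obstruction, and the definition of ``almost finite'' demands freeness. The paper avoids this entirely: it notes that $\libre_A(A[-p])$ is the base change $A\otimes_k\libre_k(k[-p])$ of an almost finite $k$-dg-Lie algebra, and then argues directly that the pushout of an almost finite $L$ along a generating cofibration remains almost finite. The computation is thus effectively carried out over the field $k$, where finite-dimensionality in each degree settles the matter.

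\textbf{On the second item.} Your cell-by-cell analysis could in principle be pushed through, but the paper's argument is much shorter and sidesteps any explicit identification of complements. It runs as follows: by PBW the dg-module $L$ is a retract of $\Envel_A L$; the functor $\Envel_A\colon\MCdgLie_A\to\MCdgAlg_A$ is left Quillen, hence preserves cofibrant objects; and by a result of Schwede--Shipley the forgetful functor $\MCdgAlg_A\to\MCdgMod_A$ also preserves cofibrant objects. Thus if $L$ is cofibrant as a dg-Lie algebra, $\Envel_A L$ is cofibrant as a dg-module, and its retract $L$ is cofibrant too. No transfinite induction and no graded splitting of a pushout are needed---just one retraction and two preservation statements.
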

\begin{proof}
Any free dg-Lie algebra generated by some $A[-p]$ with $p \geq 2$ is almost finite (it is actually obtained by base change from an almost finite dg-Lie algebra on $k$). Considering a pushout diagram
\[
\mymatrix{
\libre(A[-p]) \ar[r] \ar[d] & L \ar[d] \\ \libre(\Mc(A[-p])) \ar[r] & L' \cocart
}
\]
Whenever $L$ is almost finite, so is $L'$.
This proves the first item.

Let now $L$ be a dg-Lie algebra over $A$. There is a morphism of dg-modules $L \to \Envel_A L$.
The Poincaré-Birkhoff-Witt theorem states that the dg-module $\Envel_A L$ is isomorphic to $\Sym_A L$.
There is therefore a retract $\Envel_A L \to L$ of the universal morphism $L \to \Envel_A L$.
The functor $\Envel_A \colon \MCdgLie_A \to \MCdgAlg_A$ preserves cofibrant objects and using a result of \cite{schwedeshipley:monoidal}, so does the forgetful functor $\MCdgAlg_A \to \MCdgMod_A$. We therefore deduce that if $L$ is cofibrant in $\MCdgLie_A$ it is also cofibrant in $\MCdgMod_A$.
\end{proof}

\begin{df}
Let $\dgLieGood_A$ denote the sub-$(\infty,1)$-category of $\dgLie_A$ spanned by good dg-Lie algebras.
\end{df}

\begin{rmq}
We naturally have an inclusion $\dgLieLib_A \to \dgLieGood_A$.
\end{rmq}

\subsection{Homology and cohomology of dg-Lie algebras}\label{section-liecoho}
The content of this section can be found in \cite{lurie:dagx} when the base is a field.
Proofs are simple avatars of Lurie's on a more general base $A$.
Let then $A$ be a commutative dg-algebra concentrated in non-positive degree over a field $k$ of characteristic zero.
\begin{df}
Let $A[\eta]$ denote the (contractible) commutative $A$-dg-algebra generated by one element $\eta$ of degree -1 such that $\eta^2 = 0$ and $d\eta = 1$.
For any $A$-dg-Lie algebra $L$, the tensor product $A[\eta] \otimes_A L$ is still an $A$-dg-Lie algebra and we can thus define the homological Chevalley-Eilenberg complex of $L$:
\[
\homol_A (L) = \Envel_A\left(A[\eta] \otimes_A L\right) \otimes_{\Envel_A L} A
\]
where $\Envel_A \colon \MCdgLie_A \to \MCdgAlg_A$ is the functor sending a Lie algebra to its enveloping algebra.
This construction defines a strict functor:
\[
\homol_A \colon \MCdgLie_A \to \comma{A}{\MCdgMod_A}
\]
\end{df}
\begin{rmq}\label{rmq-coalg}
The complex $\homol_A(L)$ is isomorphic \emph{as a graded module} to $\Sym_A(L[1])$, the symmetric algebra built on $L[1]$.
The differentials do not coincide though. The one on $\homol_A(L)$ is given on homogenous objects by the following formula:
\begin{align*}
d(\el{\eta.x}{n}[\otimes]) = \sum_{i<j} (-1)^{T_{ij}} \eta.[x_i,x_j] \otimes \eta&.x_1 \otimes \dots \otimes \widehat{\eta.x_i} \otimes \dots \otimes \widehat{\eta.x_j} \otimes \dots \otimes \eta.x_n &\\
-& \sum_i (-1)^{S_i} \eta.x_1 \otimes \dots \otimes \eta.d(x_i) \otimes \dots \otimes \eta.x_n
\end{align*}
where $\eta.x$ denotes the point in $L[1]$ corresponding to $x \in L$.
\begin{align*}
&S_i = i-1+|x_1|+ \dots + |x_{i-1}|\\
&T_{ij} = (|x_i|-1)S_i + (|x_j|-1)S_j + (|x_i|-1)(|x_j|-1)
\end{align*}
The coalgebra structure on $\Sym_A(L[1])$ is compatible with this differential and the isomorphism above induces a coalgebra structure on $\homol_A(L)$ given for $x \in L$ homogenous by:
\[
\Delta(\eta.x) = \eta.x \otimes 1 + 1 \otimes \eta.x
\]
\end{rmq}

\begin{prop}\label{homol-inftyfunctor}
The functor $\homol_A$ preserves quasi-isomorphisms.
It induces a functor between the corresponding $(\infty,1)$-categories, which we will denote the same way:
\[
\homol_A \colon \dgLie_A \to \comma{A}{\dgMod_A}
\]
\end{prop}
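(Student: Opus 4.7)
The plan is to reduce preservation of quasi-isomorphisms to a standard fact about symmetric powers in characteristic zero, applied to the filtration on $\homol_A(L)$ by word length. The final $(\infty,1)$-functor will be obtained by the usual derived-functor procedure applied to the strict functor between model categories.

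The starting point is the isomorphism of graded $A$-modules $\homol_A(L)\cong \Sym_A(L[1])$ provided by \autoref{rmq-coalg}. I would equip $\homol_A(L)$ with the increasing filtration
\[
F^p\homol_A(L) \coloneqq \Sym_A^{\leq p}(L[1]).
\]
Inspecting the explicit formula for the Chevalley--Eilenberg differential in \autoref{rmq-coalg}, the bracket term strictly lowers the symmetric degree by one while the internal-differential term preserves it. Hence $F^{\bullet}$ is respected, and the associated graded is the plain symmetric algebra $\bigoplus_n\Sym_A^n(L[1])$ endowed only with the Leibniz extension of the differential of $L[1]$.

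Next, suppose $f\colon L\to L'$ is a quasi-isomorphism between cofibrant dg-Lie algebras. By the preceding lemma, the underlying dg-modules of $L$ and $L'$ are cofibrant over $A$. In characteristic zero, $\Sym_A^n$ is a direct summand of $\otimes_A^n$ via the symmetrization idempotent, so it preserves quasi-isomorphisms between cofibrant dg-modules. Consequently $\Sym_A^n(f[1])$ is a quasi-isomorphism for every $n$. An induction on $p$, using the cofibre sequences $F^{p-1}\hookrightarrow F^p\twoheadrightarrow \Sym_A^p(L[1])$ and the five-lemma on cohomology, then shows that $F^p(f)$ is a quasi-isomorphism for every $p$. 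Since $\homol_A(L)=\colim_p F^p\homol_A(L)$ in $\dgMod_A$ and filtered colimits of dg-modules preserve quasi-isomorphisms, $\homol_A(f)$ is a quasi-isomorphism. The strict functor $\homol_A$ therefore preserves weak equivalences between cofibrant objects; composing with a cofibrant replacement in $\MCdgLie_A$ produces the desired $(\infty,1)$-functor $\dgLie_A\to\comma{A}{\dgMod_A}$.

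The main obstacle I expect is the symmetric-power step: both ingredients are indispensable. The characteristic-zero hypothesis is needed to realise $\Sym_A^n$ as a retract of $\otimes_A^n$, and the cofibrancy of the underlying dg-module (which is exactly what the preceding lemma guarantees for cofibrant dg-Lie algebras) is needed so that $\otimes_A^n$ preserves quasi-isomorphisms over a general cdga $A$. Absent either, the induction on the filtration degrees collapses, so the whole argument is organised around these two inputs.
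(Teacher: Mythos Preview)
Your argument is essentially the paper's: filter $\homol_A(L)$ by symmetric word length, identify the associated graded with $\bigoplus_n \Sym_A^n(L[1])$, and in characteristic zero realise $\Sym_A^n$ as a retract of $\otimes_A^n$ to conclude. The one difference is that the paper asserts preservation of \emph{all} quasi-isomorphisms and simply declares $L[1]^{\otimes n}\to L'[1]^{\otimes n}$ to be a quasi-isomorphism, whereas you restrict to cofibrant dg-Lie algebras (invoking the earlier lemma that their underlying dg-modules are cofibrant) and then derive via cofibrant replacement. Over a general base cdga your extra care is warranted---the underived tensor power need not preserve quasi-isomorphisms without some flatness---so your version is if anything more rigorous, at the cost of proving the nominally weaker statement; for the purpose of obtaining the $(\infty,1)$-functor the two come to the same thing.
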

\begin{proof}
Let $L \to L'$ be a quasi-isomorphism of $A$-dg-Lie algebras.
Both $\homol_A(L)$ and $\homol_A(L')$ are endowed with a natural filtration denoted $\homol_A^{\leq n}(L)$ (resp $L'$) induced by the canonical filtration of $\Sym_A(L[1])$.
Because quasi-isomorphisms are stable by filtered colimits, it is enough to prove that each morphism $\homol_A^{\leq n}(L) \to \homol_A^{\leq n}(L')$ is a quasi-isomorphism.
The case $n = 0$ is trivial. Let us assume $\homol_A^{\leq n-1}(L) \to \homol_A^{\leq n-1}(L')$ to be a quasi-isomorphism.
There are short exact sequences:
\[
\mymatrix{
0 \ar[r] & \homol_A^{\leq n-1}(L) \ar[r] \ar[d] & \homol_A^{\leq n}(L) \ar[r] \ar[d] & \Sym^n_A(L[1]) \ar[r] \ar[d]^\theta & 0 \\
0 \ar[r] & \homol_A^{\leq n-1}(L') \ar[r] & \homol_A^{\leq n}(L') \ar[r] & \Sym^n_A(L'[1]) \ar[r] & 0 \\
}
\]
The base dg-algebra $A$ is of characteristic zero and the morphism $\theta$ is thus a retract of the quasi-isomorphism $L[1]^{\otimes n} \to L'[1]^{\otimes n}$ (where the tensor product is taken over $A$).
\end{proof}

\begin{prop} \label{commute-homol}
Let $A \to B$ be a morphism in $\cdga_k$. The following square is commutative:
\[
\mymatrix{
\dgLie_A \ar[d]_{B \otimes_A -} \ar[r]^-{\homol_A} &  \comma{A}{\dgMod_A} \ar[d]^{B \otimes_A -} \\
\dgLie_B \ar[r]^-{\homol_B} &  \comma{B}{\dgMod_B}
}
\]
\end{prop}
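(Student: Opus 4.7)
The plan is to produce a strict natural isomorphism at the model-category level and then descend to the $(\infty,1)$-categorical square using \autoref{homol-inftyfunctor}. The underlying principle is that every ingredient in the definition of $\homol_A$ --- the cdga $A[\eta]$, the universal enveloping algebra $\Envel_A$, and the relative tensor product --- commutes with base change along $A \to B$.

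Concretely, for $L$ cofibrant in $\MCdgLie_A$, I would combine three natural isomorphisms. First, $B[\eta] \simeq B \otimes_A A[\eta]$ as $B$-cdgas, which yields
$$B[\eta] \otimes_B (B \otimes_A L) \simeq B \otimes_A (A[\eta] \otimes_A L)$$
as $B$-dg-Lie algebras. Second, since the universal enveloping functor is left adjoint to the forgetful functor $\dgAlg \to \dgLie$, it commutes with base change, giving
$$\Envel_B(B \otimes_A M) \simeq B \otimes_A \Envel_A M$$
for any $A$-dg-Lie algebra $M$, compatibly with the natural map $\Envel_A L \to \Envel_A(A[\eta] \otimes_A L)$. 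Third, for any $A$-algebra $R$ augmented to $A$ and any $R$-module $N$, associativity of the tensor product gives
$$B \otimes_A (N \otimes_R A) \simeq (B \otimes_A N) \otimes_{B \otimes_A R} B.$$
Applying this with $R = \Envel_A L$ and $N = \Envel_A(A[\eta] \otimes_A L)$ and combining with the two previous isomorphisms produces the desired strict natural isomorphism $B \otimes_A \homol_A(L) \simeq \homol_B(B \otimes_A L)$.

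To descend to the $(\infty,1)$-level, I restrict to $L$ cofibrant in $\MCdgLie_A$. By the lemma of the preceding subsection, such an $L$ is cofibrant as an $A$-dg-module, so $B \otimes_A L$ computes the derived base change. By \autoref{rmq-coalg} the underlying graded module of $\homol_A(L)$ is $\Sym_A(L[1])$; in characteristic zero each $\Sym^n_A(L[1])$ is a retract of $(L[1])^{\otimes n}$, hence cofibrant, and the filtration of $\homol_A(L)$ by symmetric degree exhibits it as a cofibrant $A$-dg-module. Therefore $B \otimes_A \homol_A(L)$ also represents the derived base change. Combined with \autoref{homol-inftyfunctor}, the strict isomorphism above becomes an equivalence of $(\infty,1)$-functors, which is precisely the commutativity of the square. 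The most delicate point is this bookkeeping of cofibrancy, ensuring each strict tensor product computes the correct homotopy object; the key inputs are the PBW isomorphism and the characteristic zero hypothesis.
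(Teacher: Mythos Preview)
Your proof is correct and follows the same approach as the paper: the commutativity is unpacked directly from the definition of $\homol_A$, using that $A[\eta]$, $\Envel_A$, and the relative tensor product all commute with base change. The paper's own proof is the single sentence ``This follows directly from the definition''; your version supplies the cofibrancy bookkeeping needed to ensure the strict isomorphism descends to the derived level, which the paper leaves implicit.
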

\begin{proof}
This follows directly from the definition.
\end{proof}

\begin{cor}\label{ext-homol}
Let $L$ be in $\MCdgLie_A$ freely generated by some free dg-module $M$.
The homological Chevalley-Eilenberg complex $\homol_A(L)$ of $L$ is quasi-isomorphic to the pointed dg-module $A \to A \oplus M[1]$.
\end{cor}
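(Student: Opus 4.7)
My plan is to identify the strict definition $\homol_A(L) = \Envel_A(A[\eta] \otimes_A L) \otimes_{\Envel_A L} A$ with the derived tensor product $A \otimes^{L}_{\Envel_A L} A$ and then to evaluate this for the enveloping algebra of a free Lie algebra. The first step will exploit the contractibility of $A[\eta]$: since $A[\eta]\otimes_A L$ is quasi-isomorphic to $0$ as an $A$-dg-Lie algebra, applying $\Envel_A$ produces a quasi-isomorphism of $A$-dg-algebras $\Envel_A(A[\eta]\otimes_A L) \to A$. To upgrade this to a \emph{flat} right $\Envel_A L$-module resolution of $A$, I will invoke Poincaré--Birkhoff--Witt (Theorem~\ref{pbw}) on the semidirect-product structure $A[\eta]\otimes_A L \cong L \ltimes \eta L$, obtaining an isomorphism of graded right $\Envel_A L$-modules
\[
\Envel_A(A[\eta]\otimes_A L) \cong \Sym_A(\eta L)\otimes_A \Envel_A L,
\]
from which $\homol_A(L) \simeq A \otimes^{L}_{\Envel_A L} A$.

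For $L = \libre_A(M)$, PBW further gives $\Envel_A L \cong \Tens_A M$, and the augmentation of the tensor algebra admits the elementary length-one resolution
\[
0 \to M \otimes_A \Tens_A M \to \Tens_A M \to A \to 0
\]
as right $\Tens_A M$-modules, where the left-hand map is $m\otimes t \mapsto m\cdot t$. Tensoring this resolution over $\Tens_A M$ with $A$ collapses it to the two-term complex $M \to A$ with zero differential (the augmentation kills $M$), which is canonically quasi-isomorphic to the pointed $A$-dg-module $A \oplus M[1]$, pointed by the unit of $\homol_A(L)$.

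The technical point I expect to be most delicate is the flatness/resolution claim in the first step: the PBW splitting is only an isomorphism of graded right $\Envel_A L$-modules, and one must track the Chevalley--Eilenberg differential carefully to conclude that $\Envel_A(A[\eta]\otimes_A L)$ genuinely computes a derived tensor product in the $\infty$-categorical sense. Should the direct verification become cumbersome, a safer alternative is to reduce to $A=k$ via base change (Proposition~\ref{commute-homol}): writing $M\cong A\otimes_k M_0$ for a free $k$-dg-module $M_0$, one has $L\simeq A\otimes_k \libre_k(M_0)$, and the statement reduces to the classical computation $\homol_k(\libre_k M_0) \simeq k \oplus M_0[1]$, i.e.\ to $\Tor^{\Tens_k M_0}(k,k)$.
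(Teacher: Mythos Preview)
Your proposal is correct, and in fact your ``safer alternative'' is precisely the paper's own proof: the author reduces to the field case via Proposition~\ref{commute-homol} and then cites the computation in Lurie's Theorem~\ref{thm-lurie}. No further argument is given.

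Your primary approach is a genuinely different, more self-contained route. Identifying $\homol_A(L)$ with $A \otimes^{\mathbb L}_{\Envel_A L} A$ and then computing this $\Tor$ for $\Envel_A(\libre_A M) \cong \Tens_A M$ via the two-term resolution $M \otimes_A \Tens_A M \hookrightarrow \Tens_A M$ is clean and avoids any appeal to Lurie's result over a field; it also works uniformly for free $M$ of arbitrary rank, whereas recovering the homology statement from the \emph{cohomology} statement in Theorem~\ref{thm-lurie}(i) by duality would require some finiteness. The cost, as you correctly flag, is the verification that $\Envel_A(A[\eta]\otimes_A L)$ is genuinely a resolution of $A$ by free right $\Envel_A L$-modules in the dg sense: the PBW splitting for the semidirect product $L \ltimes \eta L$ only controls the associated graded, so one must check that the filtration by powers of $\eta L$ is compatible with the differential and exhaustive. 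This is routine here (the differential on $A[\eta]\otimes_A L$ sends $\eta L$ into $L$, hence respects the descending filtration by $\Envel_A L$-submodules generated by $\Sym^{\geq n}(\eta L)$), but it is exactly the kind of bookkeeping the paper sidesteps by invoking base change.
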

\begin{proof}
This a consequence of the previous proposition and the corresponding result over a field in Lurie's \autoref{thm-lurie}.
\end{proof}

\begin{df}\label{liecohomology}
Let $L$ be an object of $\MCdgLie_A$. We define the cohomological Chevalley-Eilenberg complex of $L$ as the dual of its homological:
\[
\coho_A(L) = \dual{\homol_A(L)} = \Homint_A\left(\homol_A(L), A\right)
\]
It is equipped with a commutative algebra structure (see \autoref{rmq-coalg}). This defines a functor:
\[
\coho_A \colon \dgLie_A \to {\left(\quot{\cdgaunbounded_A}{A}\right)}\op
\]
between $(\infty,1)$-categories.
\end{df}

\begin{rmq}
The Chevalley-Eilenberg cohomology of an object $L$ of $\dgLieLib_A$ is concentrated in non positive degree. It indeed suffices to dualise the quasi-isomorphism from \autoref{ext-homol}.
The following proposition proves the cohomology of a good dg-Lie algebra is also concentrated in non-positive degree.
\end{rmq}

\begin{prop}\label{chevalleycolim}
The functor $\coho_A$ of $(\infty,1)$-categories maps colimit diagrams in $\dgLie_A$ to limit diagrams of $\quot{\cdgaunbounded_A}{A}$.
\end{prop}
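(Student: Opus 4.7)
The plan is to exploit \autoref{dglie-algtheory}, which presents $\dgLie_A \simeq \sifted(\dgLieLib_A)$ as the free sifted cocompletion. A functor out of such a sifted cocompletion preserves all small colimits if and only if it preserves sifted colimits and its restriction to the generating subcategory preserves finite coproducts. Applied to $\coho_A$ viewed as a functor $\dgLie_A \to {\left(\quot{\cdgaunbounded_A}{A}\right)}\op$, this reduces the statement to two verifications: first, that $\coho_A$ sends sifted colimits in $\dgLie_A$ to sifted limits in $\quot{\cdgaunbounded_A}{A}$; second, that the restriction $\coho_A|_{\dgLieLib_A}$ sends finite coproducts to finite products in $\quot{\cdgaunbounded_A}{A}$.

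For the finite coproduct check, I would use \autoref{ext-homol} to identify $\coho_A(\libre M) \simeq A \oplus \dual M[-1]$ as a trivial square-zero extension. Since coproducts in $\dgLieLib_A$ are given by $\libre(M) \sqcup \libre(M') \simeq \libre(M \oplus M')$, one finds $\coho_A(\libre(M \oplus M')) \simeq A \oplus \dual M[-1] \oplus \dual{M'}[-1]$, and an explicit calculation shows that this coincides, as an augmented $A$-cdga, with the fibered product $(A \oplus \dual M[-1]) \times_A (A \oplus \dual{M'}[-1])$ taken in $\quot{\cdgaunbounded_A}{A}$: the product of two trivial square-zero extensions is the trivial square-zero extension on the direct sum of the augmentation ideals. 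The empty case is immediate since $\coho_A(0) \simeq A$ is the terminal object of $\quot{\cdgaunbounded_A}{A}$.

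For the sifted colimit check, I would first prove that $\homol_A \colon \dgLie_A \to \dgMod_A$ preserves sifted colimits. The increasing filtration $\{\homol_A^{\leq n}(L)\}$ already used in the proof of \autoref{homol-inftyfunctor} fits into short exact sequences of dg-modules whose subquotients are $\Sym_A^n(L[1])$. In characteristic zero each $\Sym_A^n$ is a retract of $(-)^{\otimes n}$, and the latter preserves sifted colimits because the diagonal $\mathcal I \to \mathcal I^n$ is cofinal for any sifted $\infty$-category $\mathcal I$. Induction on $n$ then shows each $\homol_A^{\leq n}$ preserves sifted colimits, and passing to the filtered colimit yields the same for $\homol_A$. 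Dualizing sends these colimits to sifted limits in $\dgMod_A$, and these lift to limits in $\quot{\cdgaunbounded_A}{A}$: since sifted indexing categories are connected, the forgetful functor $\quot{\cdgaunbounded_A}{A} \to \dgMod_A$ creates the limits in question. The main subtlety, handled just as in the proof of \autoref{homol-inftyfunctor}, is that the subquotients identify with $\Sym_A^n(L[1])$ as genuine dg-modules and not merely as graded modules.
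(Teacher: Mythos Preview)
Your proposal is correct and follows essentially the same approach as the paper's proof. Both arguments split the verification into sifted colimits and finite coproducts, handle the sifted case via the filtration $\homol_A^{\leq n}$ with subquotients $\Sym_A^n(L[1])$, and reduce the finite coproduct case to the explicit computation on $\dgLieLib_A$; the only cosmetic difference is that you invoke the universal property of $\sifted(\dgLieLib_A)$ to restrict the coproduct check to the generators from the outset, whereas the paper first establishes the sifted case and then uses it to reduce arbitrary coproducts to coproducts in $\dgLieLib_A$.
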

\begin{proof}[sketch of a]
For a complete proof, the author refers to the proof of proposition 2.2.12 in \cite{lurie:dagx}.
We will only transcript here the main arguments.

A commutative $A$-dg-algebra $B$ is the limit of a diagram $B_\alpha$ if and only if the underlying dg-module is the limit of the underlying diagram of dg-modules.
It is thus enough to consider the composite $\infty$-functor $\dgLie_A \to {\left(\quot{\cdgaunbounded_A}{A}\right)}\op \to {\left(\quot{\dgMod_A}{A}\right)}\op$.
This functor is equivalent to $\dual{(\homol_A(-))}$.
It is then enough to prove $\homol_A \colon \dgLie_A \to \comma{A}{\dgMod_A}$ to preserve colimits.

To do so, we will first focus on the case of sifted colimits, which need only to be preserved by the composite functor $\dgLie_A \to \comma{A}{\dgMod_A} \to \dgMod_A$.
This last functor is the (filtered) colimits of the functors $\homol_A^{\leq n}$ as introduced in the proof of \autoref{homol-inftyfunctor}. We now have to prove that $\homol_A^{\leq n} \colon \dgLie_A \to \dgMod_A$ preserves sifted colimits, for any $n$.
There is a fiber sequence
\[
\homol_A^{\leq n-1} \to \homol_A^{\leq n} \to \Sym_A^n((-)[1])
\]
The functor $\Sym_A((-)[1])$ preserves sifted colimits in characteristic zero and an inductive process proves that $\homol_A$ preserves sifted colimits too.

We now have to treat the case of finite coproducts. The initial object is obviously preserved. Let $L = L' \amalg L''$ be a coproduct of dg-Lie algebras.
We proved in \autoref{dglie-sifted} that $L'$ an $L''$ can be written as sifted colimits of objects of $\dgLieLib_A$.
It is thus enough to prove that $\homol_A$ preserve the coproduct $L = L' \amalg L''$ when $L'$ and $L''$ (and thus $L$ too) are in $\dgLieLib_A$.
This corresponds to the following cocartesian diagram
\[
\mymatrix{
A \ar[r] \ar[d] & A \oplus M' [1] \ar[d] \\ A \oplus M''[1] \ar[r] & A \oplus (M' \oplus M'')[1] \cocart
}
\]
where $M'$ and $M''$ are objects of $\dgModLib_A$ generating $L'$ and $L''$ respectively.
\end{proof}

\begin{df}\label{definitionadjoint}
The colimit-preserving functor $\coho_A$ between presentable $(\infty,1)$-categories admits a right adjoint which we will denote by $\adjoint_A$.
\end{df}

\begin{lem} \label{commute-coho-good}
Let $B \to A$ be a morphism of $\cdga_k$. The following diagram of $(\infty,1)$-categories commutes: 
\[
\mymatrix{
\dgLieGood_B \ar[r]^-{\coho_B} \ar[d]_{A \otimes_B -} & {\left(\quot{\cdga_B}{B}\right)}\op \ar[d]^{A \otimes_B -} \\
\dgLieGood_A \ar[r]^-{\coho_A} & {\left(\quot{\cdga_A}{A}\right)}\op
}
\]
\end{lem}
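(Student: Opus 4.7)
The strategy is to reduce the commutativity of the square to two ingredients: the base-change compatibility of the homological Chevalley-Eilenberg complex established in \autoref{commute-homol}, and the identity $A \otimes_B \Homint_B(M, B) \simeq \Homint_B(M, A)$, which holds precisely when $M$ is a perfect $B$-module. By definition of \emph{good}, we may replace $L$ by a quasi-isomorphic very good dg-Lie algebra.

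\textbf{The central technical step}, which I would prove first, is that for a very good $L \in \dgLie_B$ the module $\homol_B(L)$ is perfect over $B$. I argue by induction on the length $n$ of the sequence $0 = L_0 \to \dots \to L_n = L$ exhibiting the very good structure. The base case is immediate since $\homol_B(0) \simeq B$. For the inductive step, I apply $\homol_B$ to the defining pushout involving $\libre_B(B[-p])$ and $\libre_B(\Mc(B[-p]))$. The proof of \autoref{chevalleycolim} established that $\homol_B$ preserves finite coproducts and sifted colimits, hence all small colimits in $\comma{B}{\dgMod_B}$; in particular it preserves this pushout. By \autoref{ext-homol}, the Chevalley-Eilenberg complexes of the two free Lie algebras appearing are $B \oplus B[-p+1]$ and $B$ respectively (the latter since $\Mc(B[-p])$ is contractible), both of which are perfect. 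Pushouts of perfect modules being perfect, the induction closes.

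\textbf{The square then commutes} via the chain of natural equivalences
\begin{align*}
\coho_A(A \otimes_B L)
&= \Homint_A\bigl(\homol_A(A \otimes_B L), A\bigr) \simeq \Homint_A\bigl(A \otimes_B \homol_B(L), A\bigr) \\
&\simeq \Homint_B\bigl(\homol_B(L), A\bigr) \simeq A \otimes_B \Homint_B\bigl(\homol_B(L), B\bigr) = A \otimes_B \coho_B(L),
\end{align*}
where the second equivalence is \autoref{commute-homol}, the third is the tensor-hom adjunction for $B \to A$, and the fourth uses the perfectness of $\homol_B(L)$ from the previous step. Naturality in $L$ and compatibility with the cdga and augmentation structures follow because $A \otimes_B -$ is symmetric monoidal and the coalgebra structure on $\homol$ is natural, so each equivalence refines from $B$-dg-modules to augmented $A$-cdgas.

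\textbf{The main obstacle} is the perfectness step above: without it, the duality $\coho = \Homint(\homol, -)$ does not commute with base change, and this is precisely the reason one restricts to good dg-Lie algebras rather than attempting the statement for arbitrary $L$.
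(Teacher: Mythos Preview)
Your proof is correct and follows essentially the same strategy as the paper's: both arguments use the natural comparison map coming from \autoref{commute-homol}, verify it on free dg-Lie algebras $\libre(B[-p])$ via \autoref{ext-homol}, and then propagate to all good dg-Lie algebras by the inductive (pushout) structure defining \emph{very good}, invoking the colimit-preservation of \autoref{chevalleycolim}. The only organizational difference is that you isolate the key fact as ``$\homol_B(L)$ is perfect'' and then deduce the base-change identity for $\Homint_B(-,B)$ in one stroke, whereas the paper runs the induction directly on the comparison map $\phi_L \colon A \otimes_B \coho_B(L) \to \Homint_B(\homol_B(L),A)$ and uses that tensor product commutes with finite limits in $\dgMod_A$ for the inductive step; these are two phrasings of the same computation.
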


\begin{proof}
The \autoref{commute-homol} gives birth to a natural transformation $A \otimes_B \coho_B(-) \to \coho_A( A \otimes_B -)$. Let $L \in \dgLieGood_B$. The $B$-dg-module $\coho_A(A \otimes_B L)$ is equivalent to
\[
\coho_A\left(A \otimes_B L\right) \simeq \Homint_B(\homol_B(L),A)
\]
We thus study the natural morphism 
\[
\phi_L \colon A \otimes_B \coho_B(L) \to \Homint_B(\homol_B(L),A)
\]

Let us consider the case of the free dg-Lie algebra $L = \libre(B[-p])$ with $p \geq 1$.
If $B$ is the base field $k$ then $\homol_k(L)$ is perfect (\autoref{ext-homol}) and the morphism $\phi_L$ is an equivalence. If $B$ is any $k$-dg-algebra then $L$ is equivalent to $B \otimes_k \libre(k[-p])$ and we conclude using \autoref{commute-homol} that $\phi_L$ is an equivalence.

To prove the general case of any good dg-Lie algebra it is now enough to ensure that if $L_1 \from L_0 \to L_2$ is a diagram of good dg-Lie algebras such that $\phi_{L_1}$, $\phi_{L_0}$ and $\phi_{L_2}$ are equivalences then so is $\phi_L$, with $L = L_1 \amalg_{L_0} L_2$.
Using \autoref{chevalleycolim}, we see it can be tested in $\dgMod_A$ in which tensor product and fibre product commute.
\end{proof}

\begin{cor}\label{ext-coho}
The composite functor $\coho_A \libre_A \colon \dgMod_A \to \dgLie_A \to {\left(\quot{\cdgaunbounded_A}{A}\right)}\op$ is equivalent to the functor $M \to A \oplus \dual M [-1]$.
\end{cor}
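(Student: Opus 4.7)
The plan is to compare the two functors on the generating sub-category $\dgModLib_A$ via \autoref{dglie-algtheory}, then to deduce the equivalence on $\dgModLib_A$ itself by base change from the field case \autoref{thm-lurie}(i) via \autoref{commute-coho-good}.

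First I would check that both $\coho_A \libre_A$ and $G \colon M \mapsto A \oplus \dual M[-1]$ are colimit-preserving functors $\dgMod_A \to {\left(\quot{\cdgaunbounded_A}{A}\right)}\op$. For the first, $\libre_A$ is a left adjoint, and $\coho_A$ sends colimits in $\dgLie_A$ to limits in $\quot{\cdgaunbounded_A}{A}$ by \autoref{chevalleycolim}, equivalently to colimits in the opposite category. For $G$, the dualization $\dual{(-)}$ sends colimits to limits, the shift is an equivalence, and the trivial square-zero extension $N \mapsto A \oplus N$ from $\dgMod_A$ to $\quot{\cdgaunbounded_A}{A}$ preserves limits (the forgetful functor to $\dgMod_A$ is a conservative right adjoint).

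By \autoref{dglie-algtheory}, every object of $\dgMod_A$ is a sifted colimit of objects of $\dgModLib_A$, so two colimit-preserving functors out of $\dgMod_A$ are equivalent as soon as their restrictions to $\dgModLib_A$ agree. For $M \in \dgModLib_A$, the explicit description of $\dgModLib_A$ shows $M \simeq A \otimes_k M_k$ for some $M_k \in \dgModLib_k$. Since $\libre$ commutes with base change, $\libre_A M \simeq A \otimes_k \libre_k M_k$, which belongs to $\dgLieLib_A \subseteq \dgLieGood_A$. Applying \autoref{commute-coho-good} and \autoref{thm-lurie}(i) then gives
\[
\coho_A(\libre_A M) \simeq A \otimes_k \coho_k(\libre_k M_k) \simeq A \otimes_k (k \oplus \dual{M_k}[-1]) \simeq A \oplus \dual M[-1]
\]
as augmented cdgas, where the last step uses perfectness of $M_k$ over $k$ to commute base change with duality.

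The main subtlety I anticipate is the colimit-preservation check for $G$ at the level of augmented commutative algebras rather than of mere dg-modules. A tempting shortcut would be to dualize \autoref{ext-homol} directly, but that would require upgrading its equivalence of pointed dg-modules to one of co-augmented coalgebras; the base change argument above sidesteps this entirely by reducing to the already-established field case.
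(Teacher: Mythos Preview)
Your proof is correct and follows the same strategy as the paper's: both functors preserve colimits, $\dgMod_A$ is generated under sifted colimits by $\dgModLib_A$ (\autoref{dglie-algtheory}), and the two functors agree on $\dgModLib_A$. The paper's proof leaves this last point implicit; your reduction to the field case via \autoref{commute-coho-good} and \autoref{thm-lurie}(i) is exactly the right way to fill it in, and your observation that this sidesteps the coalgebra-upgrade issue in dualising \autoref{ext-homol} is well taken.
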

\begin{proof}
The $(\infty,1)$-category $\dgMod_A$ is generated under (sifted) colimits by $\dgModLib_A$. The functors at hand coincide on $\dgModLib_A$ and both preserve colimits.
\end{proof}

\begin{lem} \label{good-fullyff}
Assume $A$ is noetherian.
Let $L$ be a good dg-Lie algebra over $A$. Recall $\adjoint_A$ from \autoref{definitionadjoint}.
The adjunction unit $L \to \adjoint_A \coho_A L$ is a quasi-isomorphism.
\end{lem}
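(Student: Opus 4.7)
My plan is to reduce the lemma, via a Nakayama-type argument, to the field case covered by Lurie's \autoref{thm-lurie}. First I would replace $L$ by a very good representative: since both $\coho_A$ and $\adjoint_A$ preserve quasi-isomorphisms and since, by definition, a good dg-Lie algebra is quasi-isomorphic to a very good one, this replacement is harmless. From now on $L$ is very good, hence almost finite.

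Next I would pin down $\adjoint_A \coho_A L$ at the level of underlying $A$-dg-modules. The very good cell structure of $L$, combined with \autoref{ext-coho} for the atoms $\libre(A[-p])$ and \autoref{chevalleycolim} for the pushout steps, exhibits $\coho_A L$ as obtained from $A$ by a finite sequence of iterated square-zero extensions, each adding finitely many generators in non-positive cohomological degree. In particular $\coho_A L$ satisfies the finite-type hypotheses of \autoref{afp-cotangent} -- this is where noetherianity of $A$ enters -- and that lemma provides an almost finite cellular model for $\Lcot_{\coho_A L/A} \otimes_{\coho_A L} A$. I would then identify the underlying dg-module of $\adjoint_A \coho_A L$ with the $A$-linear dual of this cotangent module, shifted by $-1$, with the Lie bracket coming from the cdga structure of $\coho_A L$. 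In particular, the underlying dg-module of $\adjoint_A \coho_A L$ sits in the same class as $\dual{L}$.

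Since $L$ is almost finite, $\dual L$ is almost finite cellular, and we have just put $\adjoint_A \coho_A L$ in the same class. The Nakayama-type part of the almost finite cellular lemma then reduces the claim to checking that $\eta_L \otimes_A l$ is a quasi-isomorphism for every field map $A \to l$, where $\eta_L \colon L \to \adjoint_A \coho_A L$ is the adjunction unit. The base-change identity \autoref{commute-coho-good} yields $\coho_A L \otimes_A l \simeq \coho_l(L \otimes_A l)$; combined with the compatibility of the cotangent complex with base change, this gives $(\adjoint_A \coho_A L) \otimes_A l \simeq \adjoint_l \coho_l(L \otimes_A l)$ with the base-changed unit matching the unit over $l$. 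Since $L \otimes_A l$ is very good over the field $l$, hence concentrated in positive degree with finite-dimensional strata, Lurie's \autoref{thm-lurie} provides the required quasi-isomorphism and concludes.

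The main obstacle is the second paragraph: pinning down $\adjoint_A \coho_A L$ concretely enough to verify simultaneously its almost finite cellular structure and its compatibility with $-\otimes_A l$. Because $\adjoint_A$ is defined purely as an abstract right adjoint, neither its finiteness properties nor its base-change behavior is automatic; both rely on the explicit cotangent-complex description of $\adjoint_A$ on the image of $\coho_A$, which must be justified using the very good cell structure together with \autoref{afp-cotangent}.
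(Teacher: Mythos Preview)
Your overall strategy matches the paper's: identify the underlying module of $\adjoint_A \coho_A L$ via the cotangent complex, use \autoref{afp-cotangent}, reduce via Nakayama and \autoref{commute-coho-good} to the field case, and invoke Lurie. But there is a genuine gap in how you apply Nakayama, and it is exactly the obstacle you flag at the end.

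Your claim that the underlying module of $\adjoint_A \coho_A L$ ``sits in the same class as $\dual L$'' is off by a dualization. You correctly compute $\oubli_A\adjoint_A\coho_A L \simeq \dual{\bigl(\Lcot_{\coho_A L/A}\otimes_{\coho_A L} A\bigr)}[-1]$, the \emph{dual} of an almost finite cellular object; this lives in positive degrees, like $L$ itself, whereas $\dual L$ is almost finite cellular and lives in non-positive degrees. So neither $L$ nor $\adjoint_A\coho_A L$ is almost finite cellular, and the Nakayama lemma as stated does not apply to $\eta_L$. Relatedly, your base-change identity $(\adjoint_A\coho_A L)\otimes_A l \simeq \adjoint_l\coho_l(L\otimes_A l)$ would require the $A$-linear dual to commute with $-\otimes_A l$, which is precisely what is not automatic. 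The paper's fix is to dualize first: the unit $\eta_L$ is dual to a map
\[
f \colon \Lcot_{\coho_A L/A}\otimes_{\coho_A L} A \longrightarrow \dual L[-1],
\]
and it is to $f$ that Nakayama applies, since both source and target are genuinely almost finite cellular. The cotangent complex does base-change along $A \to l$, so $f\otimes_A l$ becomes the analogous map over $l$, an equivalence by \autoref{thm-lurie}. Once $f$ is an equivalence, $\eta_L$ is identified with the biduality map $L \to \dual{\dual L}$, and a separate short argument---using that a noetherian $A$ is cohomologically bounded, so only finitely many graded pieces of $\bigoplus_{i\geq 1}A^{n_i}[-i]$ contribute in each degree---shows this is a quasi-isomorphism. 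This last biduality step is what absorbs the failure of duals to commute with base change, and it is missing from your outline.
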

\begin{proof}
Let us first proves that the morphism at hand is equivalent, as a morphism of dg-modules, to the natural morphism $L \to \dual{\dual L}$.
The composite functor $\oubli_A \adjoint_A \colon {\left(\quot{\cdgaunbounded_A}{A} \right)}\op \to \dgLie_A \to \dgMod_A$ is right adjoint to $\coho_A \libre_A$. Using \autoref{ext-coho}, we see that $\oubli_A \adjoint_A$ is right adjoint to the composite functor
\[
\mymatrix{
\dgMod_A \ar[r]^-{\dual{(-)}[-1]} & \dgMod_A\op \ar[r]^-{A \oplus -} & {\left(\quot{\cdgaunbounded_A}{A} \right)}\op
}
\]
The functor $\dual{(-)}[-1]$ admits a right adjoint, namely $\dual{(-[1])}$ while $A \oplus -$ is left adjoint (beware of the op's) to 
\[
(A \to B \to A) \mapsto A \otimes_B \Lcot_{B/A}
\]
It follows that $\oubli_A \adjoint_A$ is equivalent to the functor :
\[
(A \to B \to A) \mapsto \dual{\left( A \otimes_B \Lcot_{B/A}[1] \right)}
\]
The adjunction unit $L \to \adjoint_A \coho_A L$ is thus dual to a map
\[
f \colon \Lcot_{\coho_A L/A} \otimes_{\coho_A L} A \to \dual L [-1]
\]
As soon as $L$ is good and $A$ noetherian, the complex $\coho_A L$ satisfies the finiteness conditions of \autoref{afp-cotangent}.
We can safely assume that $L$ is very good. Because $L$ is almost finite (as a dg-Lie algebra), there is a family $(n_i)$ of integers and an isomorphism of \emph{graded} modules
\[
L = \bigoplus_{i \geq 1} A^{n_i}[-i]
\]
The dual $\dual L$ of $L$ can be computed naively (since the underlying dg-module of $L$ is cofibrant). The dual $\dual L$ is then isomorphic to $\prod_{i \geq 1} A^{n_i}[i]$ with an extra differential. Because $A$ in concentrated in non positive degree, only a finite number of terms contribute to a fixed degree in this product. The dual $\dual L$ is hence equivalent to $\bigoplus_{i \geq 1} A^{n_i}[i]$ (with the extra differential).
The dg-module $\dual L [-1]$ is hence an almost finite cellular object.
Both domain and codomain of the morphism $f$ are thus almost finite cellular $A$-dg-modules. It is then enough to consider $f \otimes_A l$ for any field $l$ and any morphism $A \to l$
\[
f \otimes_A l \colon \left(\Lcot_{\coho_A(L)/A} \otimes A \right) \otimes_A l \to \dual{\left( L \otimes l \right)} [-1]
\]
The \autoref{commute-coho-good} gives us the equivalence $\coho_A(L) \otimes_A l \simeq \coho_l(L \otimes_A l)$ and the morphism $f \otimes_A l$ is thus equivalent to the morphism
\[
\Lcot_{\coho_l(L \otimes_A l)/l} \otimes l \to \dual{(L \otimes l)}[-1]
\]
This case is equivalent to Lurie's result \ref{thm-lurie} (ii).
We get that $f$ is an equivalence and that the adjunction morphism $L \to \adjoint_A \coho_A L$ is equivalent to the canonical map $L \to \dual{\dual L}$.

We now prove that $L \to \dual{\dual L}$ is an equivalence.
We saw above the equivalence $\dual L \simeq \bigoplus_{i \geq 1} A^{n_i}[i]$.
The natural morphism $L \to \dual{\dual L}$ therefore corresponds to the morphism
\[
\bigoplus_{i \geq 1} A^{n_i}[-i] \to \prod_{i \geq 1} A^{n_i}[-i]
\]
Since $A$ is noetherian, it is cohomologically bounded. Once more, only a finite number of terms actually contribute to a fixed degree and the map above is a quasi-isomorphism.
\end{proof}

\begin{rmq}
The base dg-algebra $A$ needs to be cohomologically bounded for that lemma to be true.
Taking $\homol^0(A)$ noetherian and $L=\libre(A^2[-1])$, the adjunction morphism is equivalent to
\[
L \to \dual{\dual L}
\]
which is not a quasi-isomorphism if $A$ is not cohomologically bounded.
\end{rmq}

\subsection{Formal stack over a dg-algebra}
Throughout this section $A$ will denote an object of $\cdga_k$.
\begin{df}
Let $\dgExt_A$ denote the full sub-category of $\quot{\cdga_A}{A}$ spanned be the trivial square zero extensions $A \oplus M$, where $M$ is a free $A$-dg-module of finite type concentrated in non positive degree.
\end{df}
\begin{df}
A formal stack over $A$ is a functor $\dgExt_A \to \sSets$ preserving finite products and loop spaces. We will denote by $\dStF_A$ the $(\infty,1)$-category of such formal stacks:
\[
\dStF_A = \siftedst(\dgExt_A\op)
\]
\end{df}

\begin{rmq}
The $(\infty,1)$-category $\dStF_A$ is $\mathbb U$-presentable.
\end{rmq}

Let $A \in \cdga_k$. For any formal stack $X$ over $A$ we can consider the functor
\[
\app{{\left(\dgModLib_A\right)}\op}{\sSets}{M}{X(A \oplus \dual M)}
\]
From $X$ being a formal stack, the functor above belong to $\siftedst(\dgModLib_A)$ and is hence (see \autoref{dglie-algtheory}) represented by an $A$-dg-module: the tangent of $X$ at its canonical point.

\begin{df}\label{tangentofformal}
Let $A \in \cdga_k$ and let $S = \Spec A$. The tangent complex of a formal stack $X$ over $A$ at the canonical point $x$ is the $A$-dg-module $\T_{X/S,x}$ representing the product-preserving functor
\[
M \mapsto X(A \oplus \dual M)
\]
\end{df}

\begin{rmq}
We will link this tangent with the usual tangent of derived Artin stacks in \autoref{tangentistangent}.
\end{rmq}

\begin{prop}\label{adjoint-exist}
Let $A$ be in $\cdga_k$ and let $S=\Spec A$.
There is an adjunction 
\[
\formal_A \colon \dgLie_A \rightleftarrows \dStF_A \,: \lie_A
\]
such that
\begin{itemize}
\item The functor $\oubli_A \lie_A \colon \dStF_A \to \dgLie_A \to \dgMod_A$ is equivalent to the functor $X \mapsto \T_{X/S,x}[-1]$ where $\T_{X/S,x}$ is the tangent complex of $X$ over $S$ at the natural point $x$ of $X$.
\item The functor $\lie_A$ is conservative. Its restriction to $\dgExt_A \op$ is canonically equivalent to $\adjoint_A$.
\item If moreover $A$ is noetherian then the functors $\lie_A$ and $\formal_A$ are equivalences of $(\infty,1)$-categories.
\end{itemize}
\end{prop}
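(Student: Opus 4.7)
The plan is to build both functors explicitly: $\lie_A$ via sifted-colimit extension, and $\formal_A$ via a mapping-space formula. First I would restrict $\adjoint_A$ (from \autoref{definitionadjoint}) along the inclusion $\dgExt_A\op \hookrightarrow \bigl(\quot{\cdga_A}{A}\bigr)\op$ to obtain a functor $\adjoint_A|_{\dgExt_A\op}\colon \dgExt_A\op \to \dgLie_A$. Since $\dStF_A = \sifted(\dgExt_A\op)$ is the free sifted-cocompletion of $\dgExt_A\op$ and $\dgLie_A$ is sifted-cocomplete (being presentable), this restricted functor extends uniquely to a sifted-colimit-preserving functor $\lie_A\colon \dStF_A \to \dgLie_A$, whose restriction to $\dgExt_A\op$ is by construction canonically $\adjoint_A|_{\dgExt_A\op}$.

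Then I would define $\formal_A$ by the formula $\formal_A(L)(B) := \Map_{\dgLie_A}(\adjoint_A(B), L)$ for $L \in \dgLie_A$ and $B \in \dgExt_A$. The key check is that this is product-preserving in $B$: since the product $(A\oplus M) \times (A\oplus N)$ in $\dgExt_A$ is $A\oplus(M\oplus N)$, this amounts to verifying $\adjoint_A(A\oplus(M\oplus N)) \simeq \adjoint_A(A\oplus M)\amalg\adjoint_A(A\oplus N)$ in $\dgLie_A$. I would prove this by comparing both sides against an arbitrary Lie algebra via the adjunction $\coho_A \dashv \adjoint_A$ and invoking \autoref{ext-coho} together with the additivity of $\libre$ and duality on free modules. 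Once $\formal_A(L) \in \dStF_A$ is established, the adjunction $\formal_A \dashv \lie_A$ follows: on representables $y(B)$, Yoneda gives $\Map_{\dStF_A}(y(B), \formal_A(L)) = \formal_A(L)(B) = \Map_{\dgLie_A}(\lie_A(y(B)), L)$, and one extends to general $X\in\dStF_A$ by writing $X$ as a sifted colimit of representables and using sifted-cocontinuity of $\lie_A$.

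For the stated properties: the equivalence $\oubli\lie_A(X) \simeq \T_{X/S,x}[-1]$ follows from both sides being sifted-cocontinuous functors on $\dStF_A$ that agree on representables $y(A\oplus M)$, where a direct computation (using \autoref{tangentofformal} together with the adjunction) identifies both with the same shift of cotangent-dual data. Conservativity of $\lie_A$ reduces to conservativity of the tangent functor, checked on morphisms between representables. In the noetherian case, \autoref{good-fullyff} provides $\adjoint_A\coho_A \simeq \id$ on good Lie algebras, which combined with \autoref{dglie-algtheory} upgrades the adjunction to an equivalence. The hardest step I expect is the product-preservation of $\formal_A(L)$: coproducts in $\dgLie_A$ are not automatically preserved by the right adjoint $\adjoint_A$, so this requires the explicit description of $\adjoint_A$ on trivial square-zero extensions afforded by \autoref{ext-coho}.
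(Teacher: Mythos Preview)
Your construction gives the adjunction in the wrong direction. The displayed identification
\[
\Map_{\dStF_A}(y(B), \formal_A(L)) \simeq \Map_{\dgLie_A}(\lie_A(y(B)), L)
\]
says precisely that $\lie_A$ is \emph{left} adjoint to your $\formal_A$, whereas the proposition requires $\formal_A \dashv \lie_A$. Your $\lie_A$ does coincide with the paper's (both agree with $\adjoint_A$ on representables and preserve sifted colimits, hence agree on all of $\dStF_A = \sifted(\dgExt_A\op)$), but your $\formal_A$ is a \emph{right} adjoint to it, not the required left adjoint. The paper builds $\formal_A$ the other way around: it restricts $\coho_A$ to $\dgLieLib_A \to \dgExt_A\op$, composes with the Yoneda embedding into $\dStF_A$, and then extends by sifted colimits via $\dgLie_A \simeq \sifted(\dgLieLib_A)$ (\autoref{dglie-algtheory}). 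The right adjoint $\lie_A$ is then precomposition with $\coho_A$, and its sifted-colimit preservation is checked separately.

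The step you single out as hardest is exactly where your approach breaks: to show $\adjoint_A(A\oplus M\oplus N) \simeq \adjoint_A(A\oplus M) \amalg \adjoint_A(A\oplus N)$ you would need to compute maps \emph{out of} $\adjoint_A(B)$, but the adjunction $\coho_A \dashv \adjoint_A$ only controls maps \emph{into} $\adjoint_A(B)$. The argument you sketch would go through if one knew $\adjoint_A(A\oplus M) \simeq \libre_A(\dual M[-1])$, but this is equivalent to the unit $\libre_A(\dual M[-1]) \to \adjoint_A\coho_A\libre_A(\dual M[-1])$ being an equivalence, which is exactly \autoref{good-fullyff} and requires $A$ noetherian. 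The paper's route sidesteps this entirely: since $\coho_A$ is a left adjoint (\autoref{chevalleycolim}), it preserves coproducts, so $\formal_A$ built from $\coho_A$ automatically has the correct variance and the existence of the adjunction needs no noetherian hypothesis.
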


\begin{df}
Let $X$ be a formal stack over $A$. The Lie algebra $\lie_A X$ will be called the tangent Lie algebra of $X$ (over $A$).
\end{df}

\begin{proof}[of \autoref{adjoint-exist}]
Let us prove the first item. The functor $\coho_A$ restricts to a functor
\[
\coho_A \colon \dgLieLib_A \to \dgExt_A\op
\]
which composed with the Yoneda embedding defines a functor $\phi \colon \dgLieLib_A \to \dStF_A$.
This last functor extends by colimits to 
\[
\formal_A \colon \dgLie_A \simeq \siftedst(\dgLieLib_A) \to \dStF_A
\]
Because $\coho_A$ preserves coproducts and suspensions, the functor $\formal_A$ admits a right adjoint $\lie_A$ given by right-composing by $\coho_A$.
The composite functor
\[
\mymatrix{
\dStF_A \ar[r]^-{\lie_A} & \siftedst(\dgLieLib_A) \ar[r] & \siftedst(\dgModLib_A) \simeq \dgMod_A
}
\]
then corresponds to the functor
\[
X \mapsto X(\coho_A(\libre(-))) \simeq X(A \oplus \dual{(-)}[-1])
\]
The use of \autoref{adjointsandalgtheory} proves the first item.
The functor 
\[
\coho_A \colon \left( \dgLieLib_A \right)\op \to \dgExt_A
\]
is essentially surjective. This implies that $\lie_A$ is conservative.
We now focus on the third item. Because $\lie_A$ is conservative, it suffices to prove that the unit $\id \to \lie_A \formal_A$ is an equivalence.
Let us consider $L \in \dgLie_A$ and study the map $L \to \lie_A \formal_A L$.
There exists a sifted diagram $(L_\alpha)$ of objects in $\dgLieLib_A$ such that $L \simeq \colim L_\alpha$ in $\sifted(\dgLieLib_A)$ (and thus also in $\siftedst(\dgLieLib_A) \simeq \dgLie_A$).
This colimit is preserved by both $\formal_A$ and $\lie_A$ and we cna therefore restrict to the case $L \in \dgLieLib_A$.
The morphism $L \to \lie_A \formal_A L$ is then equivalent to the adjunction unit $L \to \adjoint_A \coho_A L$. We conclude with \autoref{good-fullyff} using the noetherian assumption.
\end{proof}

Until the end of this section, we will focus on proving that the definition we give of a formal stack is equivalent to Lurie's definition of a formal moduli problem in \cite{lurie:dagx}, as soon as the base dg-algebra $A$ is noetherian.

\begin{df}
An augmented $A$-dg-algebra $B \in \quot{\cdga_A}{A}$ is called artinian if there is sequence 
\[
B = \el{B}[0]{n}[\to] = A
\]
and for $0 \leq i < n$ an integer $p_i \geq 1$ such that 
\[
B_i \simeq B_{i+1} \times_{A[\varepsilon_{p_i}]} A
\]
where $A[\varepsilon_{p_i}]$ denote the trivial square zero extension $A \oplus A[p_i]$.

We denote by $\dgArt_A$ the full subcategory of $\quot{\cdga_A}{A}$ spanned by the artinian dg-algebras.
\end{df}

\begin{df}
A formal moduli problem over $A$ is a functor
\[
X \colon \dgArt_A \to \sSets
\]
satisfying the conditions:
\begin{enumerate}[label=(F\arabic{*})]
\item For $n \geq 1$ and $B \in \quot{\dgArt_A}{A[\varepsilon_n]}$ the following natural morphism is an equivalence:
\[
X \left( B \timesunder[{A[\varepsilon_n]}] A \right) \to^\sim X(B) \timesunder[{X(A[\varepsilon_n])}] X(A) 
\]
\item The simplicial set $X(A)$ is contractible.
\end{enumerate}
Let $\widetilde{\dStF_A}$ denote the full subcategory of $\presh(\dgArt_A\op)$ spanned by the formal moduli problems.
This is an accessible localization of the presentable $(\infty,1)$-category $\presh(\dgArt_A\op)$ of simplicial presheaves over $\dgArt_A\op$.
\end{df}

\begin{prop}\label{formal-dgext}
Let $A \in \cdga_k$ be noetherian.
The left Kan extension of the inclusion functor $i \colon \dgExt_A \to \dgArt_A$ induces an equivalence of $(\infty,1)$-categories
\[
j \colon \dStF_A \to \widetilde{\dStF_A}
\]
\end{prop}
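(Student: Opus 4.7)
Following the strategy of \cite[\S 2.2]{lurie:dagx}, I aim to exhibit $j$ and the restriction $i^* : \widetilde{\dStF_A} \to \dStF_A$ as quasi-inverse equivalences.

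First I verify that $i^*$ is well-defined: for $Y \in \widetilde{\dStF_A}$, the restriction $Y|_{\dgExt_A}$ must preserve finite products. The empty case is exactly (F2). For binary products, write $N = \bigoplus_j A[n_j]$ and induct on the number of summands, reducing to the case $N = A[n]$. The key identity $A[\varepsilon_n] \simeq A \times_{A[\varepsilon_{n+1}]} A$ in $\cdga_A$ yields the pullback presentation
\[
A \oplus M \oplus A[n] \simeq (A \oplus M) \times_{A[\varepsilon_{n+1}]} A,
\]
in which the map $A \oplus M \to A[\varepsilon_{n+1}]$ factors through the augmentation to $A$. Applying (F1) and (F2), the resulting fibre product splits as $Y(A \oplus M) \times (* \times_{Y(A[\varepsilon_{n+1}])} *) \simeq Y(A \oplus M) \times Y(A \oplus A[n])$.

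Set $j := i_!$, the left Kan extension along the fully faithful functor $i$. The unit $X \to i^* i_! X$ is then automatically an equivalence. It remains to check that $i_! X \in \widetilde{\dStF_A}$: condition (F2) is immediate from $X(A) \simeq *$ (since $A$ is terminal in $\dgExt_A$), while (F1) is established by a recursive computation along the tower $B = B_0 \to \cdots \to B_n = A$ of an artinian $B$, showing $i_! X(B_i) \simeq i_! X(B_{i+1}) \times_{X(A[\varepsilon_{p_i}])} *$. Finally, $i^*$ is conservative, since every $Y \in \widetilde{\dStF_A}$ is determined on $\dgArt_A$ by its values on the $A[\varepsilon_n]$ via the tower decomposition and (F1). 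Conservativity together with the invertible unit forces the counit of $j \dashv i^*$ to be invertible, giving the claimed equivalence.

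The main obstacle is verifying (F1) for $i_! X$: the colimit formula for a left Kan extension is not manifestly compatible with the fibre products appearing in (F1). I would address this by first defining a candidate $\widetilde{\jmath}(X) : \dgArt_A \to \sSets$ directly via the recursive tower formula sketched above, which satisfies (F1) and (F2) by construction; since $\widetilde{\jmath}(X)$ restricts to $X$ on $\dgExt_A$, the universal property of $i_!$ produces a natural map $i_! X \to \widetilde{\jmath}(X)$, which one then shows is an equivalence by combining product-preservation of $X$ on $\dgExt_A$ with a cofinality argument for the relevant comma categories in the colimit defining $i_! X$.
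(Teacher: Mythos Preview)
Your approach differs substantially from the paper's and, as you yourself flag, the crucial step is not carried out. The paper does not attempt to verify (F1) for $i_! X$ directly. Instead it passes through the bridge with dg-Lie algebras: under the noetherian hypothesis, \autoref{adjoint-exist} gives an equivalence $\formal_A \colon \dgLie_A \simeq \dStF_A$, so it suffices to show that the composite $f \colon \dgLie_A \to \dStF_A \to \widetilde{\dStF_A}$ is an equivalence, with right adjoint $g = \lie_A \circ i^*$. The explicit candidate for $f$ is $\adjoint^* \colon L \mapsto \Map_{\dgLie_A}(\adjoint_A(-), L)$, and the point is that this is already a formal moduli problem: \autoref{good-fullyff} (which is exactly where the noetherian hypothesis enters) implies that $\adjoint_A$ carries the pullback $B \times_{A[\varepsilon_n]} A$ to the pushout $\adjoint_A B \amalg_{\adjoint_A A[\varepsilon_n]} \adjoint_A A$, and inductively that the counit $B \to \coho_A \adjoint_A B$ is an equivalence on all of $\dgArt_A$. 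One then checks $\id \to \adjoint^* g$ and $g\,\adjoint^* \to \id$ are equivalences directly, so $g$ is an equivalence, hence $f$, hence $j$.

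Your plan, by contrast, never invokes the noetherian hypothesis, which should already be a warning sign. The step you identify as the obstacle is a genuine one, and the proposed fix has a real problem: the tower presenting an artinian $B$ is neither unique nor functorial in $B$, so your $\widetilde{\jmath}(X)$ is not, on the face of it, a functor on $\dgArt_A$ at all; showing independence of the chosen tower and compatibility with maps $B \to B'$ is essentially as hard as the original problem. The cofinality claim for the comma categories computing $i_! X(B)$ is likewise not routine---there is no terminal object (a general artinian $B$ is not a trivial square-zero extension), and isolating a cofinal subdiagram that interacts well with the pullbacks in (F1) is precisely what the paper's detour through $\adjoint_A$ and \autoref{good-fullyff} accomplishes. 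In short: the parts of your argument that are written out (well-definedness and conservativity of $i^*$, invertibility of the unit for a fully faithful $i$) are correct, but they reduce everything to the one step you do not prove, and that step is where the paper's actual work---and the noetherian hypothesis---lives.
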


\begin{proof}
We will actually prove that the composed functor
\[
f \colon \dgLie_A \to \dStF_A \to \widetilde{\dStF_A}
\]
is an equivalence. The functor $f$ admits a right adjoint $g = \lie_A i^*$.

Given $n \geq 1$ and a diagram $B \to A[\varepsilon_n] \from A$ in $\dgArt_A$, \autoref{good-fullyff} implies that the natural morphism
\[
\adjoint_A(B) \amalg_{\adjoint_A(A[\varepsilon_n])} \adjoint_A(A)\to^\sim\adjoint_A \left( B \timesunder[{A[\varepsilon_n]}] A \right)
\] 
is an equivalence.
For any $B \in \dgArt_A$ the adjunction morphism $\coho_A \adjoint_A B \to B \in {\left(\quot{\cdga_A}{A}\right)}\op$ is then an equivalence. Note that it is actually a map of augmented cdga's $B \to \coho_A \adjoint_A B$.
Given $L \in \dgLie_A$ the functor $\adjoint^* L \colon \dgArt_A \to \sSets$ defined by $\adjoint^*(B) = \Map(\adjoint_A(B),L)$ is a formal moduli problem (we use here the above equivalence).
The natural morphism $\id \to \adjoint^* g$ of $\infty$-functors from $\dStF_A$ to itself is therefore an equivalence.
The same goes for the morphism $g \adjoint^*  \to \id$ of $\infty$-functors from $\dgLie_A$ to itself.
The functor $g$ is an equivalence, so is $f$ and hence so is $j$.
\end{proof}

\section{Tangent Lie algebra}

We now focus on gluing the functors built in the previous section, proving the following statement.
\begin{thm}\label{tangent-lie}
Let $X$ be a derived Artin stack locally of finite presentation.
Then there is a Lie algebra $\tgtlie_X$ over $X$ whose underlying module is equivalent to the shifted tangent complex $\T_X[-1]$ of $X$.

Moreover if $f \colon X \to Y$ is a morphism between algebraic stacks locally of finite presentation then there is a tangent Lie morphism $\tgtlie_X \to f^* \tgtlie_Y$.
More precisely, there is a functor
\[
\comma{X}{\dStArtlfp_k} \to \comma{\tgtlie_X}{\dgLie_X}
\]
sending a map $f \colon X \to Y$ to a morphism $\tgtlie_X \to f^* \tgtlie_Y$. The underlying map of quasi-coherent sheaves is indeed the tangent map (shifted by $-1$).
\end{thm}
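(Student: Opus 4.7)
The plan is to identify $\tgtlie_X$ with $\lie_X(\widehat{X}_\Delta)$, where $\widehat{X}_\Delta$ is the formal completion of the diagonal $\Delta \colon X \to X \times X$ viewed as a formal stack over $X$ through (say) the first projection, and $\lie_X$ is the global version of $\lie_A$ built in the preceding section by gluing along a smooth atlas of $X$.

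First I would check that $\widehat{X}_\Delta$ lies in $\dStF_X$. Locally on a smooth atlas $\Spec A \to X$ (with $A$ noetherian thanks to the lfp hypothesis), its restriction is the formal completion of the diagonal of $\Spec A$, which is tautologically a formal stack over $A$ in the sense of the previous section. Setting $\tgtlie_X := \lie_X(\widehat{X}_\Delta)$ and reducing to a smooth affine chart, \autoref{adjoint-exist} identifies the underlying $A$-module of $\lie_A(\widehat{\Spec A}_\Delta)$ with $\T_{\widehat{\Spec A}_\Delta / \Spec A, x}[-1]$, and a direct computation using \autoref{tangentofformal} identifies the tangent of the formal completion of the diagonal at its canonical point with $\T_A$ itself. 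Descent along the smooth atlas then gives the identification with $\T_X[-1]$.

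For the functorial part, given $f \colon X \to Y$, the commutative square
\[
\mymatrix{
X \ar[r]^-{f} \ar[d]_-{\Delta} & Y \ar[d]^-{\Delta} \\
X \times X \ar[r]^-{f \times f} & Y \times Y
}
\]
induces, on formal completions of diagonals, a morphism of formal stacks $\widehat{X}_\Delta \to f^\ast \widehat{Y}_\Delta$ in $\dStF_X$, where $f^\ast$ denotes the pullback of formal stacks along $f$. This is functorial in $f$ because pullbacks of formal stacks and formal completions of diagonals each are, which upgrades the construction to a functor $\comma{X}{\dStArtlfp_k} \to \comma{\widehat{X}_\Delta}{\dStF_X}$. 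Post-composing with $\lie_X$ and using its compatibility with base change (globally, by descent from the fibrewise statement of \autoref{commute-coho-good}) yields
\[
\tgtlie_X = \lie_X(\widehat{X}_\Delta) \to \lie_X(f^\ast \widehat{Y}_\Delta) \simeq f^\ast \lie_Y(\widehat{Y}_\Delta) = f^\ast \tgtlie_Y,
\]
whose underlying map of quasi-coherent sheaves is, as one checks on the atlas, the shifted tangent map.

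The hardest step will be the globalization itself: ensuring that the fibrewise constructions assemble coherently at the $(\infty,1)$-categorical level rather than merely pointwise. Concretely, this requires organizing the local data into cocartesian fibrations over a smooth atlas (in the spirit of \autoref{commute-dglie}), so that $\widehat{X}_\Delta$, $\tgtlie_X$, and the functoriality in $f$ all live in the same descent-theoretic framework; the local compatibilities themselves are already available from the previous section.
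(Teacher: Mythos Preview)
Your overall strategy---defining $\tgtlie_X := \lie_X$ applied to the formal completion of the diagonal, then computing locally---matches the paper. But two points in the execution are genuinely wrong, and both stem from the same misunderstanding.

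First, the local computation. For a point $u \colon S = \Spec A \to X$, the restriction $u^*\for{(X\times X)}$ is \emph{not} the formal completion of the diagonal of $\Spec A$. Unwinding the definition (see \autoref{pointed-basechange} and the remark following it), it is the functor
\[
B \longmapsto \Map_{S/-/X}(\Spec B,\, X\times X)\ \simeq\ \Map_{S/}(\Spec B,\, X),
\]
i.e.\ the formal neighbourhood of the $A$-point \emph{inside $X$}, not inside $\Spec A \times \Spec A$. Its tangent at the canonical point is $u^*\T_X$, not $\T_A$. (Your claim that $A$ is noetherian ``thanks to the lfp hypothesis'' is also false, but fortunately noetherianity is never used: one only needs item (i) of \autoref{adjoint-exist}, which holds unconditionally.)

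Second, and more seriously, your appeal to base-change compatibility of $\lie$ is not valid. You invoke \autoref{commute-coho-good}, but that lemma concerns $\coho$, i.e.\ the \emph{left} adjoint $\formal$; it says nothing about the right adjoint $\lie$. In fact the paper explicitly warns, right after \autoref{dstfoverstack}, that $\lie_X$ \emph{may not} commute with base change. What makes the argument go through here is not a general property of $\lie$ but the specific fact that $X$ (and later $Z$) has a \emph{global tangent complex}: this is precisely what forces the comparison map $u^*\lie_X(\for{Y}) \to \lie_A(u^*\for{Y})$ to be an equivalence for $Y = X\times X$, because both sides then represent the same derivation functor. The paper's functoriality argument exploits this by sending $f\colon X\to Z$ to the pointed stack $X\times Z$ (via the graph) rather than to $f^*\widehat{Z}_\Delta$; using $X\times Z \simeq X\times_Z(Z\times Z)$ and the global tangent complex of $Z$, one computes $\lie_X(\for{(X\times Z)}) \simeq f^*\tgtlie_Z$ directly, bypassing any general base-change statement for $\lie$. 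Your route can be salvaged, but only by replacing the appeal to \autoref{commute-coho-good} with this argument.
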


\subsection{Formal stacks and Lie algebras over a derived Artin stack}

Let $A \to B$ be a morphism in $\cdga_k$. There is an \emph{exact} scalar extension functor $B \otimes_A -\, \colon \dgExt_A \to \dgExt_B$. Composing by this functor induces a restriction functor $\left(B \otimes_A -\right)^* \colon \dStF_B \to \dStF_A$.
Moreover, the composite functor $\dgExt_A\op \to \dgExt_B\op \to \dStF_B$ admits a left Kan extension (see \cite[5.5.8.15]{lurie:htt}) $\left(B \otimes_A -\right)_! \colon \dStF_A \to \dStF_B$.
We actually get an adjunction
\[
\left(B \otimes_A -\right)_! \colon 
\mymatrix{
\dStF_A \ar@<2pt>[r] & \dStF_B \ar@<2pt>[l]
}
\noloc \left(B \otimes_A -\right)^*
\]
We will now prove the stronger result (recall that $\PresLeftu U$ denotes the category of presentable categories with left adjoints as morphisms between them):

\begin{prop}
There is a natural functor $\dStF \colon \cdga_k \to \PresLeftu U$ mapping $A$ to $\dStF_A$.
There moreover exists a natural transformation
\[
\mymatrix{
\cdga_k \dcell[r][\dgLie][\dStF][\formal] & \PresLeftu U
}
\]
such that for any $A \in \cdga_k$, the induced functor $\dgLie_A \to \dStF_A$ is equivalent to $\formal_A$ as defined in \autoref{adjoint-exist}.
\end{prop}

\begin{proof}
Let us recall the category $\int \MCdgLie$ defined in the proof of \autoref{commute-dglie}. Its objects are pairs $(A,L)$ where $A \in \MCcdga_k$ and $L \in \MCdgLie_A$.

We define $\int \left(\quot{\MCcdgaunbounded}{-}\right)\op$ to be the following (1-)category.
\begin{itemize}
\item An object is a pair $(A,B)$ where $A \in \MCcdga_k$ and $B \in \quot{\MCcdgaunbounded_A}{A}$.
\item A morphism $(A,B) \to (A', B')$ is a map $A \to A'$ together with a map $B' \to B \otimes_A^{\Lcot} A'$ of $A'$-dg-algebras.
\end{itemize}
From \autoref{liecohomology}, we get a functor $\coho \colon \int \MCdgLie \to \int \left( \quot{\MCcdgaunbounded}{-}\right)\op$ preserving quasi-isomorphisms. This induces a diagram of $(\infty,1)$-categories
\[
\mymatrix{
\int \dgLie \ar[rr]^-{\coho} \ar[dr] && \int \left(\quot{\cdgaunbounded}{-}\right)\op \ar[dl] \\ & \MCcdga_k &
}
\]
Restricting to the full subcategory spanned by pairs $(A,L)$ where $L \in \dgLieLib_A$, we get a functor
\[
\mymatrix{\int \dgLieLib \ar[rr]^-\coho && \int \dgExt\op}
\]
Using \autoref{commute-coho-good}, we see that this last functor preserves coCartesian morphisms over $\MCcdga_k$.
It defines a natural transformation between functors $\MCcdga_k \to \inftyCatu U$.
Since both the functors at hand map quasi-isomorphisms to categorical equivalences, it factors through the localisation $\cdga_k$ of $\MCcdga_k$.
We now have a natural transformation
\[
\mymatrix{
\cdga_k \dcell[r][{\dgLieLib}][{\dgExt\op}][][=>][12pt] & \inftyCatu U
}
\]
Composing with the functor $\siftedst \colon \inftyCatu U \to \inftyCatu V$, we get a natural transformation $\formal \colon \dgLie \simeq \siftedst(\dgLieLib) \to \dStF$.
Moreover, for any $A \in \cdga_k$, both $\dgLie_A$ and $\dStF_A$ are presentable and the induced functor $\formal_A \colon \dgLie_A \to \dStF_A$ admits a right adjoint (see \autoref{adjoint-exist}).
\end{proof}

\begin{rmq}\label{formal-nattrans}
The Grothendieck construction defines a functor
\[
\formal \colon \mymatrix{
\int \dgLie \ar[r] & \int \dStF
}
\]
over $\cdga_k$. Note that we also have a composite functor
\[
\mymatrix{
G \colon \int \dgLie \ar[r]^-{\coho} & \int (\quot{\cdgaunbounded}{-})\op \ar[r]^-h & \int \dStF
}
\]
where $h$ is deduced from the Yoneda functor. The functor $\formal$ is by definition the relative left Kan extension of the restriction of $G$ to $\int \dgLieLib$. It follows that we have a natural transformation $\formal \to G$. We will use that fact a few pages below.
\end{rmq}

\begin{prop}
The functor
\[
\dgLie \colon \cdga_k \to \PresLeftu U
\]
is a stack for the fpqc topology.
\end{prop}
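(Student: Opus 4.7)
The idea is to reduce descent for $\dgLie$ to descent for $\dgMod$ via monadicity. Fix an ffqc cover $A \to B$ in $\cdga_k$ and let $B^{\bullet}$ denote its Čech conerve (so $B^{n} = B^{\otimes_A (n+1)}$). I want to show the canonical comparison
\[
\dgLie_A \longrightarrow \lim_{[n] \in \Delta}\dgLie_{B^{n}}
\]
is an equivalence of $\mathbb U$-presentable $(\infty,1)$-categories.

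\textbf{Step 1: Assemble the forgetful functors as a natural transformation.} Exactly as for $\dgLie$ itself in \autoref{commute-dglie}, the constructions $A \mapsto \dgMod_A$ and $A \mapsto \oubli_A$ glue to give a natural transformation $\oubli \colon \dgLie \to \dgMod$ of functors $\cdga_k \to \PresLeftu U$ (to check this lies in $\PresLeftu U$, one uses that $\oubli_A$ is a right adjoint but also preserves sifted colimits, as shown in the proof of \autoref{dglie-algtheory}, so passing to the right adjoint $\libre$ we also have a natural transformation in $\PresLeftu U$). Restricting to the Čech diagram yields a map of cosimplicial objects in $\PresLeftu U$ whose components are monadic (proof of \autoref{dglie-algtheory}).

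\textbf{Step 2: Descent for modules.} It is classical that the functor $\dgMod \colon \cdga_k \to \PresLeftu U$ is a stack for the ffqc topology; see \cite{toen:ttt}. In particular the bottom row
\[
\dgMod_A \longrightarrow \lim_{[n] \in \Delta}\dgMod_{B^{n}}
\]
is an equivalence.

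\textbf{Step 3: Monadic descent.} Let $T$ be the monad $\oubli_A \circ \libre_A$ on $\dgMod_A$ corresponding to the monadic adjunction of Step 1. The free Lie algebra functor commutes with base change (this is what the natural transformation from \autoref{commute-dglie} encodes, and is in any case visible from PBW $\libre \simeq$ a natural combinatorial sum involving $\Sym$ and $\otimes$). This produces, on the category of cartesian sections of $\dgMod$ over $\Delta$, a well-defined monad whose algebras form precisely $\lim_{[n]} \dgLie_{B^{n}}$. Since $T$ itself preserves sifted colimits and is compatible with base change along the Čech diagram, Lurie's version of Barr--Beck in the parametrized form \cite[4.7.5.2]{lurie:halg} identifies
\[
\lim_{[n]\in\Delta}\dgLie_{B^{n}} \simeq \mathrm{Mod}_{T}\Bigl(\lim_{[n]\in\Delta}\dgMod_{B^{n}}\Bigr).
\]
Combined with Step 2, the right-hand side is $\mathrm{Mod}_{T}(\dgMod_A) \simeq \dgLie_A$, giving the desired equivalence.

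\textbf{Main obstacle.} The serious bookkeeping lies in Step 3: to invoke Barr--Beck along the whole cosimplicial diagram one must produce a coherent natural transformation of adjunctions $\libre \dashv \oubli$ as a map of functors $\cdga_k \to \PresLeftu U$, with the Beck--Chevalley condition along every base change (equivalently, one must lift the componentwise monad structure to a monad on the limit category). I would carry this out along the lines of the Grothendieck construction used in \autoref{commute-dglie} and \autoref{formal-nattrans}: form the relative categories $\int \dgLie$ and $\int \dgMod$ over $\cdga_k$, show $\oubli$ is a relative right adjoint (using that $\oubli_A$ preserves both limits \emph{and} sifted colimits, so that the base change squares commute and the Beck--Chevalley condition is free), then apply the $\infty$-categorical Barr--Beck to the Čech slice of this relative adjunction. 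Once this coherence is in place, all the other steps are formal.
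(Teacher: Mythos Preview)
Your proof is correct and takes essentially the same approach as the paper: both reduce descent for $\dgLie$ to descent for $\dgMod$ via the conservative, limit-preserving forgetful natural transformation. The paper's argument is a two-sentence sketch that simply records these two properties of the forgetful transformation and invokes the resulting principle as a black box, whereas you have unpacked the monadic descent mechanism (Beck--Chevalley for $\libre$, Barr--Beck on the limit) that underlies it.
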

\begin{proof}
The functor $\dgLie$ is endowed with a forgetful natural transformation to $\dgMod$, the stack of dg-modules. This forgetful transformation is pointwise conservative and preserves limits. This implies that $\dgLie$ is also a stack.
\end{proof}

\begin{df}\label{dstfoverstack}
Let $X$ be an algebraic derived stack.
The $(\infty,1)$-category $\dStF_X$ of formal stacks over $X$ is the limit of the diagram
\[
\mymatrix{
\left(\quot{\dAff_k}{X}\right)\op \ar[r] & \dAff_k\op = \cdga_k \ar[r]^-{\dStF} & \PresLeftu U
}
\]
Similarly, the $(\infty,1)$-category $\dgLie_X$ of dg-Lie algebras over $X$ is the limit of
\[
\mymatrix{
\left(\quot{\dAff_k}{X}\right)\op \ar[r] & \dAff_k\op = \cdga_k \ar[r]^-{\dgLie} & \PresLeftu U
}
\]
The natural transformation $\formal \colon \dgLie \to \dStF$ induces a functor
\[
\formal_X \colon \dgLie_X \to \dStF_X
\]
By definition, it admits a right adjoint which we will denote by $\lie_X$.
\end{df}

\begin{rmq}
Of course, for any $A \in \cdga_k$ we get $\dStF_{\Spec A} \simeq \dStF_A$ and $\dgLie_{\Spec A} \simeq \dgLie_A$ since the above diagrams have an initial object.
\end{rmq}

\begin{rmq}
The functor $\lie_X$ may not commute with base change.
\end{rmq}

\subsection{Tangent Lie algebra of a derived Artin stack locally of finite presentation}

We start be using the construction we described in \autoref{explainlocalisation}.
Let us consider $\Cc$ the following category.
\begin{itemize}
\item An object is a pair $(A, F \to G)$ where $A$ is in $\MCcdga_k$ and $F \to G$ is a morphism in the model category of simplicial presheaves over $(\MCcdga_A)\op$.
\item A morphism $(A, F \to G) \to (B, F' \to G')$ is the datum of a morphism $A \to B$ together with a commutative square
\[
\mymatrix{
F \ar[r]^f \ar[d] & F' \ar[d]\\
G \ar[r]_g & G'
}
\]
of presheaves over $(\MCcdga_B)\op$
\end{itemize}
A map in $\Cc$ as above is a weak equivalence if the morphism $A \to B$ is an identity and if the maps $f$ and $g$ are weak equivalences in the model category of simplicial presheaves.
We set $\int {\presh(\dAff)}^{\Delta^1}$ to be the $(\infty,1)$-localization of $\Cc$ along weak equivalences.
The natural functor $\int {\presh(\dAff)}^{\Delta^1} \to \MCcdga_k$ is a coCartesian fibration classified by the functor $A \mapsto \presh(\dAff_A)^{\Delta^1}$.

Let $\Dd$ denote the following category.
\begin{itemize}
\item An object is a pair $ (A,F) $ where $F$ is a simplicial presheaf over the opposite category of morphisms in $\MCcdga_A$.
\item A morphism $(A,F) \to (B,G)$ is a morphism $A \to B$ and a map $F \to G$ as simplicial presheaves over ${(\MCcdga_B)\op}^{\Delta^1}$.
\end{itemize}
A map in $\Dd$ as above is a weak equivalence if the morphism $A \to B$ is an identity and if $F \to G$ is a weak equivalence in the model category of simplicial presheaves.
We will denote by $\int \presh\left(\dAff^{\Delta^1}\right)$ the $(\infty,1)$-category obtained from $\Dd$ by localizing along weak equivalences. The natural functor $\int \presh\left(\dAff^{\Delta^1}\right) \to \MCcdga_k$ is a coCartesian fibration.

\begin{lem}\label{maps-stacks}
There is a relative adjunction
\[
f \colon \int \presh\left(\dAff^{\Delta^1}\right) \rightleftarrows  \int \presh\left(\dAff\right)^{\Delta^1} \,: g
\]
over $\MCcdga_k$.
They can be described on the fibers as follows. Let $A \in \MCcdga_k$.
The left adjoint $f_A$ is given on morphisms between affine schemes to the corresponding morphism of representable functors.
The right adjoint $g_A$ maps a morphism $F \to G$ to the representable simplicial presheaf
\[
\Map\left( -, F \to G\right)
\]
\end{lem}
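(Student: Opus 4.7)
My plan is to produce the adjunction fiberwise using the universal property of presheaves, then assemble it into a relative adjunction over $\MCcdga_k$ via a standard criterion from \cite{lurie:htt}.

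First, fix $A \in \MCcdga_k$ and build $f_A$. Applying the Yoneda embedding levelwise to the arrow category yields a functor $j_A \colon \dAff_A^{\Delta^1} \to \presh(\dAff_A)^{\Delta^1}$ sending $(X \to Y)$ to the corresponding morphism of representables. The $(\infty,1)$-category $\presh(\dAff_A)^{\Delta^1}$ is presentable (hence cocomplete), and $\presh(\dAff_A^{\Delta^1})$ is the free cocompletion of $\dAff_A^{\Delta^1}$, so by \cite[5.1.5.6]{lurie:htt} the functor $j_A$ extends essentially uniquely to a colimit-preserving functor $f_A \colon \presh(\dAff_A^{\Delta^1}) \to \presh(\dAff_A)^{\Delta^1}$. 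The adjoint functor theorem then gives a right adjoint $g_A$, whose value on an arrow $(F \to G)$ is recovered by testing against representables, yielding the formula in the statement.

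Next, I would upgrade this to a relative adjunction. Both Grothendieck constructions are classified by functors $\MCcdga_k \to \inftyCatu V$ whose transition maps along $A \to B$ are induced (levelwise) by left Kan extension along the base change $- \otimes_A B \colon \dAff_A \to \dAff_B$. Since the Yoneda embedding is natural under base change, the assignment $A \mapsto j_A$ extends to a natural transformation of such functors, and by the universal property characterizing each $f_A$ as a colimit-preserving extension of $j_A$, the family $(f_A)_A$ itself assembles into a natural transformation. This produces the required morphism of coCartesian fibrations $f \colon \int \presh(\dAff^{\Delta^1}) \to \int \presh(\dAff)^{\Delta^1}$ over $\MCcdga_k$.

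Finally, since each $f_A$ admits a right adjoint $g_A$, Lurie's relative adjoint existence criterion \cite[7.3.2.6]{lurie:htt} produces a right adjoint $g$ to $f$ in the $(\infty,1)$-category of $(\infty,1)$-categories over $\MCcdga_k$. Note that $g$ will not preserve coCartesian edges in general: the $g_A$ do not commute with base change, because pullbacks of representables are rarely representable. The main technical step, and the one I expect to require the most care, is the verification that the $f_A$ glue to a morphism of coCartesian fibrations; however this ultimately reduces to comparing two colimit-preserving functors out of $\presh(\dAff_A^{\Delta^1})$ that already agree on representables by naturality of the Yoneda embedding, and hence agree on all of $\presh(\dAff_A^{\Delta^1})$.
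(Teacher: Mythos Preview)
Your approach is the mirror image of the paper's, and that reversal is exactly where the difficulty lies. The paper constructs $g$ first, not $f$: it writes down a strict $1$-functor $\Cc \to \Dd$ between the model categories (sending $(A,F\to G)$ to the presheaf $(S\to T)\mapsto \Map(S\to T,F\to G)$), derives it to obtain $g$ as an honest functor of total $(\infty,1)$-categories over $\MCcdga_k$, checks that the fiberwise left adjoints $f_A$ exist and commute with base change, and then invokes the relative adjoint criterion \cite[8.3.2.11]{lurie:halg} to produce $f$. The point is that $g$ has an explicit point-set formula, so it can be built as a strict functor between the ambient $1$-categories and then localized; no coherence issue arises.

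Your proposal inverts this: build $f$ first by gluing the $f_A$. The step you flag as ``requiring the most care'' is genuinely incomplete as written. Showing that for each $A\to B$ the two colimit-preserving functors out of $\presh(\dAff_A^{\Delta^1})$ agree on representables only tells you that each naturality square commutes up to \emph{some} equivalence; it does not produce a coherent natural transformation of functors $\MCcdga_k \to \inftyCatu V$, nor equivalently a functor between the Grothendieck constructions. One would need either a strict model for $f$ (which is awkward, since left Kan extensions are not point-set constructions here) or a more careful argument via relative Kan extensions as in \cite[4.3.2.14]{lurie:htt}. The paper's choice to start with $g$ is not incidental: it is precisely how the coherence problem is bypassed. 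Your citation \cite[7.3.2.6]{lurie:htt} is also off; the relative adjunction machinery lives in \cite{lurie:halg}, not \cite{lurie:htt}.
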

\begin{proof}
Let us define a functor $\Cc \to \Dd$ mapping $(A,F \to G)$ to the functor 
\[
( S \to T ) \mapsto \Map(S \to T, F \to G)
\]
We can now derive this functor (replacing therefore $F \to G$ with a fibrant replacement). We get a functor
\[
g \colon \int \presh\left(\dAff\right)^{\Delta^1} \to \int \presh\left(\dAff^{\Delta^1}\right)
\]
which commutes with the projections to $\MCcdga_k$.
Let $A$ be in $\MCcdga_k$ and let $g_A$ be the induced functor 
\[
\presh(\dAff_A)^{\Delta^1} \to \presh\left( \dAff_A^{\Delta^1} \right)
\]
It naturally admits a left adjoint. Namely the left Kan extension $f_A$ to the Yoneda embedding
\[
\dAff_A^{\Delta^1} \to \presh(\dAff_A)^{\Delta^1}
\]
For any morphism $A \to B$ in $\MCcdga_k$, there is a canonical morphism
\[
f_B\left( \left(B \otimes_A - \right)_! X \right) \to 
\left(B \otimes_A -\right)_! f_A(X)
\]
which is an equivalence.
[When $X = \Spec A' \to \Spec A''$ is representable then both left and right hand sides are equivalent to $\Spec B' \to \Spec B''$ where $B' = B \otimes_A A'$ and $B'' = B \otimes_A A''$].
We complete the proof using \cite[8.2.3.11]{lurie:halg}.
\end{proof}

Let now $\int \comma{\pt}{\presh(\dAff)} \to \cdga_k$ denote the coCartesian fibration classified by the subfunctor $A \mapsto \comma{\Spec(A)}{\presh(\dAff_A)}$ of $\presh(\dAff)^{\Delta^1}$.
Let also $\int \presh(\dAff^*)$ be defined similarly to $\int \presh(\dAff^{\Delta^1})$. It is classified by a functor
\[
A \mapsto \presh\left(\comma{\Spec A}{\dAff_A}\right)
\]

\begin{prop} \label{pointed-adjunction}
The adjunction of \autoref{maps-stacks} induces a relative adjunction
\[
\int \presh\left(\dAff^\pt\right) \rightleftarrows \int \comma{\pt}{\presh(\dAff)}
\]
over $\MCcdga_k$. It moreover induces a natural transformation
\[
\mymatrix{
\cdga_k \dcell[r][{\presh(\dAff^\pt)}][{\comma{\pt}{\presh(\dAff)}}][][=>][12pt] & \inftyCatu V
}
\]
\end{prop}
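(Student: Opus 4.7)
The proof follows the template of \autoref{maps-stacks}. First, I would realize both pointed categories as appropriate subcategories of those appearing in that lemma: the under-category $\comma{\pt}{\presh(\dAff_A)} = \comma{y(\Spec A)}{\presh(\dAff_A)}$ embeds fully faithfully into $\presh(\dAff_A)^{\Delta^1}$ as the morphisms with source the terminal presheaf $y(\Spec A)$; and the full inclusion $\iota_A \colon \dAff_A^\pt \hookrightarrow \dAff_A^{\Delta^1}$ sending a pointed affine $\Spec A \to \Spec B$ to its underlying morphism (this is fully faithful because $\Spec A$ is terminal in $\dAff_A$) induces an adjunction $\iota_{A!} \dashv \iota_A^*$ between the associated presheaf categories.

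Next I would construct the fiberwise adjunction $f^\pt_A \dashv g^\pt_A$ by restricting the adjunction of \autoref{maps-stacks}. Set $g^\pt_A := \iota_A^* \circ g_A$ applied to pointed morphisms: explicitly, it sends $y(\Spec A) \to G$ to the presheaf $(\Spec A \to \Spec B) \mapsto \Map_{y(\Spec A)/}(y(\Spec B), G)$, using that $y(\Spec A)$ is terminal to rewrite $\Map(S \to T, y(\Spec A) \to G)$ as $\Map(T, G) \times_{\Map(S, G)} \ast$. The left adjoint $f^\pt_A$ is the left Kan extension of the pointed Yoneda map $(\Spec A \to \Spec B) \mapsto (y(\Spec A) \to y(\Spec B))$. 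The adjunction relation is readily verified on representables by Yoneda and extends by colimit-preservation. Base-change compatibility for $A \to B$ reduces, by colimit-preservation, to the representable case, where both $f^\pt_B \circ (B \otimes_A -)_!$ and $(B \otimes_A -)_! \circ f^\pt_A$ send a representable pointed affine $\Spec A \to \Spec B'$ to the pointed morphism $y(\Spec B) \to y(\Spec(B \otimes_A B'))$.

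Finally, applying \cite[8.2.3.11]{lurie:halg} as in \autoref{maps-stacks} packages this fiberwise data into a relative adjunction over $\MCcdga_k$. The associated relative left adjoint $\int \presh(\dAff^\pt) \to \int \comma{\pt}{\presh(\dAff)}$ preserves coCartesian edges and therefore classifies a natural transformation $\presh(\dAff^\pt) \Rightarrow \comma{\pt}{\presh(\dAff)}$ between functors $\MCcdga_k \to \inftyCatu V$. Both source and target send quasi-isomorphisms of cdgas to equivalences (by the same argument as for the unpointed categories in \autoref{maps-stacks}), so this natural transformation descends to the localisation $\cdga_k$. The main obstacle is organising the coherence of base-change compatibility for the pointed Yoneda embedding; this ultimately reduces to functoriality of cobase change for augmented $A$-algebras along $A \to B$, which is straightforward but requires care in the $\infty$-categorical setup.
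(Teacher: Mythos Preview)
Your proof is correct and reaches the same conclusion by essentially the same mechanism, but the decomposition differs slightly from the paper's. You build the pointed relative adjunction directly: you restrict the right adjoint $g_A$ of \autoref{maps-stacks} along the inclusion $\dAff_A^\pt \hookrightarrow \dAff_A^{\Delta^1}$, identify the left adjoint as the left Kan extension of the pointed Yoneda embedding, and then invoke the relative-adjunction criterion in one step. The paper instead proceeds in two stages: it first establishes a separate relative adjunction $\int \presh(\dAff^\pt) \rightleftarrows \int \presh(\dAff^{\Delta^1})$ via restriction and left Kan extension (citing \cite[8.3.2.11]{lurie:halg}), composes this with the adjunction of \autoref{maps-stacks} to land in $\int \presh(\dAff)^{\Delta^1}$, and only then observes that the left adjoint factors through $\int \comma{\pt}{\presh(\dAff)}$, with the composite $\int \comma{\pt}{\presh(\dAff)} \to \int \presh(\dAff)^{\Delta^1} \to \int \presh(\dAff^\pt)$ serving as the relative right adjoint. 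Your route is more economical; the paper's is more modular and makes the factorisation through the under-category explicit rather than implicit in the construction of $f_A^\pt$. Both arguments conclude identically: the left adjoint preserves coCartesian morphisms, giving a natural transformation over $\MCcdga_k$ which descends to $\cdga_k$ since both functors invert quasi-isomorphisms.
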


\begin{proof}
We define the restriction functor
\[
\int \presh\left(\dAff^{\Delta^1}\right) \to \int \presh(\dAff^\pt)
\]
It admits a fiberwise left adjoint, namely the left Kan extension, which commutes with base change. This defines -- using \cite[8.3.2.11]{lurie:halg} -- a relative left adjoint 
\[
\int \presh(\dAff^\pt) \to \int \presh\left( \dAff^{\Delta^1} \right)
\]
Composing with the relative adjunction of \autoref{maps-stacks}, we get a relative adjunction
\[
\int \presh(\dAff^\pt) \rightleftarrows \int \presh(\dAff)^{\Delta^1}
\]
The left adjoint factors through $\int \comma{\pt}{\presh(\dAff)}$ and the composed functor
\[
\int \comma{\pt}{\presh(\dAff)} \to \int \presh(\dAff)^{\Delta^1} \to \int \presh(\dAff^\pt)
\]
is its relative right adjoint.
It follows that the functor $\int \presh(\dAff^\pt) \to \int \comma{\pt}{\presh(\dAff)}$ preserves coCartesian morphisms over $\MCcdga_k$. We get a natural transformation
\[
\mymatrix{
\MCcdga_k \dcell[r][{\presh(\dAff^\pt)}][{\comma{\pt}{\presh(\dAff)}}][][=>][12pt] & \inftyCatu V
}
\]
As both functors at hand map quasi-isomorphisms of cdga's to equivalences of categories, it factors through the localisation $\cdga_k$ of $\MCcdga_k$.
\end{proof}

\begin{prop}
Let $X$ be an algebraic derived stack.
There are functors
\[
\phi \colon \quot{\comma{X}{\presh(\dAff_k)}}{X} \to \lim_{\Spec A \to X} \comma{\Spec A}{\presh(\dAff_A)}
\]
and
\[
\theta \colon \lim_{\Spec A \to X} \comma{\Spec A}{\presh(\dAff_A)} \to \lim_{\Spec A \to X} \presh\left(\dAff_A^\pt\right)
\]
\end{prop}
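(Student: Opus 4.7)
The plan is to build each functor using the universal property of the limits introduced in \autoref{dstfoverstack}.

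For $\phi$, I would exploit pullback along affine probes of $X$. Every $f \colon \Spec A \to X$ gives a colimit-preserving pullback functor $f^* \colon \presh(\dAff_k)_{/X} \to \presh(\dAff_k)_{/\Spec A} \simeq \presh(\dAff_A)$. Applied to a retract diagram $(X \to Y \to X)$, this produces the pointed presheaf $(\Spec A \to f^*Y)$ in $\comma{\Spec A}{\presh(\dAff_A)}$, whose pointing is obtained by pulling back the section $X \to Y$ and whose implicit structure map to $\Spec A$ comes from $Y \to X$. The canonical equivalences $g^*f^* \simeq (fg)^*$ make this assignment compatible as $\Spec A \to X$ varies in $(\cdga_k)^{/X}$, so the universal property of the limit produces the desired functor $\phi$.

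For $\theta$, I would invoke \autoref{pointed-adjunction}, which provides a natural transformation of functors $\cdga_k \to \inftyCatu V$ whose $A$-component is the left adjoint $\presh(\dAff_A^\pt) \to \comma{\Spec A}{\presh(\dAff_A)}$ of the relative adjunction established there. Taking the limit over $\Spec A \to X$, this natural transformation induces a colimit-preserving functor
\[
\lim_{\Spec A \to X} \presh(\dAff_A^\pt) \longrightarrow \lim_{\Spec A \to X} \comma{\Spec A}{\presh(\dAff_A)}.
\]
Both sides are presentable $\infty$-categories and the induced functor preserves colimits, so the adjoint functor theorem supplies a right adjoint which I define to be $\theta$. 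Unpacking this definition, $\theta$ sends a compatible family $(\Spec A \to Y_A)_{\Spec A \to X}$ to the family of pointed mapping presheaves $(\Spec A \to \Spec B) \mapsto \Map_{\comma{\Spec A}{\presh(\dAff_A)}}(\Spec B, Y_A)$.

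The construction of $\phi$ is essentially tautological once the naturality of base change is invoked. The construction of $\theta$ is also formal, but it relies on the technical content of \autoref{pointed-adjunction} --- namely that the relevant pointwise left adjoints assemble into a natural transformation compatible with the base change structure maps on both sides. Since this compatibility is already established there, no new calculation is required and the whole proposition reduces to manipulations of limits of presentable $\infty$-categories.
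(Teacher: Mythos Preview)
Your proposal is correct and follows essentially the same route as the paper: $\phi$ is built by pulling back retract diagrams along each affine probe $\Spec A \to X$ and assembling via the limit, and $\theta$ is obtained as the right adjoint to the limit of the pointwise left adjoints supplied by \autoref{pointed-adjunction}, invoking presentability. The only difference is cosmetic---the paper phrases the construction of $\phi$ as a single composite $\quot{\comma{X}{\presh(\dAff_k)}}{X} \to \lim_{\Spec A \to X} \quot{\comma{\Spec A}{\presh(\dAff_k)}}{\Spec A} \simeq \lim_{\Spec A \to X} \comma{\Spec A}{\presh(\dAff_A)}$ rather than describing it on objects---and your added unpacking of $\theta$ on objects is an elaboration the paper omits.
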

\begin{proof}
The functor $\phi$ is given by the following construction:
\[
\quot{\comma{X}{\presh(\dAff_k)}}{X} 
\to \lim_{\Spec A \to X} \quot{\comma{\Spec A}{\presh(\dAff_k)}}{\Spec A}
\simeq \lim_{\Spec A \to X} \comma{\Spec A}{\presh(\dAff_A)}
\]
The second functor is constructed as follows.
From \autoref{pointed-adjunction} we get a functor
\[
\lim_{\Spec A \to X} \presh\left(\dAff_A^\pt\right) \to \lim_{\Spec A \to X} \comma{\Spec A}{\presh(\dAff_A)}
\]
It preserves colimits and both left and right hand sides are presentable. It thus admits a right adjoint $\theta$.
\end{proof}

\begin{rmq} \label{pointed-basechange}
The functor $\theta$ is the limit of the functors
\[
(\Spec A \to F) \mapsto \Map_{\comma{\Spec A}{\presh(\dAff_A)}}(-,\Spec A \to F)
\]
This construction commutes with base change. We can indeed draw the commutative diagram (where $S \to T$ is a morphism between affine derived schemes)
\[
\mymatrix{
\quot{\comma{T}{\presh(\dAff_k)}}{T} \ar[r] \ar[d] &
\presh\left(\quot{\comma{T}{\presh(\dAff_k)}}{T}\right) \ar[r] \ar[d] &
\presh\left(\quot{\comma{T}{\dAff_k}}{T}\right) \ar[d] &
\\
\quot{\comma{S}{\presh(\dAff_k)}}{S} \ar[r] &
\presh\left(\quot{\comma{S}{\presh(\dAff_k)}}{S}\right) \ar[r] &
\presh\left(\quot{\comma{S}{\dAff}}{S}\right) &
}
\]
The left hand side square commutes by definition of base change. The right hand side square also commutes as the restriction along a fully faithful functor preserves base change.
\end{rmq}

\begin{df}
Let $X$ be a derived stack.
Let us define the formal completion functor
\[
\for{(-)} \colon \quot{\comma{X}{\presh(\dAff_k)}}{X} \to 
\lim_{\Spec A \to X} \presh\left(\left(\dgExt_A\right)\op\right)
\]
as the composed functor
\begin{align*}
\quot{\comma{X}{\presh(\dAff_k)}}{X}
&\to \lim_{\Spec A \to X} \comma{\Spec A}{\presh(\dAff_A)} \\
&\to \lim_{\Spec A \to X} \presh\left(\dAff_A^\pt\right) \\
&\to \lim_{\Spec A \to X} \presh\left(\left(\dgExt_A\right)\op\right)
\end{align*}
\end{df}

\begin{rmq}\label{describeformalcompletion}
Let $u \colon S =\Spec A \to X $ be a point.
The functor $u^* \for{(-)}$ maps a pointed stack $Y$ over $X$ to the functor $\dgExt_A \to \sSets$
\[
B \mapsto \Map_{S/-/X}(\Spec B, Y)
\]
\end{rmq}

\begin{df}
Let $X$ be a derived stack.
Let $\dStptArt_X$ denote the full sub-category of 
$\quot{\comma{X}{\presh(\dAff)}}{X}$ spanned by those $X \to Y \to X$ such that $Y$ is a derived Artin stack over $X$.
\end{df}

\begin{lem}
The restriction of $\for{(-)}$ to $\dStptArt_X$ has image in $\dStF_X$.
\end{lem}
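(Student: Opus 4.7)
The plan is to verify, for each map $u \colon S = \Spec A \to X$ from an affine, that the component $u^* \for{Y} \colon \dgExt_A \to \sSets$ preserves finite products, hence lies in $\dStF_A = \sifted(\dgExt_A\op)$. Since $\dStF_X = \lim_{\Spec A \to X} \dStF_A$ is cut out of $\lim_{\Spec A \to X} \presh(\dgExt_A\op)$ by a pointwise (full-subcategory) condition, this reduces the problem to each $u$ separately. By the preceding remark, $u^* \for{Y}$ is the functor $B \mapsto \Map_{S/-/X}(\Spec B, Y)$. Since $\dgExt_A$ has terminal object $A$ itself (the case $M = 0$) and binary products $(A \oplus M_1) \times_A (A \oplus M_2) \simeq A \oplus (M_1 \oplus M_2)$ which remain in $\dgExt_A$, two checks are required: contractibility at the terminal object, and the product decomposition for binary products.

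For the terminal case, $\Map_{S/-/X}(S, Y)$ unwinds as the fiber over the canonical element $s \colon S \xrightarrow{u} X \to Y$ (using the structural factorization $X \to Y \to X$ of $Y \in \dStptArt_X$) of the identity of $\Map_{/X}(S, Y)$, hence is contractible. For the product case, I use
\[
\Map_{S/-/X}(\Spec B, Y) \;\simeq\; \mathrm{fib}_{s}\bigl(\Map_{/X}(\Spec B, Y) \longrightarrow \Map_{/X}(S, Y)\bigr),
\]
together with the \emph{infinitesimal cohesion} of the derived Artin stack $Y$: for a pullback of cdgas $B_1 \times_{B_0} B_2$ along maps $B_i \to B_0$ surjective on $H^0$, the natural map $Y(B_1 \times_{B_0} B_2) \to Y(B_1) \times_{Y(B_0)} Y(B_2)$ is an equivalence. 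Applied here with $B_0 = A$ and the retractions $A \oplus M_i \to A$ (trivially surjective on $H^0$), this gives
\[
\Map_{/X}(\Spec(B_1 \times_A B_2), Y) \;\simeq\; \Map_{/X}(\Spec B_1, Y) \times_{\Map_{/X}(S, Y)} \Map_{/X}(\Spec B_2, Y).
\]
Taking the fiber over $s$ on both sides and using the elementary fact that $\mathrm{fib}_{e_0}(E_1 \times_{E_0} E_2) \simeq \mathrm{fib}_{e_0}(E_1) \times \mathrm{fib}_{e_0}(E_2)$ yields the desired equivalence $\Map_{S/-/X}(\Spec(B_1 \times_A B_2), Y) \simeq \Map_{S/-/X}(\Spec B_1, Y) \times \Map_{S/-/X}(\Spec B_2, Y)$.

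The main obstacle is justifying the invoked infinitesimal cohesion, but this is a classical property of derived Artin stacks in the Toën--Vezzosi/Lurie framework, established inductively from the smooth atlas presentation; the cdga pullbacks appearing here, arising from trivial square-zero extensions $A \oplus M_i$ with $M_i$ free of finite type in non-positive degree, fall squarely within the standard hypotheses. A minor bookkeeping point to check separately is that the construction of $\for{(-)}$ is natural in $u$, so that the collection $\{u^* \for{Y}\}_{u}$ indeed assembles to an element of the limit $\dStF_X$ once each component is verified to lie in $\dStF_A$; this naturality is built into the definition via \autoref{pointed-basechange}.
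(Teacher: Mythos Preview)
Your proof is correct and follows essentially the same route as the paper's. The paper's own proof is a two-line sketch: reduce to an affine base via \autoref{pointed-basechange}, then invoke ``the existence of an obstruction theory for $Y \to X$''; you have simply unpacked the second step, phrasing the required property as infinitesimal cohesion of the derived Artin stack $Y$ rather than as an obstruction theory for the map $Y \to X$. These are two names for the same phenomenon in this context, and your explicit verification that the terminal object and binary products are preserved is exactly what the obstruction-theory citation is meant to supply.
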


\begin{proof}
We have to prove that whenever $X \to^f Y \to X$ is a pointed algebraic stack over $X$ then $\for Y$ is formal over $X$.
Because of \autoref{pointed-basechange}, it suffices to treat the case of an affine base. Let us assume $X = \Spec A$. The result follows from the existence of a relative cotangent complex $\Lcot_{Y/X}$:
\begin{align*}
\for{Y}\left((A \oplus M) \times_A (A \oplus N)\right) &\simeq \for{Y}(A \oplus (M \oplus N)) \simeq \Map_{X/-/X}(\Spec(A \oplus (M \oplus N)), Y) 
\\ &
\simeq \Map(f^* \Lcot_{Y/X}, M \oplus N) \simeq \Map(f^* \Lcot_{Y/X}, M) \times \Map(f^* \Lcot_{Y/X}, N)
\\ &
\simeq \for Y(A \oplus M) \times \for Y(A \oplus M)
\\
\\
\for Y\left( A \times_{A \oplus M[1]} A \right) &\simeq \for Y (A \oplus M)
\simeq \Map(f^* \Lcot_{Y/X},M)
\\
&\simeq \Omega \Map(f^* \Lcot_{Y/X}, M[1]) \simeq \Omega \for Y(A \oplus M[1])
\end{align*}
\end{proof}

\begin{rmq}\label{tangentistangent}
Let $X = \Spec A$ and let $X \to^y Y \to X \in \dStptArt_X$. Let us assume that $Y$ is locally of finite presentation over $A$. The tangent $\T_{\for Y/X,y}$ of the formal stack $\for{Y}$ over $X$ (see \autoref{tangentofformal}) is equivalent to the tangent $\T_{Y/X,y}$ of $Y$ at $y$ over $X$.
By definition (see \cite[1.4.1.14]{toen:hagii}), the tangent complex $\T_{Y/X,y}$ corepresents the functor
\[
\mathrm{Der}_{Y/X}(X,\dual{(-)}) \colon \begin{gathered} \app{\dgMod_A\op}{\sSets}{M}{\Map_{X/-/X}(X[\dual M], Y)} \end{gathered}
\]
where $X[\dual M]$ is the trivial square zero extension $\Spec(A \oplus \dual M)$.
Using \autoref{dglie-algtheory}, we know it is actually determined by the restriction of $\mathrm{Der}_{Y/X}(X,\dual{(-)})$ to $(\dgModLib_A)\op$.
On the other hand, the formal stack $\for{Y}$ is the functor
\[
\app{\dgExt_A}{\sSets}{B}{\Map_{X/-/X}(\Spec B, Y)}
\]
Our claim follows from \autoref{tangentofformal}.
\end{rmq}

\begin{df}\label{tgtliedef}
Let $X$ be an algebraic stack locally of finite presentation. We define its tangent Lie algebra as the $X$-dg-Lie algebra 
\[
\tgtlie_X = \lie_X\left( \for{(X \times X)} \right)
\]
where the product $X \times X$ is a pointed stack over $X$ through the diagonal $\Delta \colon X \to X \times X$ and the first projection.
\end{df}

\begin{proof}[of \autoref{tangent-lie}]
Since $X$ has a perfect tangent complex, for any $u \colon S = \Spec A \to X$, the canonical map $\Gamma_u^* \T_{S \times X/S} \to u^* \Delta^* \T_{X \times X/X}$ is an equivalence (here $\Gamma_u \colon S \to S \times X$ denotes the graph of $u$).
We study the underlying $X$-dg-module of $\lie_X(\for{(X \times X)})$. It represents the functor $\dgMod_X\op \to \sSets$
\begin{align*}
M \mapsto \Map\left(\formal_X(\libre_X M), \for{(X \times X)}\right) 
& \simeq \lim_{u \colon \Spec A \to X} \Map\left(\formal_A(\libre_A(u^* M)), \for{(\Spec A \times X)}\right) \\
& \simeq \lim_{u \colon \Spec A \to X} \Map\left(u^* M, \oubli_A \lie_A\left(\for{(\Spec A \times X)}\right)\right) \\
& \simeq \lim_{u \colon \Spec A \to X} \Map\left(u^* M, \Gamma_u^* \T_{\Spec A \times X/\Spec A}\right) \\
& \simeq \lim_{u \colon \Spec A \to X} \Map\left(u^* M, u^* \Delta^* \T_{X \times X/X}[-1]\right) \\
& \simeq \Map(M,\Delta^* \T_{X \times X/X}[-1])
\end{align*}
Let us precise that the equivalence between the second and the third line is obtained using \autoref{adjoint-exist} and \autoref{tangentistangent}.
We conclude that the underlying $X$-dg-module of $\lie_X(\for{(X \times X)})$ is indeed $\Delta^* \T_{X \times X/X}[-1] \simeq \T_X[-1]$.

Let us now consider the functor 
\[
\comma{X}{\dStArtlfp} \to \dStptArt_X
\]
mapping a morphism $X \to Z$ to the stack $X \times Z$ pointed by the graph map $X \to X \times Z$ and endowed with the projection morphism to $X$.
Composing this functor with $\for{(-)}$ and $\lie_X$ we finally get the wanted functor
\[
\comma{X}{\dStArtlfp} \to \comma{\tgtlie_X}{\dgLie_X}
\] 
Let $f \colon X \to Z$ be in $\comma{X}{\dStArtlfp}$. Since $Z$ is locally of finite presentation, its tangent is perfect and the canonical map
\[
\beta \colon \Gamma_f^* \T_{X \times Z/X} \to f^* \Delta_Z^* \T_{Z \times Z/Z}
\]
is an equivalence.
At the level of Lie algebras, we have a canonical map
\[
\alpha \colon u^* \tgtlie_Z \to \lie_X\left( \for{(X \times Z)} \right)
\]
The map of $X$-dg-modules underlying $\alpha$ is equivalent to $\beta[-1]$. Since the forgetful functor $\dgLie_X \to \dgMod_X$ is conservative, we deduce that $\alpha$ is an equivalence.
\end{proof}

\begin{rmq}\label{rmq-tgtliebasechange}
We also proved above that for any map $u \colon X \to Z$ between locally finitely presented derived Artin stacks, the canonical map
\[
u^* \tgtlie_Z \to \lie_X\left(\for{(X \times Z)}\right)
\]
is an equivalence.
\end{rmq}

\subsection{Derived categories of formal stacks}

The goal of this subsection is to prove the following

\begin{thm}\label{derived-global}
Let $X$ be an algebraic stack locally of finite presentation.
There is a colimit preserving monoidal functor 
\[
\lierep_X \colon \Qcoh(X) \to \dgRep_X(\tgtlie_X)
\]
where $\dgRep_X(\tgtlie_X)$ is the $(\infty,1)$-category of representations of $\tgtlie_X$.
Moreover, the functor $\lierep_X$ is a retract of the forgetful functor.
\end{thm}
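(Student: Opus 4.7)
The plan is to realise $\lierep_X$ as pullback along the second projection of the formal groupoid $\for{(X \times X)}$, and to identify the target of that pullback with $\dgRep_X(\tgtlie_X)$ via a module-level enhancement of the adjunction $\formal_X \dashv \lie_X$. Recall from \autoref{tgtliedef} that $\for{(X \times X)}$ is pointed over $X$ via the diagonal $\Delta$ and carries its structure of $X$-stack via the first projection. The second projection $p_2 \colon \for{(X \times X)} \to X$ remains available and satisfies $p_2 \circ \Delta = \id_X$; it is this map which will carry the Lie action.

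Working affine-locally over a morphism $\Spec A \to X$ with $A$ noetherian, my first task would be to upgrade the equivalence $\dStF_A \simeq \dgLie_A$ of \autoref{adjoint-exist} to a module-level equivalence. Given $L \in \dgLieGood_A$ and $M \in \dgMod_A$, one extends the Chevalley-Eilenberg construction of \autoref{liecohomology} to coefficients in $M$, producing a $\coho_A(L)$-module whose dual encodes the datum of a representation of $L$ on $M$. Dualising the argument of \autoref{good-fullyff} then yields an equivalence between $\dgRep_A(L)$ and the category of quasi-coherent sheaves over the formal stack $\formal_A(L)$, in such a way that the forgetful functor to $\dgMod_A$ corresponds to restriction along the unit $\Spec A \to \formal_A(L)$.

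With this equivalence at hand, set $\lierep_A(\mathcal F) := p_2^*(\mathcal F)$, transported across the equivalence to a $\tgtlie_A$-representation. Colimit-preservation and the (symmetric) monoidal structure then follow at once from those of $p_2^*$, and the retract property is immediate from
\[
\operatorname{Forget} \circ \lierep_A \simeq \Delta^* \circ p_2^* \simeq (p_2 \circ \Delta)^* = \id_{\dgMod_A}.
\]
To globalise, I would follow the template of \autoref{commute-dglie} and \autoref{dstfoverstack}: the assignment $A \mapsto \lierep_A$ is natural in $A$ thanks to \autoref{commute-coho-good} and \autoref{pointed-basechange}, so it assembles into a natural transformation of stacks on $\cdga_k$ which descends to the sought functor $\lierep_X$ on $X$. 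Monoidality, colimit-preservation and the retract identity are inherited fibrewise.

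The main obstacle is the module-level equivalence used above, together with its naturality in $A$. The finiteness-and-duality arguments of \autoref{good-fullyff} carry over essentially unchanged once one tracks the free-module generators, but compatibility with base change is delicate and will require revisiting the Kan-extension and relative-adjunction machinery of Section 2 (notably the proof of \autoref{commute-dglie}). A second, more routine task is to define the ambient category of quasi-coherent sheaves on formal stacks compatibly with base change, which is handled exactly as for $\dStF_X$ in \autoref{dstfoverstack}. The expected coincidence of the resulting action with Kapranov's Atiyah-class action, promised in the introduction, is then a separate verification and does not enter this proof.
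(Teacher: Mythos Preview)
Your overall strategy --- pull back along the second projection of $\for{(X\times X)}$ to land in sheaves on the formal completion, then transport to $\dgRep_X(\tgtlie_X)$ --- is exactly what the paper does. The construction $\nu_X$ in the paper's proof is precisely your $p_2^*$, and the retract verification $p_2\circ\Delta=\id_X$ is the heart of the paper's final paragraph. So the architecture is right.

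The gap is in the transport step. You want an \emph{equivalence} between $\Lqcoh(\formal_A L)$ and $\dgRep_A(L)$, obtained by ``dualising the argument of \autoref{good-fullyff}'' over noetherian $A$. Neither hypothesis is available in the generality of the theorem: the limit defining $\dgRep_X(\tgtlie_X)$ in \autoref{dstfoverstack} ranges over \emph{all} $\Spec A\to X$, not just noetherian ones (and recall from the remark following \autoref{good-fullyff} that cohomological boundedness of $A$ is genuinely needed there), while the restriction of $\tgtlie_X$ to $\Spec A$ is typically not a good dg-Lie algebra. The paper sidesteps both issues in two moves. First, it does \emph{not} identify $\for{(X\times X)}$ with $\formal_X(\tgtlie_X)$; instead it uses the adjunction counit $\theta\colon\formal_X(\tgtlie_X)\to\for{(X\times X)}$ and simply pulls back along it. Second, rather than an equivalence it establishes only that the comparison $\Psi\colon\Lqcoh^X(\formal_X L)\to\dgRep_X(L)$ is \emph{fully faithful and colimit-preserving} (\autoref{ffformallierep}), which holds for arbitrary $A$ and arbitrary $L$ because it is proved first for $L\in\dgLieLib_A$ via \autoref{derivedcat-ff-local} and then right-Kan-extended using that $\dgRep_A$ sends sifted colimits of Lie algebras to limits (\autoref{lqcohformaltodgrep}). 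The functor $\lierep_X$ is then the composite $\Psi\circ\theta^*\circ\nu_X$. The price is that the retract argument needs one extra ingredient, \autoref{cocartesianretract}, to identify $\Theta_X\circ\Psi$ with pullback along the canonical point $X\to\formal_X(\tgtlie_X)$; your clean $p_2\circ\Delta=\id_X$ only closes the loop once that identification is in hand.
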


We will prove this theorem at the end of the subsection. For now, let us state and prove a few intermediate results.
Let $A$ be any $\MCcdga_k$ and $L \in \MCdgLie_A$.
The category $\MCdgRep_A(L)$ of representations of $L$ is endowed with a combinatorial model structure for which equivalences are exactly the $L$-equivariant quasi-isomorphisms and for which the fibrations are those maps sent onto fibrations by the forgetful functor to $\MCdgMod_A$.
\begin{df}
Let us denote by $\dgRep_A(L)$ the underlying $(\infty,1)$-category of the model category $\MCdgRep_A(L)$.
\end{df}
\begin{lem}\label{derivedcat-local}
Let $L$ be an $A$-dg-Lie algebra. There is a Quillen adjunction
\[
f_L^A \colon \MCdgMod_{\coho_A L} \rightleftarrows \MCdgRep_A(L) \noloc g_L^A
\]
Given by 
\begin{align*}
&f_L^A \colon V \mapsto \Envel_A\left(A[\eta] \otimes_A L\right) \otimes_{\coho_A L} V \\
&g_L^A \colon M \mapsto \Homint_{L} \left(\Envel_A\left(A[\eta] \otimes_A L\right), M \right)
\end{align*}
where $A[\eta] \otimes_A L$ is as in \autoref{section-liecoho} and $\Homint_L$ denotes the mapping complex of dg-representations of $L$.
\end{lem}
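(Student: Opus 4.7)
The plan is to exhibit $P := \Envel_A(A[\eta] \otimes_A L)$ as an $(\Envel_A L, \coho_A L)$-bimodule, deduce the adjunction formally, and then verify the Quillen conditions. The left $\Envel_A L$-action on $P$ comes from the inclusion of dg-Lie algebras $L \hookrightarrow A[\eta] \otimes_A L$. For the right $\coho_A L$-action, I would exploit that $P$ is a cocommutative dg-Hopf algebra whose comultiplication $\Delta$ makes every element of $A[\eta] \otimes_A L$ primitive, and that the quotient map $p \colon P \to P \otimes_{\Envel_A L} A = \homol_A L$ is then a morphism of dg-coalgebras. Consequently,
\[
\delta := (\id_P \otimes p) \circ \Delta \colon P \to P \otimes_A \homol_A L
\]
is a right $\homol_A L$-comodule structure on $P$. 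By linear duality against $\coho_A L = \Homint_A(\homol_A L, A)$, this yields the right action $u \cdot f := (\id_P \otimes f)(\delta(u))$. A direct computation using the counit axiom of $\Delta$ together with the fact that $y \in \Envel_A L$ acts on $\homol_A L$ through its augmentation $\varepsilon(y) \in A$ shows $\delta$ is $\Envel_A L$-linear, so the two actions on $P$ commute.

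Given the bimodule structure, the functors $f_L^A$ and $g_L^A$ of the statement coincide with the standard tensor--Hom pair $P \otimes_{\coho_A L} (-) \dashv \Homint_L(P, -)$, and the adjunction is automatic. To verify it is Quillen, I would show (possibly after replacing $L$ by a cofibrant resolution in $\MCdgLie_A$) that $P$ is cofibrant as a left $\Envel_A L$-dg-module, whence $\Homint_L(P, -) = g_L^A$ sends (acyclic) fibrations to (acyclic) fibrations. By the PBW theorem (\autoref{pbw}), $P$ is isomorphic to $\Sym_A(A[\eta] \otimes_A L) \cong \Envel_A L \otimes_A \Sym_A(L[1])$ as a graded left $\Envel_A L$-module. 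The filtration $P^{\leq n}$ by $\eta$-degree consists of sub-$\Envel_A L$-modules stable under $d_P$ (since $d_P(\eta x) = x - \eta dx$ weakly decreases $\eta$-degree), and its successive subquotients are isomorphic to $\Envel_A L \otimes_A \Sym^n(L[1])$. Since the underlying dg-module of a cofibrant dg-Lie algebra is itself cofibrant (by the earlier lemma in the paper), each $\Sym^n(L[1])$ is cofibrant over $A$ (using characteristic zero), and thus each subquotient, and ultimately $P$ itself, is cofibrant over $\Envel_A L$.

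The chief obstacle is the construction of the right $\coho_A L$-action on $P$ and the verification that it commutes with the left $\Envel_A L$-action, which requires careful bookkeeping of the Hopf-coalgebra structures on $P$ and on $\homol_A L$ together with the augmentation of $\Envel_A L$. Once that bimodule structure is in place, both the adjunction and the Quillen conditions reduce to standard tensor--Hom and PBW arguments.
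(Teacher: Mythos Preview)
Your argument is correct and considerably more explicit than the paper's, which simply declares the adjunction ``immediate'' and checks the Quillen condition in one line by asserting that the \emph{left} adjoint $f_L^A$ preserves quasi-isomorphisms (via the same filtration trick used for $\homol_A$) and fibrations (tensor products being right exact). You instead unpack the $(\Envel_A L,\coho_A L)$-bimodule structure on $P$ through the Hopf comultiplication and the quotient $P \to \homol_A L$, and then verify the Quillen condition on the \emph{right} adjoint by exhibiting $P$ as cofibrant over $\Envel_A L$ via the $\eta$-filtration and PBW. Both routes are standard; yours makes the bimodule structure transparent (the paper never spells it out), while the paper's avoids any cofibrancy discussion of $P$.

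One caveat: your cofibrancy argument for $P$ uses that each $\Sym^n_A(L[1])$ is cofibrant over $A$, which requires the underlying $A$-module of $L$ to be cofibrant. Your parenthetical ``possibly after replacing $L$ by a cofibrant resolution'' does not literally rescue the statement, since the Quillen adjunction is asserted for the model categories attached to the given $L$, not to a resolution of it. In practice this is harmless---the paper only ever applies the lemma with $L$ free or good, hence cofibrant---and the paper's own route through $f_L^A$ sidesteps the issue. If you want your argument to match the stated generality, note that graded-freeness of $P$ over $\Envel_A L$ (which follows from PBW for \emph{any} $L$) already forces $g_L^A$ to preserve fibrations; only the preservation of trivial fibrations genuinely needs the extra hypothesis.
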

\begin{rmq}\label{describeg}
The image $g_L^A(M)$ is a model for the cohomology $\RHomint_L(A,M)$ of $L$ with values in $M$.
We can give an explicit description of $g_L^A(M)$ as a \emph{graded} module, similarly to \autoref{rmq-coalg}:
\[
g_L^A(M) \simeq \Homint_A(\Sym(L[1]),M)
\]
The differentials differ though. As the one on $g_L^A(M)$ encodes part of the action of $L$ on $M$.
\end{rmq}
\begin{proof}
The fact that $f_L^A$ and $g_L^A$ are adjoint functors is immediate. The functor $f_L^A$ preserves quasi-isomorphisms (see the proof of \autoref{homol-inftyfunctor}) and fibrations. This is therefore a Quillen adjunction.
\end{proof}

\begin{rmq}\label{monoidallierep}
The category $\MCdgRep_A(L)$ is endowed with a symmetric tensor product. If $M$ and $N$ are two dg-representations of $L$, then  $M \otimes_A N$ is endowed with the diagonal action of $L$. The category $\MCdgMod_{\coho_A L}$ is also symmetric monoidal.
Moreover, for any pair of $L$-dg-representations $M$ and $N$, there is a natural morphism
\[
g_L^A(M) \otimes_{\coho_A L} g_L(N) \to g_L^A\left(M \otimes_A N\right)
\]
This makes $g_L^A$ into a weak monoidal functor.
In particular, the functor $g_L^A$ defines a functor $\MCdgLie_{L} \to \MCdgLie_{\coho_A(L)}$, also denoted $g_L^A$.
\end{rmq}

\begin{prop}\label{derivedcat-ff-local}
Let $L$ be a good dg-Lie algebra over $A$. The induced functor 
\[
f_L^A \colon \dgMod_{\coho_A L} \to \dgRep_A(L)
\]
of $(\infty,1)$-categories is fully faithful.
\end{prop}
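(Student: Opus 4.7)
The plan is to show that the adjunction unit $\eta_V \colon V \to g_L^A f_L^A(V)$ is a quasi-isomorphism for every $V \in \dgMod_{\coho_A L}$. Write $R := \Envel_A(A[\eta] \otimes_A L)$ for short, so that $f_L^A(V) = R \otimes_{\coho_A L} V$ and $g_L^A(M) = \Homint_L(R, M)$; the argument then proceeds in two steps.

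First, I would verify the claim on the single generator $V = \coho_A L$. By Poincaré--Birkhoff--Witt (\autoref{pbw}) and the contractibility of $A[\eta]$, the dg-algebra $R$ is isomorphic as a graded $A$-module to $\Sym_A(A[\eta] \otimes_A L)$, and it is in fact a cofibrant Koszul-type resolution of the trivial $L$-representation $A$ in $\MCdgRep_A(L)$. It follows that
\[
g_L^A f_L^A(\coho_A L) = \Homint_L(R, R) \simeq \Homint_L(R, A) \simeq \coho_A L,
\]
where the middle equivalence uses the quasi-isomorphism $R \to A$ of $L$-representations (and cofibrancy of $R$), while the last combines the definition $\coho_A L = \dual{\homol_A L}$ with $\homol_A L \simeq R \otimes_{\Envel_A L} A$ via tensor--hom adjunction. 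One then checks that the unit map is the identity under this identification.

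Second, I would extend to a general $V$ using the fact that $\dgMod_{\coho_A L}$ is generated under sifted colimits by shifts of $\coho_A L$. The functor $f_L^A$ is a left adjoint and preserves all colimits, so it only remains to ensure that $g_L^A$ preserves the colimits arising in the image of $f_L^A$. This is the main obstacle, because $R$ is typically not compact in $\dgRep_A(L)$: indeed $\Sym_A(L[1])$ is in general infinite-dimensional as an $A$-dg-module, and no naive compactness argument applies. The goodness of $L$ enters here essentially: $L$ is built from $0$ by finitely many pushouts along the maps $\libre(A[-p]) \to \libre(\Mc(A[-p]))$, which yields a finite filtration of $R$ whose successive pieces are of the form $\Envel_A(A[\eta] \otimes_A L')$ with $L' \in \dgLieLib_A$. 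Combined with the cofibre-sequence decomposition used in the proof of \autoref{chevalleycolim} and the explicit formula $\coho_A \libre(M) \simeq A \oplus \dual{M}[-1]$ from \autoref{ext-coho}, an inductive argument reduces the desired colimit-preservation to the elementary case of a single free dg-Lie algebra of finite type, from which the conclusion follows.
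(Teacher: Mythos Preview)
Your first step is essentially correct and agrees with the paper: one checks that the unit is an equivalence on the generator $V=B:=\coho_A L$, which boils down to $\RHom_L(R,A)\simeq\coho_A L$ via the identifications you wrote.

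The gap is in your second step. What you need --- and what you correctly isolate as the obstacle --- is that $g_L^A=\Homint_L(R,-)$ commute with the relevant colimits, i.e.\ that $R\simeq A$ be \emph{compact} in $\dgRep_A(L)$. Your proposed justification does not establish this. There is no ``finite filtration of $R$ whose successive pieces are of the form $\Envel_A(A[\eta]\otimes_A L')$ with $L'\in\dgLieLib_A$'' inside $\dgRep_A(L)$: the stages $\Envel_A(A[\eta]\otimes_A L_i)$ coming from a cell decomposition $0=L_0\to\cdots\to L_n=L$ are $L_i$-representations, not $L$-representations, and their successive cofibres are not enveloping algebras of free Lie algebras. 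The appeals to \autoref{chevalleycolim} and \autoref{ext-coho} concern how $\coho_A$ behaves on colimits of Lie algebras, which is a different question from compactness of $R$ in $\dgRep_A(L)$; and even in your ``elementary case'' of a single free Lie algebra you have not said why the conclusion holds.

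The paper reaches the same compactness statement by a direct route. Since $L$ is good it has finitely many Lie-algebra cells, so $\Envel_A(A[\eta]\otimes_A L)$ is a \emph{finite} cellular object over $\Envel_A(L)$, equipped with its augmentation to $A$; a result cited from \cite{toen:ttt} then gives that the forgetful functor $\dgMod_A\to\dgMod_{\Envel_A(L)}^{\operatorname{left}}$ preserves perfect objects --- equivalently, $A$ is perfect in $\dgRep_A(L)$. Once this is known, the argument closes cleanly: $f_L^A$ is fully faithful on $\Perf(B)$ (this reduces by d\'evissage in both variables to your computation for $V=B$), and a colimit-preserving functor that is fully faithful on compact generators and sends them to compact objects is fully faithful on the whole category. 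This is exactly where goodness is used, and it is the step your sketch does not supply.
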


\begin{rmq}\label{Aisperfect}
The above proposition can be seen as a consequence of some general Morita theory statement. The adjunction at hand is induced by the $\Envel_A L \otimes \coho_A L$-bimodule $A$ where $\coho_A L = \Homint_{\Envel_A L}(A,A)$.
In such a case, the left adjoint $f_L^A$ is fully faithful is and only if $A$ is a perfect $\Envel_A L$-dg-module.
The assumption that $L$ is good ensures the perfection of $A$ as a left $\Envel_A L$-dg-module.

More generally, let $C$ be an associative $A$-dg-algebra.
Let us assume $C$ is a finite cellular object in the category of $A$-dg-algebras: there is a finite sequence
\[
A \simeq C_0 \to C_1 \to \dots \to C_n \simeq C
\]
such that for any $1 \leq i \leq n$, the map $C_{i-1} \to C_i$ fits into a coCartesian square
\[
\mymatrix{
\Tens_A M \ar[r] \ar[d] & A \ar[d] \\ C_{i-1} \ar[r] & C_i \cocart
}
\]
where $T_A M$ is the tensor algebra generated by a free $A$-dg-module $M$ of finite type.
If moreover $C$ is equipped with an augmentation $C \to A$ then the left $C$-dg-module $A$ is perfect.
To prove this, let us assume $A$ is perfect as a left $C_{i-1}$-dg-module: $A$ is a finite colimit of free $C_{i-1}$-dg-modules. It follows that $\Tens_A(M[1]) \simeq C_i \otimes_{C_{i-1}} A$ is a finite colimit of free $C_i$-dg-modules. We then obtain $A$ by moding out the generators of $\Tens_A(M[1])$, again a finite colimit.
\end{rmq}

\begin{proof}[of \autoref{derivedcat-ff-local}]
This proof is very similar to that of \cite[2.4.12]{lurie:dagx}.
In this proof, we will write $f$ instead of $f_L^A$.
Let $B$ denote $\coho_A L$.
We first prove that the restriction $f_{|\Perf(B)}$ is fully faithful.
Let $V$ and $W$ be two $B$-dg-modules.
There is a map $\Map(V,W) \to \Map(fV,fW)$.
Fixing $V$ (resp. $W$), the set of $W$'s (resp. $V$'s) such that this map is an equivalence is stable under extensions, shifts and retracts.
It is therefore sufficient to prove that the map $\Map(B,B) \to \Map(fB,fB)$ is an equivalence, which follows from the definition (if we look at the dg-modules of morphisms, then both domain and codomain are quasi-isomorphic to $B = \coho_A L$).

To prove that $f \colon \dgMod_B \to \dgRep_A(L)$ is fully faithful, we only need to prove that $f$ preserves compact objects.
It suffices to prove that $f B \simeq A$ is compact in $\dgRep_A(L)$.
The (non commutative) $A$-dg-algebra $\Envel_A(L)$ is a finite cellular object (because $L$ is good) and is endowed with an augmentation morphism to $A$.
It follows that $A$, seen as a left $\Envel_A(L)$-dg-module through the augmentation, is a finite cellular object (see \autoref{Aisperfect}) and hence is compact.
The forgetful functor $\dgMod_A \to \dgMod_{\Envel_A(L)}^{\operatorname{left}}$ therefore preserves compact objects.
\end{proof}

Let us now study the behaviour of the adjunction $(f_L^A, g_L^A)$ with respect to change of base $A$ and of Lie algebra $L$. We will as the same time consider the compatibility with the monoidal structures.
Once more, we will use the procedure of \autoref{explainlocalisation}..
Let us consider the category $\int \MCdgLie\op$ such that:
\begin{itemize}
\item An object is a pair $(A,L)$ with $A \in \MCcdga_k$ and with $L \in \MCdgLie_A$ and
\item A morphism $(A,L) \to (B,L')$ is a map $A \to B$ together with a map $L' \to L \otimes_A^\Lcot B$ in $\MCdgLie_B$.
\end{itemize}
It is endowed with a functor $\int \MCdgLie\op \to \MCcdga_k$. We will say that a map in $\int \MCdgLie\op$ is a weak equivalence if the map of cdga's is an identity and the map of dg-Lie algebras is a quasi-ismorphism. Localising along weak equivalences, we obtain a coCartesian fibration of $(\infty,1)$-categories
\[
\int \dgLie\op \to \MCcdga_k
\]
classified by the functor $A \mapsto \dgLie_A\op$ (see the proof of \autoref{commute-dglie}).

Let $\ptfin$ denote the category of pointed finite sets -- see \autoref{ptfin}. For $n \in \N$, we will denote by $\langle n \rangle$ the finite set $\{\pt, 1, \dots, n\}$ pointed at $\pt$.
Let $\int \MCdgRep^{\otimes}$ be the following category.
\begin{itemize}
\item An object is a family $(A,L,\el{M}{m})$ with $A \in \MCcdga_k$, with $L \in \MCdgLie_A$ and with $M_i \in \MCdgRep_A(L)$.
\item A morphism $(A,L,\el{M}{m}) \to (B,L',\el{N}{n})$ is the datum of a map $(A,L) \to (B,L') \in \int \MCdgLie\op$, of a map $t \colon \langle m \rangle \to \langle n \rangle$ of pointed finite sets and for every $1 \leq j \leq n$ of a morphism $\bigotimes_{i \in t^{-1}(j)} M_i \otimes_A B \to N_j$ of $L'$-modules.
\end{itemize}
It comes with a projection functor $\int \MCdgRep^{\otimes} \to \int \MCdgLie\op \times \ptfin$.
We will say that a morphism in $\int \MCdgRep^{\otimes}$ is a weak equivalence if the underlying maps of pointed finite sets, of cdga's and of dg-Lie algebras are identities and if the map dg-representations it contains is a quasi-isomorphism.
Let us denote by $\int \dgRep^{\otimes}$ the localisation of $\int \MCdgRep^{\otimes}$ along weak equivalences.
This defines a coCartesian fibration $p \colon \int \dgRep^{\otimes} \to \int \MCdgLie\op \times \ptfin$ (using once again \cite[2.4.19]{lurie:dagx}).

Let now $\int \MCdgMod_{\coho(-)}^{\otimes}$ be the following category
\begin{itemize}
\item An object is a family $(A,L,\el{V}{m})$ with $A \in \MCcdga_k$, with $L \in \MCdgLie_A$ and with $V_i \in \MCdgMod_{\coho_A L}$.
\item A morphism $(A,L,\el{V}{m}) \to (B,L',\el{W}{n})$ is the datum of a map $(A,L) \to (B,L') \in \int \MCdgLie\op$, of a map of pointed finite sets $t \colon \langle m \rangle \to \langle n \rangle$ and for every $1 \leq j \leq n$ of a morphism of $\coho_B L'$-dg-modules $\bigotimes_{i \in t^{-1}(j)} V_i \otimes_{\coho_A L} \coho_B L' \to W_j$.
\end{itemize}
We will say that a morphism in $\int \MCdgRep^{\otimes}$ is a weak equivalence if the underlying maps of pointed finite sets, of cdga's and of dg-Lie algebras are identities and if the map of dg-modules it contains is a quasi-isomorphism.
Localising along weak equivalences, we get a coCartesian fibration of $(\infty,1)$-categories $q \colon \int \dgMod_{\coho(-)}^{\otimes} \to \int \MCdgLie\op \times \ptfin$.

\begin{lem}
The above coCartesian fibrations $p$ and $q$ define functors
\begin{align*}
\dgRep,\, \dgMod_{\coho(-)} \colon \int \dgLie\op \to \monoidalinftyCatu V
\end{align*}
\end{lem}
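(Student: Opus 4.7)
The plan is to invoke the straightening–unstraightening correspondence \cite[3.3.2.2]{lurie:htt}: each of the coCartesian fibrations $p$ and $q$ over $\int \dgLie\op \times \ptfin$ is classified by a functor
\[
\int \dgLie\op \times \ptfin \to \inftyCatu V,
\]
equivalently by a functor $\int \dgLie\op \to \Fct(\ptfin, \inftyCatu V)$. To land in $\monoidalinftyCatu V \subset \Fct(\ptfin, \inftyCatu V)$, I need to verify, by \autoref{monoidalcats}, that for each fixed $(A,L) \in \int \dgLie\op$ the induced functor $\ptfin \to \inftyCatu V$ has fiber $\dgRep_A(L)$ (resp.\ $\dgMod_{\coho_A L}$) over $\langle 1 \rangle$ and satisfies the Segal condition.

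Fix $(A,L)$. Restricting $p$ over $\{(A,L)\} \times \ptfin$ yields, by construction, the category whose objects over $\langle m \rangle$ are tuples $(M_1, \ldots, M_m)$ of $L$-representations and whose morphisms over $t \colon \langle m \rangle \to \langle n \rangle$ are families of maps $\bigotimes_{i \in t^{-1}(j)} M_i \to N_j$ of $L$-representations (with the diagonal action on the tensor product), localised at quasi-isomorphisms. This is precisely the coCartesian fibration associated to the combinatorial symmetric monoidal model category $\MCdgRep_A(L)$, so the fiber over $\langle 1\rangle$ is $\dgRep_A(L)$ and the Kronecker maps induce equivalences with the $n$-fold product. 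The argument for $q$ is identical, using instead the symmetric monoidal model category $\MCdgMod_{\coho_A L}$ of dg-modules over the commutative dg-algebra $\coho_A L$.

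To promote this fiberwise statement to a functor out of $\int \dgLie\op$, I check that base change along a morphism $(A,L) \to (B,L')$ of $\int \dgLie\op$ is symmetric monoidal. For $p$, such a morphism consists of a cdga map $A \to B$ and a Lie map $L' \to L \otimes_A^{\Lcot} B$; the induced functor between fibers is the composition of the strong monoidal scalar extension with restriction along the Lie map, and restriction is symmetric monoidal because the tensor product uses the diagonal action. For $q$, the induced functor is scalar extension along the dg-algebra map $\coho_A L \to \coho_B L'$ provided by \autoref{commute-coho-good}, which is strong monoidal. Hence the classifying functors do factor through $\monoidalinftyCatu V$.

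The main technical point is the prior assertion that $p$ and $q$ are \emph{coCartesian} fibrations of $(\infty,1)$-categories after localisation at quasi-isomorphisms; this follows from \cite[2.4.19]{lurie:dagx} applied to the strict scalar-extension functors at the model-category level, exactly in the manner of the proof of \autoref{commute-dglie}. Once granted, the proof is essentially a formal unwinding, since the combinatorial data of $\int \dgRep^{\otimes}$ and $\int \dgMod_{\coho(-)}^{\otimes}$ have been arranged so as to match the shape of a symmetric monoidal $(\infty,1)$-category in the sense of \cite[2.0.0.7]{lurie:halg}.
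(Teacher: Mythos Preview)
Your argument is essentially the paper's: straighten the coCartesian fibration, then verify fibrewise that the resulting $\ptfin$-shaped diagram satisfies the Segal condition of \autoref{monoidalcats}. The additional verification you give that base change is symmetric monoidal is implicit in the paper's assertion that the classified functor lands in $\monoidalinftyCatu V$.

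There is, however, one point where you and the paper diverge, and it matters. You write that $p$ and $q$ are coCartesian fibrations over $\int \dgLie\op \times \ptfin$, but as constructed just before the lemma they lie over the \emph{strict} base $\int \MCdgLie\op \times \ptfin$. Straightening therefore produces a functor $\int \MCdgLie\op \to \monoidalinftyCatu V$, not yet one out of $\int \dgLie\op$. The paper closes this gap by observing that the resulting functor sends quasi-isomorphisms of dg-Lie algebras to equivalences of symmetric monoidal categories, hence factors through the localisation $\int \dgLie\op$. Your final paragraph invokes \cite[2.4.19]{lurie:dagx} to justify that $p$ and $q$ are coCartesian after localising the \emph{total space} at quasi-isomorphisms---but that is the assertion already made before the lemma, not the descent from $\int \MCdgLie\op$ to $\int \dgLie\op$. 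You should add the missing sentence: a quasi-isomorphism $L \to L'$ over a fixed $A$ induces an equivalence $\dgRep_A(L') \to \dgRep_A(L)$ (and likewise for $\dgMod_{\coho_A(-)}$, using \autoref{homol-inftyfunctor}), so the functor out of $\int \MCdgLie\op$ factors through $\int \dgLie\op$.
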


\begin{proof}
For any object $(A,L) \in \int \MCdgLie\op$, the pulled back coCartesian fibration
\[
\int \dgRep^{\otimes} \times_{\int \MCdgLie\op \times \ptfin} \{(A,L)\} \times \ptfin \to \ptfin
\]
defines a symmetric monoidal structure on the $(\infty,1)$-category $\dgRep_A(L)$ -- see \autoref{monoidalcats}.
The coCartesian fibration $p$ is therefore classified by a functor
\[
\int \MCdgLie\op \to \monoidalinftyCatu V
\]
Moreover, this functor maps quasi-isomorphisms of dg-Lie algebras to equivalences. Hence it factors through a functor
\[
\dgRep \colon \int \dgLie\op \to \monoidalinftyCatu V
\]
The case of $\dgMod_{\coho(-)}$ is isomorphic.
\end{proof}

We will now focus on building a natural transformation between those two functors.
Let us build a functor $g \colon \int \MCdgRep^{\otimes} \to \int \MCdgMod_{\coho(-)}^{\otimes}$
\begin{itemize}
\item The image of an object $(A,L,\el{M}{m})$ is the family $(A,L,\el{V}{m})$ where $V_i$ is the $\coho_A L$-dg-module 
\[
g_L^A(M_i) = \Homint_L\left(\Envel_A\left(A[\eta] \otimes_A L\right),M_i\right)
\]
\item The image of an arrow $\bigotimes_{i \in t^{-1}(j)} M_i \otimes_A B \to N_j$ is the composition 
\begin{align*}
\bigotimes g_L^A(M_i) \otimes_{\coho_A L} \coho_B L' \to &
g_L^A\left(\bigotimes M_i\right) \otimes_{\coho_A L} \coho_B L' \\
\to & g_{L'}^B\left(\bigotimes M_i \otimes_A B\right)
\\ \to & g_{L'}^B(N)
\end{align*}
where the second map sends a tensor $\lambda \otimes \mu$ to $(\lambda \otimes \id).\mu$ with
\[
\lambda \otimes \id \colon \Envel_B\left(B[\eta] \otimes_B L'\right) \to \Envel_B\left(B[\eta] \otimes_A L \right) = \Envel_A\left(A[\eta] \otimes_A L\right) \otimes_A B \to \left(\bigotimes M_i\right) \otimes_A B
\]
\end{itemize}
The functor $g$ induces a functor of $(\infty,1)$-categories
\[
g \colon \int \dgRep^{\otimes} \to \int \dgMod_{\coho(-)}^{\otimes}
\]
which commutes with the coCartesian fibrations to $\int \MCdgLie\op \times \ptfin$.

\begin{prop}\label{derivedcat-adjunction}
The functor $g$ admits a left adjoint $f$ relative to $\int \MCdgLie \op \times \ptfin$.
There is therefore a commutative diagram of $(\infty,1)$-categories 
\[
\mymatrix{
\int \dgMod_{\coho(-)}^{\otimes} \ar[rr]^f \ar[rd]_-p && \int \dgRep^{\otimes} \ar[dl]^-q \\ & \int \MCdgLie\op \times \ptfin &
}
\]
where $f$ preserves coCartesian morphisms.
It follows that $f$ is classified by a (monoidal) natural transformation
\[
\mymatrix{
\int \dgLie\op \dcell[r][{\dgMod_{\coho(-)}}][{\dgRep}][f][=>][12pt] & \monoidalinftyCatu V
}
\]
\end{prop}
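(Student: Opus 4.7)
My plan is to invoke the relative adjoint functor theorem \cite[8.3.2.11]{lurie:halg}, exactly as was already done in the proof of \autoref{pointed-adjunction}. Two things must be verified: first, a fiberwise left adjoint to $g$ exists over each object of $\int \MCdgLie\op \times \ptfin$; and second, this fiberwise adjoint commutes with coCartesian morphisms in both coordinates of the base.

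The fiber of $g$ over a pair $((A, L), \langle m \rangle)$ is the $m$-fold product of the functor $g_L^A \colon \dgRep_A(L) \to \dgMod_{\coho_A L}$, which by \autoref{derivedcat-local} admits the left adjoint $f_L^A$. This settles the existence of fiberwise left adjoints. For the compatibility with coCartesian morphisms I treat the two coordinates separately. Along a coCartesian morphism $(A, L) \to (B, L \otimes_A B)$ in $\int \MCdgLie\op$, I must produce an equivalence
\[
f_{L \otimes_A B}^B\bigl(V \otimes_{\coho_A L} \coho_B(L \otimes_A B)\bigr) \simeq f_L^A(V) \otimes_A B.
\]
Unfolding the explicit formula $f_L^A(V) = \Envel_A(A[\eta] \otimes_A L) \otimes_{\coho_A L} V$, this reduces to the base change identity $\Envel_A(A[\eta] \otimes_A L) \otimes_A B \simeq \Envel_B(B[\eta] \otimes_B (L \otimes_A B))$, which is immediate since $\Envel$ is a left adjoint. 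Along a coCartesian morphism over $\ptfin$ induced by the multiplication $\langle 2 \rangle \to \langle 1 \rangle$, I must show that the oplax monoidal structure on $f_L^A$ deduced by adjunction from the lax structure on $g_L^A$ of \autoref{monoidallierep} is actually strong, that is, $f_L^A(V \otimes_{\coho_A L} W) \simeq f_L^A(V) \otimes_A f_L^A(W)$ with the diagonal $L$-action on the right. This comes from the cocommutative coalgebra structure on $\Envel_A(A[\eta] \otimes_A L)$ encoded by \autoref{rmq-coalg}, which is precisely what implements the diagonal action on tensor products of representations.

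Lurie's criterion \cite[8.3.2.11]{lurie:halg} then delivers the relative left adjoint $f$, and its preservation of coCartesian morphisms is exactly what we have just verified. Straightening produces the announced natural transformation between the functors $\dgMod_{\coho(-)}$ and $\dgRep$; its monoidal character is the reflection of the coCartesian preservation along the $\ptfin$ factor, and so the natural transformation takes values in $\monoidalinftyCatu V$.

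The main obstacle I expect is bookkeeping: these computations must be set up at the strict model-category level, where the formulas for $f_L^A$, $\Envel$ and the bialgebra structure hold literally, and then transferred to the $(\infty,1)$-categorical localization. The key input is that $f_L^A$ is a left Quillen functor (by \autoref{derivedcat-local}), which ensures that the strict base change and monoidality identities descend to homotopical ones; writing this out cleanly at the level of the categories $\int \MCdgRep^{\otimes}$ and $\int \MCdgMod_{\coho(-)}^{\otimes}$ before localizing should yield the desired statement.
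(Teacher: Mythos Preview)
Your approach matches the paper's exactly: invoke \cite[8.3.2.11]{lurie:halg} after checking fiberwise left adjoints and compatibility with coCartesian transport. There is, however, one genuine omission.

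You verify compatibility only along morphisms of the form $(A,L) \to (B, L\otimes_A B)$ in $\int \MCdgLie\op$. But $\int \MCdgLie\op$ is the \emph{base} of the fibrations $p$ and $q$, not merely a fibration over $\MCcdga_k$; hence you must check compatibility with coCartesian lifts of \emph{every} morphism $(A,L) \to (B,L')$ in $\int\MCdgLie\op$, and such a morphism carries an additional datum: a map $L' \to L\otimes_A B$ in $\MCdgLie_B$. The paper handles this by factoring a general morphism as $(A,L) \to (B, L\otimes_A B) \to (B, L')$ and drawing a two-story commutative diagram; the lower square records that the forgetful functor $\dgRep_B(L\otimes_A B) \to \dgRep_B(L')$ along $L' \to L\otimes_A B$ matches base change along the induced map $\coho_B(L\otimes_A B) \to \coho_B L'$ after applying $f^B_{(-)}$. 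Your argument covers only the upper square. The missing verification is not difficult (it again unwinds from the explicit formula for $f^B_{L'}$), but as written your proof is incomplete: the phrase ``a coCartesian morphism $(A,L)\to(B,L\otimes_A B)$ in $\int\MCdgLie\op$'' suggests you may be conflating coCartesian morphisms \emph{over} $\MCcdga_k$ with arbitrary morphisms \emph{in} the base $\int\MCdgLie\op$.

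Everything else --- the fiberwise adjoint via \autoref{derivedcat-local}, the monoidality argument from the coalgebra structure of \autoref{rmq-coalg}, and the concluding straightening --- agrees with the paper's proof.
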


\begin{proof}
Whenever we fix $(A,L,\langle m \rangle)$ in $\int \MCdgLie \op \times \ptfin$, the functor $g$ restricted to the fibre categories admits a left adjoint (see \autoref{derivedcat-local}).
Moreover when $(A,L) \to (B,L')$ is a morphism in $\int \MCdgLie \op$, the following squares of monoidal functors commutes up to a canonical equivalence induced by the adjunctions
\[
\mymatrix{
\dgMod_{\coho_A L} \ar[r]^-{f_L^A} \ar[d]_{- \otimes_{\coho_A L} \coho_B(L \otimes_A B)} & \dgRep_A(L) \ar[d]^{- \otimes_{\Envel_A L} \Envel_B(L \otimes_A B)} \\
\dgMod_{\coho_B(L \otimes_A B)} \ar[r]^-{f^B_{L \otimes_A B}} \ar[d]_{- \otimes_{\coho_B(L \otimes_A B)} \coho_B(L')} & \dgRep_B(L \otimes_A B) \ar[d]^{\oubli} \\
\dgMod_{\coho_B L'} \ar[r]^-{f^B_{L'}} & \dgRep_B(L')
}
\]
For any family $(\el{V}{m})$ of $\coho_A L$-dg-modules, the canonical morphism
\[
\left(\bigotimes f_L^A(V_i) \right) \otimes_{\coho_A L} \coho_B L' \to f_{L'}^B \left( \left( \bigotimes V_i \right) \otimes_A B \right)
\]
is hence an equivalence.
This proves that $g$ satisfies the requirements of \cite[8.3.2.11]{lurie:halg}, admits a relative left adjoint $f$ which preserves coCartesian morphisms.
\end{proof}

Let us denote by $\int \MCdgMod^{\otimes}$ the category
\begin{itemize}
\item an object is a family $(A,\el{M}{m})$ where $A \in \MCcdga_k$ and $M_i \in \MCdgMod_A$
\item a morphism $(A,\el{M}{m}) \to (B,\el{N}{n})$ is the datum of a morphism $A \to B$, of a map $t \colon \langle m \rangle \to \langle n \rangle$ of pointed finite sets and for any $1 \leq j \leq n$ of morphism of $A$-dg-modules
\[
\otimes_{i \in t^{-1}(j)} M_i \to N_j
\]
\end{itemize}
There is a natural projection $\int \MCdgMod^{\otimes} \to \MCcdga_k \times \ptfin$.
We have three functors
\[
\mymatrix{
\int \MCdgRep^{\otimes} \ar[rr]^-\pi \ar[rd]_g && \int \MCdgMod^{\otimes} \times_{\MCcdga_k} \int \MCdgLie\op  \ar[dl]^\rho
\\ & \int \MCdgMod_{\coho(-)}^{\otimes} &
}
\]
compatible with the projections to $\int \MCdgLie\op \times \ptfin$. The functor $\pi$ is defined by forgetting the Lie action, while $\rho$ maps an $A$-dg-module $M$ and an $A$-dg-Lie algebra $L$ to the $\coho_A L$-dg-module $M$, where $\coho_A L$ acts through the augmentation map $\coho_A L \to A$.
The above triangle does not commute, but we have a natural transformation $g \to \rho \pi$, defined on a triple $(A,L,V)$ by
\[
\mymatrix{
g(A,L,V) = \Homint_L\left(\Envel_A\left(A[\eta] \otimes_A L\right), V\right) \ar[r]^-{\ev_\unit} & V = \rho \pi(A,L,V)
}
\]
We check that this map indeed commutes with the $\coho_A L$-action. We say that a map in $\int \MCdgMod^{\otimes}$ is a weak equivalence if the underlying maps of cdga's and of pointed sets are identities, and if the maps of dg-modules are quasi-isomorphisms. Localising the above diagram along weak equivalences, we get a tetrahedron 
\[
\shorthandoff{:;!?}
\xy <6mm,0cm>:
(1,0)*+{\int \dgMod^{\otimes} \times_{\MCcdga_k} \int \MCdgLie\op}="0",
(5,-2)*+{\int \dgRep^{\otimes}}="1",
(-3,-2)*+{\int \dgMod_{\coho(-)}^{\otimes}}="2",
(3,-5)*+{\int \MCdgLie\op \times \ptfin}="3",
\ar "1";"0" _(0.4)\pi
\ar "0";"2" _\rho
\ar "1";"2" ^g
\ar "0";"3" |!{"1";"2"}\hole ^(0.6)r
\ar "1";"3" ^q
\ar "2";"3" _p
\endxy
\]
where $p$, $q$ and $r$ are coCartesian fibrations -- see \cite[2.4.19]{lurie:dagx} -- where the upper face is filled with the natural transformation $g \to \rho \pi$ and where the other faces are commutative.

\begin{lem}
The functor $\rho$ admits a relative left adjoint $\tau$ and the functor $\pi$ preserves coCartesian maps.
Moreover, the natural transformation $\tau \to \pi f$ -- induced by $g \to \rho \pi$ and by the adjunctions -- is an equivalence.
\end{lem}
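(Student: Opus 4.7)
The plan is to construct $\tau$ fibrewise as the base change along the augmentation $\coho_A L \to A$, promote it to a relative left adjoint via \cite[8.3.2.11]{lurie:halg} (as was done for $f$ in \autoref{derivedcat-adjunction}), check that $\pi$ preserves coCartesian edges by an explicit description of coCartesian lifts, and verify the mate equivalence $\tau \simeq \pi f$ on a compact generator of $\dgMod_{\coho_A L}$.

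In the fibre over $(A, L, \langle m \rangle)$ the functor $\rho$ is simply the componentwise (symmetric monoidal) base change along $\coho_A L \to A$ and hence admits a strict left adjoint $\tau_{A,L} \colon (\el V m) \mapsto (V_i \otimes_{\coho_A L} A)_i$. These fibrewise adjoints commute with base change along any morphism $(A, L) \to (B, L')$ of $\int \MCdgLie\op$ through the canonical equivalences
\[
(V \otimes_{\coho_A L} A) \otimes_A B \simeq V \otimes_{\coho_A L} B \simeq (V \otimes_{\coho_A L} \coho_B L') \otimes_{\coho_B L'} B,
\]
so \cite[8.3.2.11]{lurie:halg} produces the relative left adjoint $\tau$ over $\int \MCdgLie\op \times \ptfin$. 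For $\pi$: given a morphism $(A, L, \langle m \rangle) \to (B, L', \langle n \rangle)$ over $t \colon \langle m \rangle \to \langle n \rangle$ and a family $(\el M m)$ of $L$-representations, the coCartesian lift in $\int \dgRep^{\otimes}$ is (the rectification of) the family with components $\bigotimes_{i \in t^{-1}(j)} M_i \otimes_A B$ equipped with the $L'$-action obtained via $L' \to L \otimes_A B$; forgetting this action yields exactly the coCartesian lift in $\int \dgMod^{\otimes} \times_{\MCcdga_k} \int \dgLie\op$.

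For the last assertion, both $\tau$ and $\pi f$ preserve fibrewise colimits --- the forgetful $\pi$ admits both induction and coinduction along $A \to \Envel_A L$ as adjoints on each fibre --- so it suffices to evaluate the mate of $g \to \rho \pi$ at the compact generator $V = \coho_A L$. There, $\tau(\coho_A L) = A$ and $\pi f(\coho_A L) = \Envel_A(A[\eta] \otimes_A L)$, and the map in question unwinds to the algebra unit $A \hookrightarrow \Envel_A(A[\eta] \otimes_A L)$. Since $A[\eta]$ is contractible, so is $A[\eta] \otimes_A L$ in $\dgMod_A$ and, by conservativity of the forgetful functor, in $\dgLie_A$; the PBW theorem \autoref{pbw} then gives $\Envel_A(A[\eta] \otimes_A L) \simeq \Sym_A(A[\eta] \otimes_A L) \simeq A$, so the unit is a quasi-isomorphism. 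The most delicate point is the monoidal bookkeeping when applying \cite[8.3.2.11]{lurie:halg} simultaneously across the cdga and the Lie-algebra directions of base change.
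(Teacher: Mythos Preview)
Your argument is correct and follows the same strategy as the paper: construct $\tau$ fibrewise as the left adjoint to restriction along the augmentation $\coho_A L \to A$, invoke \cite[8.3.2.11]{lurie:halg}, and identify the mate $\tau \to \pi f$ with the map induced by the unit $A \to \Envel_A(A[\eta]\otimes_A L)$, which is a quasi-isomorphism because $A[\eta]$ is contractible.

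Two minor remarks. First, your reduction to the generator $V=\coho_A L$ is an unnecessary detour: since $\tau^A_L(V)=V\otimes_{\coho_A L} A$ and $\pi f^A_L(V)=V\otimes_{\coho_A L}\Envel_A(A[\eta]\otimes_A L)$ for \emph{every} $V$, the map is $V\otimes_{\coho_A L}(-)$ applied to the quasi-isomorphism $A\to\Envel_A(A[\eta]\otimes_A L)$ of $\coho_A L$-modules, hence an equivalence for all $V$ at once; this is how the paper phrases it. Second, a terminological slip: in the fibre $\rho$ is \emph{restriction} of scalars along $\coho_A L\to A$, not base change; your identification of its left adjoint is nonetheless correct. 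You also spell out why $\pi$ preserves coCartesian edges, which the paper leaves implicit.
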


\begin{rmq}\label{cocartesianretract}
It follows from the above lemma the existence of natural transformations
\[
\mymatrix{
\int \dgLie\op \ar@/^20pt/[rr] _{}="up" ^{\dgMod_{\coho(-)}}
\ar[rr] |{\dgRep} ^*=<3pt>{}="midup" _*=<3pt>{}="middown"
\ar@/_20pt/[rr] ^{}="down" _{\dgMod}
&&
\monoidalinftyCatu V 
\ar @{=>} "up";"midup" ^-f
\ar @{=>} "middown";"down" ^-\pi
} 
\]
This lemma also describes the composite $\pi f$ as the base change functor along the augmentation maps $\coho_A L \to A$.
\end{rmq}

\begin{proof}
Let us first prove that $\rho$ admits a relative left adjoint $\tau$.
For any pair $(A,L) \in \int \MCdgLie\op$, the forgetful functor $\dgMod_A \to \dgMod_{\coho_A L}$ admits a left adjoint, namely the base change functor along the augmentation map $\coho_A L \to A$.
This left adjoint is monoidal and commutes with base change. It therefore fulfil the assumptions of \cite[8.3.2.11]{lurie:halg}.
The induced natural transformation $\tau \to \pi f$ maps a triple $(A,L,V) \in \int \dgMod_{\coho(-)}$ to the canonical map
\[
\tau^A_L(V) = V \otimes_{\coho_A L} A \to V \otimes_{\coho_A L} \Envel_A\left( L \otimes_A A[\eta] \right) = \pi^A_L f^A_L(V)
\]
which is an equivalence of $A$-dg-modules.
\end{proof}

Let us consider the functor of $(\infty,1)$-categories
\[
\dAff_k^{\Delta^2} \to \left(\monoidalinftyCatu V\right)\op
\]
mapping a sequence $X \to Y \to Z$ of derived affine schemes to the monoidal $(\infty,1)$-category $\Qcoh(Y)$.
We form the fibre product
\[
\mymatrix{
\Cc \cart \ar[r] \ar[d] & \dAff_k^{\Delta^2} \ar[d]^p \\
\dAff_k \ar[r]_-{\id_-} & \dAff_k^{\Delta^1}
}
\]
where $p$ is induced by the inclusion $(0 \to 2) \to (0 \to 1 \to 2)$. Finaly, we define $\Dd$ as the full subcategory of $\Cc$ spanned by those triangles $\Spec A \to \Spec B \to \Spec A$ where $B \in \dgExt_A$.
We get a functor
\[
F \colon \Dd \to \left(\monoidalinftyCatu V\right)\op
\]
mapping a trivial square-zero extension $B$ of $A$ to $\dgMod_B$.
Note that the functor $\Dd \to \dAff_k$ is a Cartesian fibration classified by the functor $A \mapsto \dgExt_A\op$.

Let us denote by $\oint \dStF \to \dAff_k$ the Cartesian fibration classified by the functor $\Spec A \mapsto \dStF_A = \siftedst(\dgExt_A\op)$.
The Yoneda natural transformation $\dgExt\op \to \dStF$ defines a functor $\Dd \to \oint \dStF$.
We define
\[
\Lqcoh \colon \oint \dStF \to \left(\monoidalinftyCatu V\right)\op
\]
as the left Kan extension of $F$ along $\Dd \to \oint \dStF$.

Let now $X$ be a derived stack. The category $\dStF_X$ defined in \autoref{dstfoverstack} is equivalent to the category of Cartesian sections $\phi$ as below -- see \cite[3.3.3.2]{lurie:htt}
\[
\mymatrix{
& \oint \dStF \ar[d] \\ \quot{\dAff_k}{X} \ar[ru]^\phi \ar[r] & \dAff_k
}
\]

\begin{df}\label{lqcohdstf}
Let $X$ be a derived stack. We define the functor of derived category of formal stacks over $X$:
\[
\Lqcoh^X \colon \mymatrix{
\dStF_X \simeq \Fct_{\dAff_k}^{\mathrm{Cart}}\left( \quot{\dAff_k}{X}, \oint \dStF \right) \ar[r]^-{\Lqcoh}
& \Fct\left( \quot{\dAff_k}{X}, \left(\monoidalinftyCatu V\right)\op \right) \ar[r]^-{\colim} & \left(\monoidalinftyCatu V\right)\op
}
\]
If $Y \in \dStF_X$ then $\Lqcoh^X(Y)$ is called the derived category of the formal stack $Y$ over $X$. We can describe it more intuitively as the limit of symmetric monoidal $(\infty,1)$-categories
\begin{equation}\label{descriptionlqcoh}
\Lqcoh^X(Y) \simeq \lim_{\Spec A \to X} \Lqcoh^{\Spec A}(Y_A) \simeq \lim_{\Spec A \to X} ~ \underset{\Spec B \to Y_A}{\lim_{B \in \dgExt_A}} \dgMod_B
\end{equation}
where $Y_A \in \dStF_A$ is the pullback of $Y$ along the morphism $\Spec A \to X$.
Remark that if $X = \Spec A$ and $Y = \Spec B$ with $B \in \dgExt_A$, then $\Lqcoh^X(Y)$ is nothing but $\dgMod_B$.
\end{df}

The same way, the opposite category of dg-Lie algebras over $X$ is equivalent to that of coCartesian section
\[
\dgLie_X\op \simeq \Fct^{\mathrm{coC}}_{\cdga_k}\left(\left(\quot{\dAff_k}{X}\right)\op, \int \dgLie\op \right)
\]
We can thus define
\begin{df}
Let $X$ be a derived stack. We define the functor of Lie representations over $X$ to be the composite functor $\dgRep_X \colon \dgLie_X\op \to \monoidalinftyCatu V$
\[
\mymatrix{
\dgLie_X\op \ar[r] & \Fct\left(\left(\quot{\dAff_k}{X}\right)\op, \int \dgLie\op \right) \ar[r]^-{\dgRep} & \Fct\left(\left(\quot{\dAff_k}{X}\right)\op, \monoidalinftyCatu V \right) \ar[r]^-\lim & \monoidalinftyCatu V
}
\]
In particular for any $L \in \dgLie_X$, this defines a symmetric monoidal $(\infty,1)$-category 
\[
\dgRep_X(L) = \lim_{\Spec A \to X} \dgRep_A(L_A)
\]
where $L_A \in \dgLie_A$ is the dg-Lie algebra over $A$ obtained by pulling back $L$.
\end{df}

\begin{prop}\label{ffformallierep}
Let $X$ be a derived stack.
There is a natural transformation
\[
\mymatrix{
\dgLie_X\op \dcell[r][{\Lqcoh^X(\formal_X(-))}][{\dgRep_X}][\Psi] & \monoidalinftyCatu V
}
\]
Moreover, for any $L \in \dgLie_X$, the induced monoidal functor $\Lqcoh^X(\formal_X L) \to \dgRep_X(L)$ is fully faithful and preserves colimits.
\end{prop}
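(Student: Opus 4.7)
The plan is to build the natural transformation $\Psi$ locally over each affine $\Spec A \to X$, then glue via base-change compatibility, and finally deduce full-faithfulness and colimit-preservation from their local counterparts. For a fixed $A \in \cdga_k$, the monoidal natural transformation $f \colon \dgMod_{\coho(-)} \to \dgRep$ from \autoref{derivedcat-adjunction} restricts over $\dgLieLib_A$ (where $\formal_A L' \simeq \Spec \coho_A L'$ by \autoref{ext-coho}) to a natural transformation
\[
\Psi_A^{\mathrm{lib}} \colon \Lqcoh^A(\formal_A(-)) \to \dgRep_A(-)
\]
of monoidal functors $\dgLieLib_A\op \to \monoidalinftyCatu V$.

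To extend $\Psi_A^{\mathrm{lib}}$ to all of $\dgLie_A\op$, I would observe that both functors carry sifted colimits of dg-Lie algebras to limits in $\monoidalinftyCatu V$. Indeed, $\formal_A$ is a left adjoint by \autoref{adjoint-exist} and $\Lqcoh^A$ is a sifted left Kan extension by \autoref{lqcohdstf}; while on the target side, $\Envel_A$ is a left adjoint and modules over a colimit of associative dg-algebras form the limit of the corresponding module categories in $\PresLeftu V$. Since $\dgLie_A \simeq \sifted(\dgLieLib_A)$ by \autoref{dglie-algtheory}, both functors are right Kan extended from their restrictions to $\dgLieLib_A\op$, so $\Psi_A^{\mathrm{lib}}$ extends uniquely to a natural transformation $\Psi_A \colon \Lqcoh^A(\formal_A(-)) \to \dgRep_A$ on $\dgLie_A\op$.

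For the globalization, the transformation $f$ was constructed relatively over $\MCcdga_k$ and is therefore already compatible with base change $A \to B$; combined with the naturality in $A$ of the equivalence $\dgLie_A \simeq \sifted(\dgLieLib_A)$ from \autoref{commute-dglie}, the $\Psi_A$'s assemble into a natural transformation of functors $\int \dgLie\op \to \monoidalinftyCatu V$. Passing to Cartesian sections over $\quot{\dAff_k}{X}$ and taking limits in the target then yields the global $\Psi$. I expect this assembly step to be the main technical hurdle: because the $\Psi_A$'s are characterized by universal properties rather than an explicit formula, verifying coherent compatibility demands a relative Kan extension argument in the spirit of \autoref{commute-dglie} and \autoref{derivedcat-adjunction}.

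Finally, given $L \in \dgLie_X$ one has the iterated-limit descriptions
\[
\Lqcoh^X(\formal_X L) \simeq \lim_{\Spec A \to X} \, \lim_{L' \to L_A, \, L' \in \dgLieLib_A} \dgMod_{\coho_A L'}, \qquad \dgRep_X(L) \simeq \lim_{\Spec A \to X} \, \lim_{L' \to L_A} \dgRep_A(L'),
\]
with $\Psi(L)$ the limit of the individual functors $f_{L'}^A$. By \autoref{derivedcat-ff-local} each $f_{L'}^A$ is fully faithful (since $L' \in \dgLieLib_A \subset \dgLieGood_A$), and by \autoref{derivedcat-local} it is a left adjoint, hence colimit-preserving. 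Both properties are preserved under small limits in $\PresLeftu V$ (limits of fully faithful morphisms are fully faithful, and colimits in a limit of presentable categories are computed pointwise), so $\Psi(L)$ inherits them, which concludes the sketch.
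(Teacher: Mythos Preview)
Your proposal is correct and follows essentially the same strategy as the paper: restrict to $\dgLieLib$, use the monoidal transformation $f \colon \dgMod_{\coho(-)} \to \dgRep$ from \autoref{derivedcat-adjunction}, extend by a right-Kan-extension/sifted-limit argument (the paper cites \cite[2.4.32]{lurie:dagx} for the fact that $\dgRep_A$ takes sifted colimits of Lie algebras to limits, which is exactly your $\Envel_A$-argument), and then pass to limits over $\quot{\dAff_k}{X}$; your final paragraph, where full faithfulness and colimit preservation are inherited under limits of presentable categories, is precisely what the paper's one-line ``We deduce the result from \autoref{lqcohformaltodgrep}'' is implicitly invoking.

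The one organizational difference worth noting resolves the ``main technical hurdle'' you flag. Rather than building $\Psi_A$ for each $A$ separately and then assembling the family into a coherent transformation over $\cdga_k$, the paper works globally from the outset: it defines $\Ee \subset \int \dgLie\op$ as the full subcategory classified by $A \mapsto (\dgLieLib_A)\op$, observes that $\phi := \Lqcoh \circ \formal$ is already the right Kan extension of its restriction to $\Ee$ (by construction of $\Lqcoh$ as a left Kan extension into $(\monoidalinftyCatu V)\op$), and uses that the transformation $f$ of \autoref{derivedcat-adjunction} was built relatively over $\int \MCdgLie\op$. This packages all the base-change coherences into the single relative statement of \autoref{derivedcat-adjunction}, so no further assembly step is needed. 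Your approach would work too, but the relative Kan extension you anticipate is exactly what the paper's lemma \autoref{lqcohformaltodgrep} carries out in one move.
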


To prove this proposition, we will need the following

\begin{lem}\label{lqcohformaltodgrep}
The natural transformation $\formal \colon \dgLie \to \dStF$ and the functor $\Lqcoh \colon \oint \dStF \to \left(\monoidalinftyCatu V\right)\op$ define a composite functor
\[
\phi \colon 
\mymatrix{
\int \dgLie\op \ar[r]^-{\int \formal} & \left(\oint \dStF \right)\op \ar[r]^-{\Lqcoh} & \monoidalinftyCatu V
}
\]
There is a pointwise fully faithful and colimit preserving natural transformation $\phi \to \dgRep$.
\end{lem}
\begin{proof}
Let $\Ee$ denote the full subcategory of $\int \dgLie\op$ such that the induced coCartesian fibration $\Ee \to \cdga_k$ is classified by the subfunctor
\[
A \mapsto \left(\dgLieLib_A\right)\op \subset \dgLie_A\op
\]
The functor $\phi$ is by construction the right Kan extension of its restriction $\psi$ to $\Ee$.
Moreover, the restriction $\psi$ is by definition equivalent to the composite functor
\[
\mymatrix{
\Ee \ar[r] & \int \dgLie\op \ar[rr]^-{\dgMod_{\coho(-)}} && \monoidalinftyCatu V
}
\]
Using the natural transformation $\dgMod_{\coho(-)} \to \dgRep$ from \autoref{derivedcat-adjunction}, we get
\[
\alpha \colon \psi \to \dgRep_{|\Ee} \in \Fct\left(\Ee, \monoidalinftyCatu V\right)
\]
We will prove the following sufficient conditions.
\begin{enumerate}
\item The functor $\dgRep \colon \int \dgLie\op \to \monoidalinftyCatu V$ is the right Kan extension of its restriction $\dgRep_{|\Ee}$\label{nat-kanext}
\item The natural transformation $\alpha$ is pointwise fully faithful and preserves finite colimits.\label{alphaff}
\end{enumerate}
Condition \ref{alphaff} simply follows from \autoref{derivedcat-ff-local}.
To prove condition \ref{nat-kanext}, it suffices to see that when $A$ is fixed, the functor
\[
\dgRep_A \colon \dgLie_A\op \to \monoidalinftyCatu V
\]
commutes with sifted limits. This follows from \cite[2.4.32]{lurie:dagx}.
\end{proof}

\begin{proof}[of \autoref{ffformallierep}]
Let $X$ be a derived stack and $L$ be a dg-Lie algebra over $X$. Recall that we can see $L$ as a functor
\[
L \colon \left(\quot{\dAff_k}{X}\right)\op \to \int \dgLie\op
\]
By definition, we have $\Lqcoh^X(\formal_X L)$ is the limit of the diagram
\[
\mymatrix{
\left( \quot{\dAff_k}{X}\right)\op \ar[r]^-L & \int \dgLie\op \ar[r]^-{\int \formal} & \left(\oint \dStF\right)\op \ar[r]^-{\Lqcoh} & \inftyCatu V
}
\] 
while $\dgRep_X(L)$ is the limit of
\[
\mymatrix{
\left( \quot{\dAff_k}{X}\right)\op \ar[r]^-L & \int \dgLie\op \ar[r]^-{\dgRep} & \inftyCatu V
}
\]
We then deduce the result from \autoref{lqcohformaltodgrep}, since a limit of fully faithful (resp. colimit preserving) functor is so.
\end{proof}

We can now prove the promised \autoref{derived-global}.
\begin{proof}[of \autoref{derived-global}]
Let us first build a functor
\[
\nu_X \colon \Qcoh(X) \to \Lqcoh^X\left(\for{(X \times X)}\right)
\]
Let us denote by $\Cc$ the $(\infty,1)$-category of diagrams
\[
\mymatrix{
\Spec A \ar[r] & \Spec B \ar[d]^\alpha \ar[r] & \Spec A \\ & X &
}
\]
where $A \in \cdga_k$ and $B \in \dgExt_A$. There is a natural functor $\Cc\op \to \monoidalinftyCatu V$ mapping a diagram as above to the monoidal $(\infty,1)$-category $\dgMod_B$.
Unwinding the definitions, we contemplate an equivalence of monoidal categories (recall (\ref{descriptionlqcoh}) from \autoref{lqcohdstf})
\[
\Lqcoh^X\left(\for{(X \times X)}\right) \simeq \lim_{\Cc} \dgMod_B
\]
The maps $\alpha$ as above induce (obviously compatible) pullback functors $\alpha^* \colon \Qcoh(X) \to \dgMod_B$. This construction defines the announced monoidal functor
\[
\nu_X \colon \Qcoh(X) \to \Lqcoh^X\left( \for{(X \times X)} \right)
\]
Going back to \autoref{tgtliedef} and \autoref{dstfoverstack}, we have an adjunction co-unit
\[
\theta \colon \formal_X(\tgtlie_X) = \formal_X \lie_X\left( \for{(X \times X)} \right) \to \for{(X \times X)}
\]
Now using the functor from \autoref{ffformallierep}, we get a composite functor
\[
\lierep_X \colon 
\mymatrix{
\Qcoh(X) \ar[r]^-{\nu_X} & \Lqcoh^X\left(\for{(X \times X)}\right) \ar[r]^-{\theta^*} & \Lqcoh^X(\formal_X(\tgtlie_X)) \ar[r]^-\Psi & \dgRep_X(\tgtlie_X)
}
\]
As every one of those functors is both monoidal and colimit preserving, so is $\lierep_X$.
We still have to prove that $\lierep_X$ is a retract of the forgetful functor $\Theta_X \colon \dgRep_X(\tgtlie_X) \to \Qcoh(X)$.
We consider the composite functor $\Theta_X \lierep_X$
\[
\mymatrix{
\Qcoh(X) \ar[r]^-{\nu_X} & \Lqcoh^X\left(\for{(X \times X)}\right) \ar[r]^-{\theta^*} & \Lqcoh^X(\formal_X(\tgtlie_X)) \ar[r]^-\Psi & \dgRep_X(\tgtlie_X) \ar[r]^-{\Theta_X} & \Qcoh(X)
}
\]
Unwinding the definitions, we see that $\Theta \Psi$ is the following limit
\[
\mymatrix{
\displaystyle \Lqcoh^X(\formal_X(\tgtlie_X)) = \lim_{u \colon \Spec A \to X} \underset{L \in \dgLieLib_A}{ \lim_{L \to u^* \tgtlie_X}} \dgMod_{\coho_A L} \phantom{\Lqcoh^X(\formal_X(\tgtlie_X)) =}
\ar[d]^-f \\
\displaystyle \dgRep_X(\tgtlie_X) = \lim_{u \colon \Spec A \to X} \underset{L \in \dgLieLib_A}{ \lim_{L \to u^* \tgtlie_X}} \dgRep_A(L)
\phantom{\dgRep_X(\tgtlie_X) =} \ar[d]^-\pi \\
\displaystyle \phantom{\Qcoh(X) = }\lim_{u \colon \Spec A \to X} \dgMod_A = \Qcoh(X)
}
\]
where $f$ and $\pi$ are induced by the natural transformation of \autoref{cocartesianretract}. The composite functor $\pi f$ is equivalent to the pullback
\[
\Lqcoh^X(\formal_X (\tgtlie_X)) \to \Lqcoh^X(X) \simeq \Qcoh(X)
\]
along the unique morphism $X \to \formal_X (\tgtlie_X)$ of formal stacks over $X$ (seen as a formal stack, $X$ is initial in $\dStF_X$. This map can be described as a colimit of augmentation maps $\coho_A L \to A$).
It follows that $\Theta_X \lierep_X$ is equivalent to the composite functor
\[
\mymatrix{
\Qcoh(X) \ar[r]^-{\nu_X} & \Lqcoh^X\left(\for{(X \times X)}\right) \ar[r]^-{\alpha^*} & \Qcoh(X)
}
\]
where $\alpha$ is the morphism $X \to \for{(X \times X)}$. Unwinding the definition of $\nu_X$, we see that this composite functor is equivalent to the identity functor of $\Qcoh(X)$.
\end{proof}

\subsection{Atiyah class, modules and tangent maps}

\newcommand{\Perfdst}{\underline \Perf}
\begin{df}
Let $\Perfdst$ denote the derived stack of perfect complexes. It is defined as the stack mapping a cdga $A$ to the maximal $\infty$-groupoid in the $(\infty,1)$-category $\Perf(A)$.
For any derived stack $X$, we set $\Perfdst(X)$ to be the maximal groupoid in $\Perf(X)$. It is equivalent to space of morphisms from $X$ to $\Perfdst$ in $\dSt_k$.
\end{df}

\begin{df}\label{atiyah-def}
Let $X$ be a derived Artin stack locally of finite presentation. Any perfect module $E$ over $X$ is classified by a map $\phi_E \colon X \to \Perfdst$. Following \cite{toen:derivedK3}, we define the Atiyah class of $E$ as the tangent morphism of $\phi_E$
\[
\atiyah_E \colon \T_X \to \phi_E^* \T_{\Perfdst}
\]
\end{df}

\begin{rmq}
We will provide an equivalence $\phi_E^* \T_{\Perfdst} \simeq \End(E)[1]$ in the proof of \autoref{derivedcat-atiyah}.
The Atiyah class of $E$ should be thought as the composition
\[
\atiyah_E \colon \T_X[-1] \to \phi_E^* \T_{\Perfdst}[-1] \simeq \End(E)
\]
We will, at the end of this section, compare this definition of the Atiyah class with the usual one -- see \autoref{atiyah-compare}.
\end{rmq}

\begin{prop}\label{derivedcat-atiyah}
Let $X$ be an algebraic stack locally of finite presentation.
When $E$ is a perfect module over $X$, then the $\tgtlie_X$-action on $E$ given by the \autoref{derived-global} is induced by the Atiyah class of $E$.
\end{prop}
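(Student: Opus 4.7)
The plan is to exploit naturality with respect to the classifying map $\phi_E \colon X \to \Perfdst$. Since $E$ is by construction the pullback $\phi_E^{*} E_{\mathrm{univ}}$ of the universal perfect complex, and since \autoref{tangent-lie} furnishes a canonical morphism $\tgtlie_X \to \phi_E^{*} \tgtlie_{\Perfdst}$ whose underlying map of quasi-coherent sheaves is the tangent map $\T_X \to \phi_E^{*} \T_{\Perfdst}$ shifted by $-1$, the strategy is to reduce the identification to the universal case over $\Perfdst$, and then to read off the underlying module map of the resulting action as the tangent map of $\phi_E$, which is by \autoref{atiyah-def} the Atiyah class.

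The first step is to establish a naturality statement for $\lierep$: for any morphism $f \colon X \to Y$ between derived Artin stacks locally of finite presentation, and any $M \in \Qcoh(Y)$, there is a natural equivalence
\[
f^{*}\, \lierep_Y(M) \;\simeq\; \lierep_X(f^{*} M)
\]
in $\dgRep_X(\tgtlie_X)$, where the left-hand side is endowed with the $\tgtlie_X$-action inherited through $\tgtlie_X \to f^{*} \tgtlie_Y$. This is obtained by upgrading each of the three pieces in the construction of $\lierep_X$ given in the proof of \autoref{derived-global} -- the monoidal functor $\nu_X$, the pullback $\theta^{*}$ along the canonical map $\formal_X(\tgtlie_X) \to \for{(X \times X)}$, and the fully faithful functor $\Psi$ from \autoref{ffformallierep} -- to natural transformations between functors on a suitable category of pairs $(X,E)$. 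The functor $\nu_X$ is manifestly natural in $X$, while naturality of $\theta^{*}$ and $\Psi$ reduce to the fact, already built in the proof of \autoref{tangent-lie}, that $\for{(-)}$ and $\formal$ (though not $\lie$) are compatible with pullback.

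The second step is to compute $\phi_E^{*} \T_{\Perfdst}$. Standard deformation theory of the moduli of perfect complexes gives $\T_{\Perfdst,[E]} \simeq \operatorname{End}(E)[1]$: a first-order deformation of $E$ corresponds to a morphism $\Spec A[\varepsilon] \to \Perfdst$ extending $\Spec A \to \Perfdst$, which by the universal property of $\Perfdst$ is classified by a class in $\pi_0 \Homint_A(E, E[1])$. This yields the equivalence $\phi_E^{*} \T_{\Perfdst} \simeq \operatorname{End}(E)[1]$ advertised in the remark, and under this identification the tangent map of $\phi_E$ is precisely the Atiyah class $\atiyah_E$.

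Combining these two steps, the naturality applied to $f = \phi_E$ and $M = E_{\mathrm{univ}}$ presents the $\tgtlie_X$-action on $E$ as the pullback of the universal $\tgtlie_{\Perfdst}$-action on $E_{\mathrm{univ}}$ along the tangent morphism, and on underlying modules this morphism is exactly the Atiyah class up to the shift. The main obstacle is the naturality step: since $\lie_X$ does not commute with base change (see the remark following \autoref{dstfoverstack}), one must take care to only exploit the weaker naturality of $\lierep$ that passes through the lax-functoriality of $\lie$, which is precisely what produces the structural morphism $\tgtlie_X \to f^{*} \tgtlie_Y$ in the first place. Packaging this coherently will likely require organizing the constructions of the previous subsection into a single natural transformation between coCartesian fibrations over $\dStArtlfp$, in the spirit of the construction of $\formal$ in \autoref{formal-nattrans}.
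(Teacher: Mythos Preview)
Your overall strategy---exploiting the classifying map $\phi_E$ and the fact that its tangent morphism is by definition the Atiyah class---is the same as the paper's. But the execution diverges, and there is a genuine gap.

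The paper does \emph{not} set up a general naturality statement for $\lierep$ along arbitrary $f \colon X \to Y$. Instead it works directly on $X$ by identifying what $\phi_E^{*}\tgtlie_{\Perfdst}$ and $\gl(E)$ represent as objects of $\dgLie_X$: the former represents $L \mapsto \mathrm{Gpd}\bigl(\Lqcoh^X(\formal_X L)\times_{\Qcoh(X)}\{E\}\bigr)$, the latter $L \mapsto \mathrm{Gpd}\bigl(\dgRep_X(L)\times_{\Qcoh(X)}\{E\}\bigr)$. The transformation $\Psi$ of \autoref{ffformallierep} then gives a comparison map $\phi_E^{*}\tgtlie_{\Perfdst} \to \gl(E)$, and the heart of the proof is a separate lemma (\autoref{perfequiv}) showing that for $L \in \dgLieLib_A$ the functor $f_L^A$ restricts to an equivalence $\Perf(\coho_A L)\simeq \dgRep_A(L)\times_{\dgMod_A}\Perf(A)$. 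Testing affine-locally, this makes the comparison map an equivalence, so the Lie map $\atiyah_E \colon \tgtlie_X \to \phi_E^{*}\tgtlie_{\Perfdst}\simeq \gl(E)$ is exactly the action produced by $\lierep_X$.

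Your reduction to the ``universal case'' over $\Perfdst$ does not bypass this step; it merely relocates it. Even granting full naturality of $\lierep$ (which, as you note, is delicate since $\lie$ does not commute with base change), you still owe the computation that the $\tgtlie_{\Perfdst}$-action on $E_{\mathrm{univ}}$ is the tautological one, i.e.\ that the structure map $\tgtlie_{\Perfdst} \to \gl(E_{\mathrm{univ}})$ agrees with the identification coming from your Step~2. You do not address this, and it is precisely the content of \autoref{perfequiv}. So the naturality machinery you propose is heavier than what the paper uses, while the one substantive computation---that $\Psi$ identifies perfect $\coho_A L$-modules with $L$-representations on perfect $A$-modules---is missing from your outline.
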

\begin{lem}\label{perfequiv}
Let $A \in \cdga_k$ and $L \in \dgLieLib_A$. The functor
\[
f_L^A \colon \dgMod_{\coho_A(L)} \to \dgRep_A(L)
\]
defined in \autoref{derivedcat-local} induces an equivalence 
\[
\Perf(\coho_A L) \to^\sim \dgRep_A(L) \timesunder[\dgMod_A] \Perf(A)
\]
\end{lem}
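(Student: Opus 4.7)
First, since every object of $\dgLieLib_A$ is good (by the inclusion $\dgLieLib_A \subset \dgLieGood_A$ recorded in the paper), \autoref{derivedcat-ff-local} directly yields that $f_L^A \colon \dgMod_{\coho_A L} \to \dgRep_A(L)$ is fully faithful. I would then check that $f_L^A$ restricts to a functor $\Perf(\coho_A L) \to \Cc$, where $\Cc := \dgRep_A(L) \times_{\dgMod_A} \Perf(A)$: by \autoref{cocartesianretract} applied at the identity of $(A,L)$, the composite of $f_L^A$ with the forgetful functor $\dgRep_A(L) \to \dgMod_A$ is equivalent to base change along the augmentation $\coho_A L \to A$. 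Hence the underlying $A$-module of $f_L^A(V)$ is $V \otimes_{\coho_A L} A$, which lies in $\Perf(A)$ whenever $V \in \Perf(\coho_A L)$. Let $F \colon \Perf(\coho_A L) \to \Cc$ denote the resulting fully faithful exact functor.

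Since $F$ is exact and fully faithful between idempotent-complete stable $(\infty,1)$-categories, its essential image coincides with the thick closure in $\dgRep_A(L)$ of $F(\coho_A L) = \Envel_A(A[\eta] \otimes_A L)$. By the PBW theorem \autoref{pbw} and contractibility of $A[\eta] \otimes_A L$, this latter object is quasi-isomorphic as an $A$-module to $A$ and carries the trivial $L$-action. Essential surjectivity of $F$ thus reduces to proving that $\Cc$ equals the thick closure of the trivial representation $A$ in $\dgRep_A(L)$.

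For this last reduction I would use the explicit presentation $L = \libre_A(V)$, so that $\Envel_A L$ is a tensor algebra on $V$ and $A$ admits the length-two free resolution $\Envel_A L \otimes_A V \to \Envel_A L \to A$ as a left $\Envel_A L$-module. Tensoring with any $M \in \Cc$ produces a fiber sequence in $\dgRep_A(L)$
\[
V \otimes_A M \to M \to \iota\bigl(A \otimes^{\mathrm{L}}_{\Envel_A L} M\bigr),
\]
where $\iota \colon \dgMod_A \to \dgRep_A(L)$ is the trivial-action embedding and the cofiber $A \otimes^{\mathrm{L}}_{\Envel_A L} M \simeq \operatorname{cofib}(V \otimes_A M \to M)$ lies in $\Perf(A)$. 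Iterating this fiber sequence, with $M$ replaced by $V \otimes_A M$ (still in $\Cc$, since $V$ is finite free), exhibits $M$ as an iterated extension of trivial representations on perfect $A$-modules, each of which is in the thick closure of $A$. Because $V$ is concentrated in positive cohomological degrees and $M$ is cohomologically bounded, the iterates $V^{\otimes n} \otimes_A M$ become cohomologically trivial in any fixed degree once $n$ is sufficiently large; this degreewise-finite convergence places $M$ in the required thick closure. The main technical subtlety, which I would handle by induction on the cohomological amplitude of $M$, is to make this degreewise convergence rigorous at the $(\infty,1)$-categorical level.
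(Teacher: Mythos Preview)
Your setup is sound: full faithfulness via \autoref{derivedcat-ff-local}, the identification of the underlying $A$-module of $f_L^A(V)$ with $V\otimes_{\coho_A L}A$ via \autoref{cocartesianretract}, and the reduction of essential surjectivity to showing that every $M\in\Cc$ lies in the thick closure of the trivial representation $A$ are all correct. The fiber sequence you write also makes sense once one interprets $V\otimes_A M$ as the fiber of the unit $M\to\iota\bigl(A\otimes^{\mathrm L}_{\Envel_A L}M\bigr)$ with its \emph{induced} $L$-action; the naive action (through $M$ alone) would not make the action map equivariant, but the fiber in $\dgRep_A(L)$ does have underlying $A$-module $V\otimes_A M$.

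The genuine gap is in the termination argument. Iterating your fiber sequence produces a tower $\dots\to F_2\to F_1\to F_0=M$ with $F_n$ having underlying module $V^{\otimes n}\otimes_A M$; these objects are never zero, only pushed into higher and higher degrees. Degreewise vanishing does not by itself place $M$ in a \emph{thick} subcategory, which is closed only under finite extensions. Your proposed induction on the cohomological amplitude of $M$ does not help here: if $V$ sits in degrees $[1,c]$ then $V\otimes_A M$ has amplitude at least that of $M$ (and strictly larger when $c>1$), so the inductive hypothesis never applies to the fiber. A correct direct argument would instead choose a strict bounded model for $M$ and use the \emph{stupid} truncation: since $V$ lives in strictly positive degrees, the top graded piece $M^b[-b]$ is a sub-$L$-representation with trivial action, and the quotient $\sigma^{<b}M$ has strictly smaller length, so a finite induction finishes.

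For comparison, the paper does not run this explicit nilpotence argument at all. It first observes that representations whose underlying module is $A^n$ lie in the image $\Cc$---this is obtained from Lurie's result over the base field together with the base-change compatibility of \autoref{derivedcat-adjunction}---and then closes $\Cc$ under pushouts and fibre products to reach all perfect underlying complexes. Your approach is more elementary in spirit and avoids the black-box citation, but as written the convergence step does not go through; replacing the Koszul-type iteration by the stupid-truncation induction above would make it work.
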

\begin{proof}
We proved in \autoref{derivedcat-ff-local} the functor $f_L^A$ to be fully faithful.
Let us denote by $\Cc$ the image category
\[
\Cc = f_L^A\left(\Perf(\coho_A(L))\right)
\]
Since $f_L^A$ is exact, the category $\Cc$ is stable by shifts in $\dgRep_A(L)$.
Let us first prove that $\Cc$ contains any representation whose underlying dg-module is projective of finite type (ie a retract of some $A^n$).
Let $P$ be a projective of finite type dg-module over $A$. An action of $L$ on $P$ amounts a morphism $\kappa \colon M \to \End(P)$, where $M \in \dgModLib_A$ such that $L = \libre_A M$. Such a map corresponds to a choice of finitely many elements in $\End(P)$ of positive cohomological degree.
The cdga $A$ is by assumption cohomologically concentrated in non-positive degree. So is $\End(P)$, as a projective dg-module over $A$.
The map $\kappa$ is hence (non canonically) homotopic to $0$. Every $L$-action on $P$ is trivial. Moreover, the trivial action on $P$ is given by $f_L^A(\coho_A L \otimes_A P)$.

Let now $d \in \N$. Let us assume that any $F \in \dgRep_A(L)$ whose underlying dg-module is perfect and of tor-amplitude contained in $[-d,0]$ belongs to $\Cc$. Recall that the case $d=0$ is the projective case (see \cite[2.22]{toen:ttt}).
Let $E \in \dgRep_A(L)$ be a representation. We assume its underlying dg-module $\underline E$ is perfect of tor-amplitude contained in $[-d-1,0]$.
Using \loccit, there exists an exact sequence $N \to \underline E \to F$ where $N$ is a projective $A$-dg-module of finite type and $Q$ is a perfect complex of tor-amplitude contained in $[-d-1,-1]$.
We will build a lift $N \to E$ in $\dgRep_A(L)$ of the map $N \to \underline E$.
The $L$-action on $E$ is given by a morphism $M \to \End(\underline E)$ where $M \in \dgModLib_A$ such that $L = \libre_A(M)$. A lift $N \to E$ of $N \to \underline E$ is equivalent to an homotopy $\alpha$ making the following diagram commutative
\[
\mymatrix{
M \ar[r] \ar[d]_0 & \End(\underline E) \ar[d]\\ \End(N) \ar@{<=>}[ur]_\alpha \ar[r] & \Homint(N,\underline E)
}
\]
The module $N$ is a retract of  $A^n$ for some $n$. It follows that $\Homint(N,\underline E)$ is a retract of $\underline E^n \simeq \Homint(A^n,\underline E)$. The dg-module $\Homint(N,\underline E)$ is thus cohomologically concentrated in non-positive degree. Since $M$ is generated by objects in positive degree, the mapping space $\Map_A(M,\Homint(N,\underline E))$ is connected and an homotopy $\alpha$ as above exists (but is not uniquely determined).
We obtain from what precedes a map $\beta \colon N \to E$ in $\dgRep_A(L)$ lifting $N \to \underline E$. Let $F$ denote the cofibre of $\beta$ in $\dgRep_A(L)$. Since the forgetful functor $\dgRep_A(L) \to \dgMod_A$ is exact, the underline dg-module of $F$ is equivalent to $Q$ and hence of tor-amplitude contained in $[-d-1,-1]$. By assumption, the representation $Q[-1]$ belongs to $\Cc$.
Since $\Cc$ is stable by shifts, we have $Q \in \Cc$. The category $\Cc$ is also stable by extensions and we have $E \in \Cc$.
By induction, we get that every representation of $L$ whose underlying module is perfect belongs to $\Cc$. Reciprocally, any representation of $\Cc$ has a perfect underlying complex.
\end{proof}

\newcommand{\Lpe}{\mathrm{L}_{\mathrm{pe}}}
\begin{df}
Let $X$ be a derived stack and let $Y$ be a formal stack over $X$. 
We define the full subcategory $\Lpe^X(Y)$ of $\Lqcoh^X(Y)$
\[
\mymatrix{
\displaystyle \Lpe^X(Y) = \lim_{\Spec A \to X} ~ \underset{\Spec B \to Y_A}{\lim_{B \in \dgExt_A}} \Perf_B \ar[r] &
\displaystyle \lim_{\Spec A \to X} ~ \underset{\Spec B \to Y_A}{\lim_{B \in \dgExt_A}} \dgMod_B \simeq \Lqcoh^X(Y) 
}
\]
\end{df}

\begin{proof}[of \autoref{derivedcat-atiyah}]
The sheaf $E$ corresponds to a morphism $\phi_E \colon X \to \Perfdst$. Its Atiyah class is the tangent morphism
\[
\atiyah_E \colon \T_X[-1] \to \phi_E^* \T_{\Perfdst}[-1]
\]
In our setting, we get a Lie tangent map (\autoref{tangent-lie})
\[
\atiyah_E \colon \tgtlie_X \to \phi_E^* \tgtlie_{\Perfdst}
\]
Using \autoref{rmq-tgtliebasechange}, we get an equivalence $\phi_E^* \tgtlie_{\Perfdst} \simeq \lie_X( \for{X \times \Perfdst)})$. The dg-Lie algebra $\phi_E^* \tgtlie_{\Perfdst}$ hence represents the presheaf on $\dgLie_X$
\begin{align*}
\Map\left(-,\lie_X\left(\for{(X \times \Perfdst)}\right)\right)
& \simeq \Map\left(\formal_X(-),\for{(X \times \Perfdst)}\right) \\
& \simeq \lim_{u \colon \Spec A \to X} ~ \underset{L \to u^*(-)}{\lim_{L \in \dgLieLib_A}} \Map\left( \Spec(\coho_A L), \for{(\Spec A \times \Perfdst)}\right) \\
& \simeq \lim_{u \colon \Spec A \to X} ~ \underset{L \to u^*(-)}{\lim_{L \in \dgLieLib_A}} \Perfdst(\coho_A L) \times_{\Perfdst(A)} \{u^* E\} \\
& \simeq \mathrm{Gpd}\left(\Lpe^X(\formal_X(-)) \timesunder[\Perf(X)] \{E\}\right)
\end{align*}
where $\mathrm{Gpd}$ associates to any $(\infty,1)$-category its maximal groupoid. Note that the equivalence between the second and third lines follows from \autoref{describeformalcompletion}.
On the other hand $\gl(E)$ -- the dg-Lie algebra of endomorphisms of $E$ -- represents the functor
\[
\mathrm{Gpd}\left(\dgRep_X(-) \timesunder[\Qcoh(X)] \{E\}\right)
\]
We get from \autoref{ffformallierep} a morphism $\phi_E^* \tgtlie_{\Perfdst} \to \gl(E)$ of dg-Lie algebras over $X$.
Restricting to an affine derived scheme $s \colon \Spec A \to X$, we get that $s^* \phi_E^* \tgtlie_{\Perfdst}$ and $s^* \gl(E) \simeq \gl(s^* E)$ respectively represent the functors $(\dgLieLib_A)\op \to \sSets$
\[
L^0 \mapsto \Perfdst(\coho_A L^0) \timesunder[\Perfdst(A)] \{E\} \text{~~~and~~~} 
L^0 \mapsto \mathrm{Gpd}\left(\dgRep_A(L^0) \timesunder[\dgMod_A] \{ E \}\right)
\]
The natural transformation induced between those functors is the one of \autoref{perfequiv} and is thus an equivalence.
We therefore have
\[
\atiyah_E \colon \tgtlie_X \to \gl(E)
\]
and hence an action of $\tgtlie_X$ on $E$.
This construction corresponds to the one of \autoref{derived-global} through the equivalence
$\Perfdst(X) \simeq \Map(X,\Perfdst)$.
\end{proof}

We will now focus on comparing our \autoref{atiyah-def} of the Atiyah class with a more usual one. Let $X$ be a smooth variety.
Let us denote by $X^{(2)}$ the infinitesimal neighbourhood of $X$ in $X \times X$ through the diagonal embedding.
We will also denote by $i$ the diagonal embedding $X \to X^{(2)}$ and by $p$ and $q$ the two projections $X^{(2)} \to X$.
We have an exact sequence
\begin{equation}
i_* \Lcot_X \to \Oo_{X^{(2)}} \to i_* \Oo_X \label{exactseq-atiyah}
\end{equation}
classified by a morphism $\alpha \colon i_* \Oo_X \to i_* \Lcot_X[1]$. The Atiyah class of a quasi-coherent sheaf $E$ is usually obtained from this extension class by considering the induced map -- see for instance \cite{kuznetsovmarkushevich:sympandatiyah}
\begin{equation}
E \simeq p_*(i_* \Oo_X \otimes q^* E) \to p_*(i_* \Lcot_X[1] \otimes q^*E) \simeq \Lcot_X[1] \otimes E \label{defatiyahold}
\end{equation}
From the map $\alpha$, we get a morphism $i^* i_* \Oo_X \to \Lcot_X[1]$. Dualising we get
\[
\beta \colon \T_X[-1] \to \Homint_{\Oo_X}(i^* i_* \Oo_X, \Oo_X) \simeq p_* i_* \Homint_{\Oo_X}(i^* i_* \Oo_X, \Oo_X) \simeq p_* \Homint_{\Oo_{X^{(2)}}}(i_* \Oo_X, i_* \Oo_X)
\]
The right hand side naturally acts on the functor $i^* \simeq p_*(- \otimes_{\Oo_{X^{(2)}}} i_* \Oo_X)$ and hence on $i^* q^* \simeq \id$. This action, together with the map $\beta$, associates to any perfect module $E$ a morphism $\T_X[-1] \otimes E \to E$.
It corresponds to a map $E \to E \otimes \Lcot_X[1]$ which is equivalent to the Atiyah class in the sense of (\ref{defatiyahold}).

\begin{prop}\label{atiyah-compare}
Let $X$ be a smooth algebraic variety and let $E$ be a perfect complex on $X$. The Atiyah class of $E$ in the sense of \autoref{atiyah-def} and the construction (\ref{defatiyahold}) are equivalent to one another.
\end{prop}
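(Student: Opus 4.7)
The plan is to show that both Atiyah classes arise, after unravelling, as the map $\T_X[-1] \to \End(E)$ classifying the obstruction to extending $E$ along the first-order thickening of the diagonal, and that the two identifications of this classifying map agree. Since the statements and both constructions are local on $X$, the first step is to reduce to the affine case $X = \Spec A$, using that the new Atiyah class is defined through the pullback-compatible functor $\lierep_X$ of \autoref{derived-global} and that the classical Atiyah class is built from the short exact sequence \eqref{exactseq-atiyah} which also pulls back in the obvious way.

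Now assume $X = \Spec A$ is smooth affine. Through \autoref{perfequiv} and the proof of \autoref{derivedcat-atiyah}, the new Atiyah class is the morphism $\tgtlie_X \to \gl(E)$ characterised by the following universal property: for every $L^0 \in \dgLieLib_A$, the space of dotted fillers
\[
\mymatrix{
\Spec(\coho_A L^0) \ar[r] \ar[d] & \Perfdst \\ X \ar@{-->}[ur] &
}
\]
classifies $L^0$-representations with underlying module $E$. Concretely, using \autoref{ext-coho} we may take $L^0 = \libre(A^p[-n])$, so $\coho_A L^0 \simeq A \oplus \dual{(A^p)}[-n-1]$ is a trivial square-zero extension, and the space above is the space of first-order deformations of $E$ along the corresponding derivation $\Der(A,A^p[-n-1]) \simeq \Map_{\dgMod_A}(\Lcot_X, A^p[-n-1])$. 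Thus the new Atiyah class is exactly the map classifying, for each trivial square-zero extension $A \oplus M$ of $A$, the obstruction of $E$ to extend along $\Spec(A \oplus M) \to X$; equivalently, it is the image of the universal extension class $\mathrm{id} \in \Map(\Lcot_X, \Lcot_X)$ under an action functor $\Map(\Lcot_X, M) \to \Map(E, E \otimes M[1])$.

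The final step is to identify this universal derivation with the extension $\alpha$ of \eqref{exactseq-atiyah}, or more precisely with its pullback $i^*\alpha \colon i^*i_*\Oo_X \to \Lcot_X[1]$: indeed $i^*\alpha$ is by construction the universal first-order derivation of $A$ valued in $\Lcot_X[1]$, which classifies the first-order diagonal thickening $X^{(2)}$ as an object of $\for{(X\times X)}(A \oplus \Lcot_X[1])$. On the classical side, the morphism $\beta \colon \T_X[-1] \to p_*\Homint_{\Oo_{X^{(2)}}}(i_*\Oo_X,i_*\Oo_X)$ acts on $i^*q^* \simeq \id$ and applied to $E$ gives exactly the obstruction to extending $q^*E$ to an $\Oo_{X^{(2)}}$-module, which is the same obstruction computed above via the formal completion. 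Matching these two descriptions of the same obstruction-classifying map yields the claimed equivalence. The main subtlety, and in my view the principal obstacle, is keeping the dualities, shifts and the Chevalley vs. derivation differentials coherent when passing between the Lie-theoretic picture $\tgtlie_X \to \gl(E)$ and the coherent picture $E \to E \otimes \Lcot_X[1]$; this is essentially bookkeeping once the universal derivation is identified, but it is precisely where a careless sign convention would break the argument.
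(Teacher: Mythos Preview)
Your strategy is plausible but diverges from the paper's route, and the place where you write ``this is essentially bookkeeping'' is precisely where the paper does the real work.

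The paper does not argue abstractly via obstruction theory. Its proof rests on two structural observations specific to the smooth case: first, $\Lqcoh^X(\for{(X^{(2)})}) \simeq \Qcoh(X^{(2)})$; second, and crucially, $\lie_X(\for{(X^{(2)})})$ is the \emph{free} Lie algebra $\libre_X(\T_X[-1])$. This lets the author factor $\lierep_X$ through $\Qcoh(X^{(2)}) \to \dgRep_X(\libre_X(\T_X[-1]))$, and then compare directly with the classical Atiyah class, which is already phrased in terms of $X^{(2)}$. After reducing to an affine chart $a\colon \Spec A \to X$, the paper writes $L=\libre_A(\T_{X,a}[-1])$, uses the explicit Chevalley--Eilenberg model $\Envel_A(L\otimes_A A[\eta])$ for $f^A_L$, and checks by hand that the universal extension class $\alpha$ of (\ref{exactseq-atiyah}) is dual to the inclusion $\T_{X,a}[-1] \hookrightarrow \Envel_A(L\otimes_A A[\eta])$. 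That is the whole proof: the freeness of the tangent Lie algebra of $X^{(2)}$ lets you compute both sides in the same explicit model.

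Your sketch instead tries to characterise both classes as ``the map classifying the obstruction to extending $E$''. The issue is that this characterisation is not quite what $\lierep_X$ produces: $\nu_X(E)$ is built by pulling $E$ back along maps $\Spec B \to X$, not by extending $E|_{\Spec A}$ abstractly. Showing that these two descriptions agree, and that the classical $\beta$ acts on $i^*q^*E$ in exactly this way, amounts to the same explicit identification the paper carries out. So your ``final step'' is not bookkeeping; it is the content of the proof, and you have not supplied it. If you want to make your approach rigorous, you should at minimum invoke the freeness of $\lie_X(\for{(X^{(2)})})$ to reduce to a single free generator, and then actually match the Chevalley--Eilenberg differential with the extension (\ref{exactseq-atiyah}) as the paper does.
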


\begin{proof}
We first observe that $\for{(X^{(2)})}$ is locally a trivial square zero extension: there exists a covering $a \colon \Spec A \to X$ such that $u^* \for{(X^{(2)})} \simeq \Spec(A \oplus \Lcot_{X,a})$ with $A$ a noetherian ring. As consequences
\begin{itemize} 
\item The derived category $\Lqcoh^X(\for{(X^{(2)})})$ is equivalent to $\Qcoh(X^{(2)})$.
\item The tangent Lie algebra $\lie_X(\for{(X^{(2)})})$ is locally equivalent to the free Lie algebra generated by $\T_X[-1]$ (see \autoref{ext-coho} and \autoref{good-fullyff}).
\end{itemize}
We moreover have a commutative diagram:
\[
\mymatrix{
\Qcoh(X) \ar@/^15pt/[rr]^{\lierep_X} \ar[r] \ar[dr]_{q^*} & \Lqcoh^X\left(\for{(X \times X)}\right) \ar[d]^{u^*} \ar[r] & \dgRep_X(\tgtlie_X) \ar[d]\\
& \Qcoh(X^{(2)}) \ar[r] \ar@/_15pt/[rr]_{i^*} & \dgRep_X\left(\lie_X\left(\for{(X^{(2)})}\right)\right) \ar[r] & \Qcoh(X)
}
\]
where $i \colon X \to X^{(2)}$ is the inclusion and $u$ is the natural morphism $\for{(X^{(2)})} \to \for{(X \times X)}$.

From what precedes, the Atiyah class arises from an action on $i^*$, and we can thus focus on the composite
\[
\mymatrix{
\Qcoh(X^{(2)}) \ar[r] & \dgRep_X\left(\lie_X\left(\for{(X^{(2)})}\right)\right) \ar[r] & \Qcoh(X)
}
\]
which can be studied locally. Let thus $a \colon \Spec A \to X$ be as above.
Let us denote by $L$ the $A$-dg-Lie algebra $\libre_A(\T_{X,a}[-1]) \simeq a^* \lie_X(\for{(X^{(2)})})$. Pulling back on $A$ the functors above, we get
\[
\mymatrix{
\dgMod_{\coho_A L} \ar[r]^{f^A_L} & \dgRep_A(L) \ar[r] & \dgMod_A
}
\]
where $f_L^A$ is given by the action of $L$ on $\Envel_A(L \otimes_A A[\eta])$ through the natural inclusion.
On the other hand, the universal Atiyah class $\alpha$ defined above can be computed as follows
\[
\mymatrix{
\coho_A L \ar[r] \ar[d]_\simeq & \Homint_A(\Envel_A(L \otimes_A A[\eta]),A) \ar[d]^\simeq \\
A \oplus \Lcot_{X,a} \ar[r] \ar[d] & A \oplus \left(\Lcot_{X,a} \otimes_A A[\eta]\right) \simeq A \ar[d]^{a^*(\alpha)} \\
0 \ar[r] & \Lcot_{X,a}[1] \cocart
}
\]
The universal Atiyah class is thus dual to the inclusion $\T_{X,a}[-1] \to \Envel_A(L \otimes_A A[\eta])$.
It follows that the action defined by the functor $f^A_L$ is indeed given by the Atiyah class.
We now conclude using \autoref{derivedcat-atiyah}.
\end{proof}

\subsection{Adjoint represention}
In this subsection, we will focus on the following statement.
\begin{prop}\label{adjointrepresentation}
Let $X$ be a derived Artin stack. The $\tgtlie_X$-module $\lierep_X(\T_X[-1])$ is equivalent to the adjoint representation of $\tgtlie_X$.
\end{prop}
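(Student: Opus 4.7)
The plan is to compare the two $\tgtlie_X$-module structures on $\T_X[-1]$ and show they agree. By \autoref{derived-global}, since $\lierep_X$ is a retract of the forgetful functor, the underlying quasi-coherent sheaf of $\lierep_X(\T_X[-1])$ is $\T_X[-1]$; by \autoref{tangent-lie}, the underlying module of $\tgtlie_X$, and hence of its adjoint representation $\adjrep(\tgtlie_X)$, is also $\T_X[-1]$. It therefore suffices to identify the two Lie actions on this shared underlying sheaf.

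First I would reduce to the affine case. Both the functor $\lierep_X$ (defined via \autoref{dstfoverstack} and \autoref{derived-global}) and the formation of the adjoint representation commute with pullback along smooth morphisms $\Spec A \to X$. Thus one may assume $X = \Spec A$ with $A$ noetherian, so that $\tgtlie_X$ can be represented by a good dg-Lie algebra and $\T_X[-1]$ is perfect.

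Next, apply \autoref{derivedcat-atiyah} with $E = \T_X[-1]$: the $\tgtlie_X$-action on $\lierep_X(\T_X[-1])$ is the one induced by the Atiyah class $\atiyah_{\T_X[-1]}$, viewed as a Lie algebra morphism $\tgtlie_X \to \gl(\T_X[-1])$. Independently, the adjoint representation is classified by the Lie morphism $\tgtlie_X \to \gl(\tgtlie_X) \simeq \gl(\T_X[-1])$ determined by the self-bracket $x \mapsto [x,-]$. The proposition thereby reduces to the claim that these two Lie morphisms coincide. To establish this, one invokes the functoriality of the tangent Lie construction from \autoref{tangent-lie} applied to the classifying map $\phi_{\T_X}\colon X \to \Perfdst$: it yields a Lie morphism $\tgtlie_X \to \phi_{\T_X}^* \tgtlie_{\Perfdst}$, which under the equivalence $\phi_{\T_X}^* \tgtlie_{\Perfdst} \simeq \gl(\T_X)$ used in the proof of \autoref{derivedcat-atiyah} is precisely the Atiyah class. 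On the other hand, unwinding the definition $\tgtlie_X = \lie_X(\for{(X \times X)})$, the bracket on $\tgtlie_X$ encodes the formal composition law on $\for{(X \times X)}$ coming from the projection $p_1$, and its self-action on $\T_X[-1]$ is matched to the Atiyah class action through the universal property of $\Perfdst$ as the classifying stack of perfect complexes.

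The main obstacle is this final identification, which is the derived generalization of Kapranov's theorem that the Lie bracket on $\T_X[-1]$ is encoded by the Atiyah class of the tangent bundle. Concretely, one must rigorously match the two Lie morphisms $\tgtlie_X \to \gl(\T_X[-1])$, which requires careful bookkeeping of shifts, duals, and the various natural equivalences arising from the constructions of $\tgtlie$, $\lierep_X$, and $\atiyah$. Once this identification is in place the proposition follows at once.
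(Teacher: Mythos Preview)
Your proposal has a genuine gap, and it stems from a circularity. You want to compare the two $\tgtlie_X$-actions on $\T_X[-1]$ by showing that both are given by the Atiyah class: the action underlying $\lierep_X(\T_X[-1])$ is the Atiyah class by \autoref{derivedcat-atiyah}, and you then want to identify the adjoint action (the Lie bracket) with the Atiyah class of the tangent complex. But this last identification is precisely what the paper deduces \emph{from} \autoref{adjointrepresentation} coupled with \autoref{derivedcat-atiyah}; it is a corollary, not an input. Your ``main obstacle'' --- matching the two Lie maps $\tgtlie_X \to \gl(\T_X[-1])$ --- is therefore a restatement of the proposition, not a step towards it. The appeal to the universal property of $\Perfdst$ and to ``unwinding the definition $\tgtlie_X = \lie_X(\for{(X \times X)})$'' does not supply the missing argument: nothing in the construction of $\tgtlie_X$ gives you direct access to its bracket as an Atiyah class without already knowing the proposition.

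The paper's argument avoids this circle entirely: it never compares two Lie maps into $\gl(\T_X[-1])$. Instead, it uses the machinery of \autoref{adjrep-adjoint}, \autoref{adjointderivation} and \autoref{adjrep-basechange} to build, by representability, an honest morphism $\lierep_X(\T_X[-1]) \to \adjrep_X(\tgtlie_X)$ in $\dgRep_X(\tgtlie_X)$. Concretely, one constructs a natural transformation $\beta_X \colon \Lqcoh^X(\formal_X \tgtlie_X) \times \Delta^1 \to \comma{\formal_X(\tgtlie_X)}{\dStF_X}$; the endpoint $\beta_X(-,0)$ has right adjoint $g_X \circ \adjrep_X \circ \lie_X$, while $\Map(\beta_X(-,1),\for{(X\times X)})$ is represented by $\nu_X(\T_X[-1])$. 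This yields a map $\nu_X(\T_X[-1]) \to g_X \adjrep_X(\tgtlie_X)$, hence $\lierep_X(\T_X[-1]) = f_X\nu_X(\T_X[-1]) \to \adjrep_X(\tgtlie_X)$, and only \emph{then} does one check on underlying sheaves that it is an equivalence. The key idea you are missing is the construction of this comparison map via the square-zero/Chevalley--Eilenberg calculus (\autoref{adjointderivation}, \autoref{sqzeronicou}), rather than via the Atiyah class. A secondary issue: your reduction to the affine case is not as clean as you suggest, since the paper explicitly warns that $\lie_X$ need not commute with base change; the paper's argument is set up functorially over $\int \dgLie\op$ precisely to sidestep this.
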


The above proposition, coupled with \autoref{derivedcat-atiyah}, implies that the bracket of $\tgtlie_X$ is as expected given by the Atiyah class of the tangent complex.
To prove it, we will need a few constructions.
\begin{lem}\label{adjrep-adjoint}
Let $A \in \MCcdga_k$ and $L \in \MCdgLie_A$. To any $A$-dg-Lie algebra $L'$ with a morphism $\alpha \colon L \to L'$ we associate the underlying representation $\psi^A_L(L')$ of $L$ -- ie the $A$-dg-module $L'$ with the action of $L$ through the morphism $\alpha$.
The functor $\psi_L^A$ preserves quasi-isomorphisms. It induces a functor between the localised $(\infty,1)$-categories, which admits a left adjoint $\phi_L^A$:
\[
\phi_L^A \colon \dgRep_A(L) \rightleftarrows \comma{L}{\dgLie_A} \noloc \psi^A_L
\]
\end{lem}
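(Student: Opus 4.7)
The plan is to verify first that $\psi^A_L$ preserves quasi-isomorphisms so that it descends to a functor between the localised $(\infty,1)$-categories, and then to produce the left adjoint $\phi^A_L$ by appealing to the presentable adjoint functor theorem.

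The preservation of quasi-isomorphisms should be immediate: by construction, the underlying $A$-dg-module of $\psi^A_L(L')$ is that of $L'$ itself, and weak equivalences in $\MCdgRep_A(L)$ are detected on underlying $A$-dg-modules, just like weak equivalences of $A$-dg-Lie algebras under $L$. So a direct unwinding of definitions does the job, and it yields as well an induced functor $\psi^A_L \colon \comma{L}{\dgLie_A} \to \dgRep_A(L)$ of $(\infty,1)$-categories.

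For the existence of a left adjoint, I would appeal to the presentable adjoint functor theorem (\cite[5.5.2.9]{lurie:htt}). Both $\comma{L}{\dgLie_A}$ and $\dgRep_A(L)$ are presentable $(\infty,1)$-categories, the former as a slice of the presentable category $\dgLie_A$. The functor $\psi^A_L$ fits into a commutative triangle over $\dgMod_A$ with the two obvious ``forget everything'' functors. Both legs going down to $\dgMod_A$ are conservative, accessible, and preserve small limits, as they are right adjoints to the respective free constructions $V \mapsto L \amalg \libre_A(V)$ and $V \mapsto \Envel_A(L) \otimes_A V$. A short diagram chase then gives that $\psi^A_L$ itself preserves limits and $\kappa$-filtered colimits for some regular cardinal $\kappa$, which is enough to produce the left adjoint $\phi^A_L$.

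The only delicate point I anticipate is checking that the passage from the strict 1-categorical forgetful functors to their $(\infty,1)$-categorical avatars really preserves limits and filtered colimits, but this reduces to the standard fact that both forgetful functors are right Quillen between the relevant model structures on $\MCdgLie_A$, $\MCdgRep_A(L)$ and $\MCdgMod_A$. As a sanity check and as a tool for later computations, one may describe $\phi^A_L$ explicitly on free representations: $\phi^A_L\bigl(\Envel_A(L) \otimes_A V\bigr) \simeq L \amalg \libre_A(V)$ viewed as an object under $L$, and the general formula follows by extending along sifted colimits of free representations.
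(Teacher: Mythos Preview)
Your proposal is correct and follows essentially the same route as the paper: both arguments invoke the presentable adjoint functor theorem \cite[5.5.2.9]{lurie:htt}, and both deduce accessibility of $\psi^A_L$ from the fact that $\dgRep_A(L)$ and $\comma{L}{\dgLie_A}$ sit compatibly over $\dgMod_A$ via forgetful functors. The paper phrases this last step more tersely, simply noting that both categories are monadic over $\dgMod_A$ and hence $\psi^A_L$ is $\omega$-accessible; your commutative-triangle argument unpacks the same idea. Your explicit description $\phi^A_L(\Envel_A(L)\otimes_A V)\simeq L\amalg\libre_A(V)$ is not in the paper's proof of this lemma but is exactly what gets used later in the proof of \autoref{adjrep-basechange}, so it is a welcome addition.
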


\begin{proof}
The functor $\psi^A_L$ preserves small limits and both its ends are presentable $(\infty,1)$-categories. Since both $\dgRep_A(L)$ and $\dgLie_A$ are monadic over $\dgMod_A$, the functor $\psi^A_L$ is accessible for the cardinal $\omega$. The result follows from \cite[5.5.2.9]{lurie:htt}
\end{proof}

\begin{lem}\label{adjointderivation}
Let $A$ be a cdga and $L_0$ be a dg-Lie algebra over $A$.
There is a natural transformation
\[
\mymatrix{
\comma{L_0}{\MCdgLie_A} \ar[rrrr]^-{\coho_A} \ar[dr]_{\psi^A_{L_0}}
&& \ar@{<=}[d] && \left(\quot{\MCcdgaunbounded_A}{\coho_A L_0}\right)\op
\\ & \MCdgRep_A(L_0) \ar[rr]_-{g^A_{L_0}\left(\dual{(-)}\right)} && \left(\MCdgMod_{\coho_A L_0}\right)\op \ar[ur]_-{\hspace{4mm}\coho_A L_0 \oplus (-)[-1]}
}
\]
where $g^A_{L_0}$ was defined in \autoref{derivedcat-local} and where $\coho_A L_0 \oplus (-)[-1]$ is a trivial square zero extension functor.
\end{lem}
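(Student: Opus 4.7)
The plan is to construct, naturally in $L \in \comma{L_0}{\MCdgLie_A}$, a morphism in $\quot{\MCcdgaunbounded_A}{\coho_A L_0}$ of the form
\[
\coho_A(L) \longrightarrow \coho_A L_0 \oplus g^A_{L_0}(\dual L)[-1],
\]
which in the opposite category realises the double arrow of the diagram. Such a morphism consists of two pieces of data: the structure map $\coho_A(L) \to \coho_A L_0$, obtained by applying $\coho_A$ to $f \colon L_0 \to L$, and a derivation $\delta_L \colon \coho_A(L) \to g^A_{L_0}(\dual L)[-1]$, where the target is viewed as a $\coho_A(L)$-module via that structure map.

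To build $\delta_L$, I would use the identification $g^A_{L_0}(\dual L)[-1] \simeq g^A_{L_0}(\dual L[-1])$ and the fact that the adjoint action restricted along $f$ makes $L[1]$ an $L_0$-dg-module. I can then form the homological Chevalley-Eilenberg complex of $L_0$ with coefficients in $L[1]$, namely $\Sym_A(L_0[1]) \otimes_A L[1]$ equipped with its CE differential, and construct a natural chain map
\[
\mu_L \colon \Sym_A(L_0[1]) \otimes_A L[1] \longrightarrow \Sym_A(L[1]) \simeq \homol_A(L),
\]
sending a pure tensor $\xi_1 \cdots \xi_n \otimes \xi$ to $f(\xi_1) \cdots f(\xi_n) \cdot \xi$. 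Dualising $\mu_L$ via $\Homint_A(-, A)$ and using the PBW identification $\Envel_A(A[\eta] \otimes_A L_0) \simeq \Envel_A(L_0) \otimes_A \Sym_A(L_0[1])$ to recognise $\Homint_A(\Sym_A(L_0[1]) \otimes_A L[1], A) \simeq g^A_{L_0}(\dual L[-1])$, I obtain $\delta_L$. The Leibniz rule then holds for free because $\mu_L$ is built from the $\Sym_A$-multiplication; only compatibility with differentials requires actual work.

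I expect the main obstacle to be verifying that $\mu_L$ is a chain map. The CE differential on $\Sym_A(L_0[1]) \otimes_A L[1]$ combines internal differentials, the bracket of $L_0$ contracting pairs inside $\Sym_A(L_0[1])$, and the $L_0$-action on $L[1]$; on the target $\Sym_A(L[1])$, only the bracket on $L$ appears. Matching the two reduces to the identities $f[\xi_i, \xi_j]_{L_0} = [f(\xi_i), f(\xi_j)]_L$ (because $f$ is a Lie map) and $\xi_i \cdot \xi = [f(\xi_i), \xi]_L$ (by definition of the adjoint action via $f$), modulo a standard sign bookkeeping similar to the one in the proof of \autoref{homol-inftyfunctor}. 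Once the chain-map property is established, naturality of the pair $(\coho_A(L) \to \coho_A L_0, \delta_L)$ in $L$ is immediate by functoriality of all the ingredients, and the strict nature of the construction yields a genuine natural transformation between the strict functors appearing in the diagram.
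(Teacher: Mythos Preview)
Your proposal is correct and follows essentially the same route as the paper. The paper defines $\delta_L$ by dualising the very same map $\mu_L\colon \Sym_A(L_0[1])\otimes_A L[1]\to \Sym_A(L[1])$ you describe (written there as the composite $\Sym_A(L_0[1])\otimes L[1]\to \Sym_A(L[1])\otimes L[1]\to \Sym_A(L[1])$), then verifies compatibility with differentials by an explicit sign computation on the dual side rather than on $\mu_L$ directly; your plan to check it on $\mu_L$ is equivalent and arguably cleaner, and your observation that the derivation property comes for free (because $v\in L[1]$ is primitive in the coalgebra $\Sym_A(L[1])$) is exactly what the paper alludes to when it says one checks it ``with great enthusiasm''.
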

\begin{proof}
Let $\alpha \colon L_0 \to L$ be a morphism of $A$-dg-Lie algebras.
The composite morphism
\[
\Sym_A(L_0[1]) \otimes_A L[1] \to^\alpha \Sym_A(L[1]) \otimes_A L[1] \to \Sym_A(L[1])
\]
induces a morphism
\[
\Homint_A(\Sym_A(L[1]),A) \to \Homint_A(\Sym(L_0[1]) \otimes L[1],A) \simeq \Homint_A(\Sym_A(L_0[1]), \dual L [-1])
\]
Using \autoref{describeg}, this defines a map of \emph{graded} modules $\delta_L \colon \coho_A L \to g_{L_0}^A(\dual L)[-1]$.
Let us prove that it commutes with the differentials. Recall the notations $S_{i}$ and $T_{ij}$ from \autoref{rmq-coalg}. We compute on one hand
\begin{align*}
\delta(d \xi)(\el{\eta.x}{n}[\otimes]) &(\eta.y_{n+1}) = \sum_{i\leq n+1} (-1)^{S_i+1} \xi(\eta.y_1 \otimes \dots \otimes \eta.dy_i \otimes \dots \otimes \eta.y_{n+1}) \\
& + \sum_{i < j \leq n+1} (-1)^{T_{ij}} \xi(\eta.[y_i,y_j] \otimes \eta.y_1 \otimes \dots \otimes \widehat{\eta.y_i} \otimes \dots \otimes \widehat{\eta.y_j} \otimes \dots \otimes \eta.y_{n+1})\\
& + d(\xi(\el{\eta.y}{n+1}[\otimes]))
\end{align*}
where $y_i$ denotes $\alpha x_i$, for any $i \leq n$. On the other hand, we have
\begin{align*}
(d(\delta \xi)) (\el{\eta.x}{n}[\otimes]) &= \sum_{i\leq n} (-1)^{S_i+1} \delta\xi(\eta.x_1 \otimes \dots \otimes \eta.dx_i \otimes \dots \otimes \eta.x_{n}) \\
& + \sum_{i < j \leq n} (-1)^{T_{ij}} \delta\xi(\eta.[x_i,x_j] \otimes \eta.x_1 \otimes \dots \otimes \widehat{\eta.x_i} \otimes \dots \otimes \widehat{\eta.x_j} \otimes \dots \otimes x_{n}) \\
& + \sum_{i \leq n} (-1)^{(|x_i| + 1)S_i} x_i \bullet \delta \xi(\eta.x_1 \otimes \dots \otimes \widehat{\eta.x_i} \otimes \dots \otimes \eta.x_n) \\
& + d(\delta \xi(\el{\eta.x}{n}[\otimes]))
\end{align*}
where $\bullet$ denotes the action of $L_0$ on $\dual L$.
We thus have
\begin{align*}
(d(\delta \xi)) (&\el{\eta.x}{n}[\otimes])(\eta.y_{n+1}) = \sum_{i\leq n} (-1)^{S_i+1} \xi(\eta.y_1 \otimes \dots \otimes \eta.dy_i \otimes \dots \otimes \eta.y_{n} \otimes \eta.y_{n+1}) \\
& + \sum_{i < j \leq n} (-1)^{T_{ij}} \xi(\eta.[y_i,y_j] \otimes \eta.y_1 \otimes \dots \otimes \widehat{\eta.y_i} \otimes \dots \otimes \widehat{\eta.y_j} \otimes \dots \otimes \eta.y_{n} \otimes \eta.y_{n+1}) \\
& + \sum_{i \leq n} (-1)^{(|y_i| + 1)S_i + (|y_i|-1 + S_{n+1})|y_i|} \xi(\eta.y_1 \otimes \dots \otimes \widehat{\eta.y_i} \otimes \dots \eta.y_n \otimes \eta.[y_{n+1},y_i]) \\
& + d(\delta \xi(\el{\eta.x}{n}[\otimes]))(\eta.y_{n+1})
\end{align*}
Now computing the difference $\delta(d \xi)(\el{\eta.x}{n}[\otimes]) (\eta.y_{n+1}) - (d(\delta \xi))(\el{\eta.x}{n}[\otimes])(\eta.y_{n+1})$ we get
\begin{align*}
(-1)^{S_{n+1} + 1} \xi(\eta.y_1& \otimes \dots \otimes \eta.y_n \otimes \eta.dy_{n+1}) \\ &+ d(\xi(\el{\eta.y}{n+1}[\otimes])) - d( \delta \xi(\el{\eta.x}{n}[\otimes]))(\eta.y_{n+1}) = 0
\end{align*}
It follows that $\delta_L$ is indeed a morphism of complexes $\coho_A L \to g^A_{L_0}(\dual L)[-1]$.
It is moreover $A$-linear.
One checks with great enthusiasm that it is a derivation. This construction is moreover functorial in $L$ and we get the announced natural transformation.
\end{proof}

Let us define the category $\int \comma{\pt}{\MCdgLie}$ as follows
\begin{itemize}
\item An object is a triple $(A,L,L \to L_1)$ where $A \in \MCcdga_k$ and $L \to L_1 \in \MCdgLie_A$.
\item A morphism $(A,L,L \to L_1) \to (B,L',L' \to L_1')$ is the data of
\begin{itemize}
\item A morphism $A \to B$ in $\MCcdga_k$,
\item A commutative diagram
\[
\mymatrix{
L' \ar[r] \ar[d] & L \otimes_A B \ar[d] \\ L_1' & L_1 \otimes_A B \ar[l]
}
\]
\end{itemize}
\end{itemize}

This category comes with a coCartesian projection to $\int \comma{\pt}{\MCdgLie} \to \int \MCdgLie\op$.
The forgetful functor $\comma{L}{\MCdgLie_A} \to \MCdgRep_A(L)$ define a functor $\adjrep$ such that the following triangle commutes
\[
\mymatrix{
\int \comma{\pt}{\MCdgLie} \ar[rr]^-\adjrep \ar[dr] & & \int \MCdgRep \ar[dl] \\ & \int \MCdgLie\op
}
\]
Let us define the category $\int (\quot{\MCcdgaunbounded}{\coho(-)})\op$ as follows
\begin{itemize}
\item An object is a triple $(A,L,B)$ where $(A,L) \in \int \MCdgLie\op$ and $B$ is a cdga over $A$ with a map $B \to \coho_A L$ ;
\item A morphism $(A,L,B) \to (A',L',B')$ is a commutative diagram
\[
\mymatrix{
A \ar[d] \ar[r] & B \ar[r] \ar[d] & \coho_A L \ar[d] \\
A' \ar[r] & B' \ar[r] & \coho_{A'} L'
}
\]
where $\coho_A L \to \coho_{A'} L'$ is induced by a given morphism $L' \to L \otimes^\Lcot_A A'$.
\end{itemize}
Let us remark here that $\coho$ induces a functor $\chi \colon \int \comma{\pt}{\MCdgLie} \to \int (\quot{\MCcdgaunbounded}{\coho(-)})\op$ which commutes with the projections to $\int \MCdgLie\op$.
The construction $\MCdgRep_A(L) \to (\quot{\MCcdgaunbounded_A}{\coho_A L})\op$
\[
V \mapsto \coho_A L \oplus g^A_L(\dual V)[1]
\]
defines a functor
\[
\theta \colon \int \MCdgRep \to \int \left(\quot{\MCcdgaunbounded}{\coho(-)}\right)\op
\]
and \autoref{adjointderivation} gives a natural transformation $\theta \adjrep \to \chi$.
Localising along quasi-isomorphisms, we get a thetaedron
\[
\shorthandoff{:;!?}
\xy <6mm,0cm>:
(1,0)*+{\int \quot{*}{\dgLie}}="0",
(5,-2)*+{\int \dgRep}="1",
(-3,-2)*+{\int (\quot{\cdgaunbounded}{\coho(-)})\op}="2",
(3,-5)*+{\int \MCdgLie\op}="3",
\ar "0";"1" ^(0.6)\adjrep
\ar "0";"2" _\chi
\ar "1";"2" ^\theta
\ar "0";"3" |!{"1";"2"}\hole ^(0.6)r
\ar "1";"3" ^q
\ar "2";"3" _p
\endxy
\]
where the upper face is filled with the natural transformation $\theta \adjrep \to \chi$ and the other faces are commutative.
\begin{lem}\label{adjrep-basechange}
The functor $\adjrep$ admits a relative left adjoint $\phi$ over $\int \MCdgLie\op$. Moreover, the induced natural transformation $\theta \to \theta \adjrep \phi \to \chi \phi$ is an equivalence.
\end{lem}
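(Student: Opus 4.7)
The proof divides into two tasks: establishing the existence of the relative left adjoint $\phi$, then verifying that the induced natural transformation is an equivalence. For the first, $\adjrep$ is a morphism of coCartesian fibrations over $\int \MCdgLie\op$ preserving coCartesian morphisms, whose fiber over $(A,L)$ is the forgetful $\psi^A_L \colon \comma{L}{\dgLie_A} \to \dgRep_A(L)$. By \autoref{adjrep-adjoint} this fiber admits a left adjoint $\phi^A_L$, and applying \cite[7.3.2.6]{lurie:halg} these fiberwise left adjoints assemble into a relative left adjoint $\phi \colon \int \dgRep \to \int \comma{\pt}{\dgLie}$ over $\int \MCdgLie\op$; note that $\phi$ need not preserve coCartesian morphisms.

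The natural transformation in question is the composite $\theta \to \theta \adjrep \phi \to \chi \phi$, where the first arrow is $\theta$ applied to the unit of the relative adjunction and the second is obtained by applying the natural transformation $\theta \adjrep \to \chi$ of \autoref{adjointderivation} to $\phi$. Since both endpoints are defined over $\int \MCdgLie\op$, checking the equivalence reduces to a pointwise verification: fix $(A,L)$ and $M \in \dgRep_A(L)$, and show that the induced map of augmented cdgas over $\coho_A L$,
\[
\chi \phi(M) = \coho_A(\phi^A_L(M)) \longrightarrow \coho_A L \oplus g^A_L(\dual M[-1]) = \theta(M),
\]
is a quasi-isomorphism.

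The plan is to reduce to the case where $M = \Envel_A L \otimes_A V$ is a free $L$-representation with $V$ a free $A$-dg-module. The forgetful $\dgRep_A(L) \to \dgMod_A$ is monadic, so such objects generate $\dgRep_A(L)$ under sifted colimits; both $\theta$ and $\chi \phi$ send those sifted colimits to sifted colimits in $\left(\quot{\cdgaunbounded_A}{\coho_A L}\right)\op$, by a combination of $\phi^A_L$ being a left adjoint, $\coho_A$ preserving sifted colimits (\autoref{chevalleycolim}), and the analogous properties of $g^A_L$ and dualisation on the $\theta$ side. For such an $M$, the universal property of $\phi^A_L$ identifies $\phi^A_L(M)$ with the coproduct $L \amalg \libre_A V$ in $\dgLie_A$, taken under the canonical inclusion of $L$. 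Combining \autoref{chevalleycolim} and \autoref{ext-coho} then produces
\[
\chi \phi(M) \simeq \coho_A L \timesunder[A] \coho_A(\libre_A V) \simeq \coho_A L \timesunder[A] (A \oplus \dual V[-1]) \simeq \coho_A L \oplus \dual V[-1],
\]
a trivial square-zero extension of $\coho_A L$. A parallel computation using the Hom--tensor adjunction for $\Envel_A L$ yields $g^A_L(\dual M[-1]) \simeq \dual V[-1]$, exhibiting $\theta(M)$ as the same extension. The main obstacle is to confirm that the comparison map $\chi \phi \to \theta$ is indeed the identity under these identifications, which requires unwinding the derivation $\delta_M$ of \autoref{adjointderivation} on the coproduct $L \amalg \libre_A V$ and matching it to the canonical projection onto the square-zero factor; once that naturality check is carried out, the sifted-colimit reduction extends the equivalence to all $M$.
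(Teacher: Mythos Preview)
Your proposal is correct and follows essentially the same route as the paper: existence of $\phi$ via the fiberwise adjunctions of \autoref{adjrep-adjoint} together with Lurie's relative adjoint criterion, then a fiberwise check reducing to free representations $M = \Envel_A L \otimes_A V$ using that these generate $\dgRep_A(L)$ under colimits, the identification $\phi^A_L(M) \simeq L \amalg \libre_A V$, and the resulting computation of both $\chi\phi(M)$ and $\theta(M)$ as $\coho_A L \oplus \dual V[-1]$.

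The one place where the paper is sharper is precisely the step you flag as ``the main obstacle''. Rather than unwinding the explicit derivation $\delta_M$ to match the two square-zero factors, the paper observes that the induced map on those factors is the component at $V$ of a natural transformation $\dual{(-)}[-1] \to g^A_L(\dual{(\Envel_A L \otimes -)}[-1])$ between functors $\dgMod_A \to \dgMod_A\op$, and that both of these functors are left adjoint to the \emph{same} functor (namely $\dual{(-)}[-1]$, using the $(f^A_L, g^A_L)$-adjunction and the free--forgetful adjunction for $\Envel_A L$). Uniqueness of left adjoints then forces the comparison to be an equivalence, with no pointwise computation needed. This replaces your deferred naturality check with a one-line categorical argument.
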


\begin{proof}
The first statement is a consequence of \autoref{adjrep-adjoint} and \cite[8.3.2.11]{lurie:halg}.
To prove the second one, we fix a pair $(A,L) \in \int \MCdgLie\op$ and study the induced natural transformation
\[
\mymatrix{
\dgRep_A(L) \ar[rr] \ar[rd] && \left(\quot{\cdgaunbounded_A}{\coho_A L}\right)\op \\
& \comma{L}{\dgLie_A} \ar[ur] \ar@{<=}[u] &
}
\]
The category $\dgRep_A(L)$ is generated under colimits of the free representations $\Envel_A L \otimes N$, where $N \in \dgMod_A$.
Both the upper and the lower functors map colimits to limits. 
Since $\phi^A_L(\Envel_A L \otimes N) \simeq L \amalg \libre_A (N)$, we can restrict to proving that the induced morphism
\[
\coho_A L \oplus \dual N[-1] \to \coho_A L \oplus g^A_L(\dual{(\Envel_A L \otimes N)}[-1])
\]
is an equivalence.
We have the following morphism between exact sequences
\[
\mymatrix{
\dual N[-1] \ar[r] \ar[d]^\beta & \coho_A L \oplus \dual N[-1] \ar[d] \ar[r] & \coho_A L \ar[d]^{=} \\
g^A_L(\dual{(\Envel_A L \otimes N)}[-1]) \ar[r] & \coho_A L \oplus g^A_L(\dual{(\Envel_A L \otimes N)}[-1]) \ar[r] & \coho_A L
}
\]
Since the functors $\dual{(-)}$ and $g^A_L(\dual{(\Envel_A L \otimes -)}[-1])$ from $\dgMod_A$ to $\dgMod_A\op$ are both left adjoint to the same functor, the morphism $\beta$ is an equivalence.
\end{proof}

Let us now consider the functor
\[
\mymatrix{
\int \dgModLib_{\coho(-)} \ar[r] & \int \dgMod_{\coho(-)} \ar[r]^-f & \int \dgRep
}
\]
\begin{rmq}\label{sqzeronicou}
Duality and \autoref{derivedcat-ff-local} make the composite functor
\[
\mymatrix{
\int \dgModLib_{\coho(-)} \ar[r]^-f & \int \dgRep \ar[r]^-\theta & \int (\quot{\cdgaunbounded}{\coho(-)})\op
}
\]
equivalent to the functor $(A,L,M) \mapsto \coho_A L \bigoplus \dual M[-1]$.
\end{rmq}

\begin{rmq}
The composite functor
\begin{align*}
\int \dgModLib_{\coho(-)} \times_{\int \MCdgLie\op} \int (\MCdgLieLib)\op &\to \int \dgRep \times_{\int \MCdgLie\op} \int (\MCdgLieLib)\op\\
&\to \int \comma{\pt}{\dgLie} \times_{\int \MCdgLie\op} \int (\MCdgLieLib)\op
\end{align*}
has values in the full subcategory of good dg-Lie algebras $\int \comma{\pt}{\dgLieGood}$. Using \autoref{commute-coho-good}, we see that the functor
\[
\int \dgModLib_{\coho(-)} \times_{\int \MCdgLie\op} \int (\MCdgLieLib)\op \to \int (\quot{\cdgaunbounded}{\coho(-)})\op \times_{\int \MCdgLie\op} \int (\MCdgLieLib)\op
\]
preserves coCartesian morphisms.
We finally get a natural transformation
\[
\mymatrix{
\int (\dgLieLib)\op \dcell[rr][{\dgModLib_{\coho(-)}}][{(\quot{\cdgaunbounded}{\coho(-)})\op}][][=>][12pt] && \inftyCatu V
}
\]
There is also a Yoneda natural transformation ${(\quot{\cdgaunbounded}{\coho(-)})\op} \to \comma{\Spec(\coho(-))}{\dStF}$ and we get
\[
\xi \colon \dgModLib_{\coho(-)} \to \comma{\Spec(\coho(-))}{\dStF}
\]
Let us recall \autoref{formal-nattrans}. It defines a natural transformation
\[
\zeta \colon \dgModLib_{\coho(-)} \times \Delta^1 \to \comma{\Spec(\coho(-))}{\dStF}
\]
such that $\zeta(-,0) \simeq \formal \phi f$ and $\zeta(-,1) \simeq \xi \simeq h \theta f$.
\end{rmq}

We are at last ready to prove \autoref{adjointrepresentation}.

\begin{proof}[of \autoref{adjointrepresentation}]
Extending the preceding construction by sifted colimits, we get a natural transformation $\beta \colon \Lqcoh(\formal(-)) \times \Delta^1 \to \comma{\formal(-)}{\dStF}$ of functors $\int \dgLie\op \to \inftyCatu V$.
Let now $X$ denote an Artin derived stack locally of finite presentation. We get a functor
\[
\beta_X \colon \Lqcoh^X(\formal_X \tgtlie_X) \times \Delta^1 \to \comma{\formal_X(\tgtlie_X)}{\dStF_X}
\]
On the one hand, the functor $\beta_X(-,0)$ admits a right adjoint, namely the functor
\[
\mymatrix{
\comma{\formal_X(\tgtlie_X)}{\dStF_X} \ar[r]^-{\lie_X} & \comma{\tgtlie_X}{\dgLie_X} \ar[r]^-{\adjrep_X} & \dgRep_X(\tgtlie_X) \ar[r]^-{g_X} & \Lqcoh^X(\formal \tgtlie_X)
}
\]
On the other hand, using \autoref{sqzeronicou} and \autoref{describeformalcompletion}, we have an equivalence of functors
\begin{align*}
\Map(\beta_X(-,1), \for{(X \times X)})
&\simeq \lim_{u \colon \Spec A \to X} \underset{L \in \dgLieLib_A}{\lim_{L \to u^* \tgtlie_X}} \Map_{\Spec \coho_A L/-}(\Spec(C_A L \oplus \dual {u^*(-)}[-1]), X) \\
&\simeq \lim_{u \colon \Spec A \to X} \underset{L \in \dgLieLib_A}{\lim_{L \to u^* \tgtlie_X}} \Map_{\dgMod_{\coho_A L}}(-, \nu_L^* \T_X[-1])
\end{align*}
where $\nu_L$ is the map $\Spec (\coho_A L) \simeq \formal_A(L) \to X$ induced by $L \to u^* \tgtlie_X \simeq \lie_A(\for{(\Spec A \times X)})$.
We get $\Map(\beta_X(-,1),\for{(X \times X)}) \simeq \Map(-,\nu_X \T_X[-1])$ where $\nu_X$ is the functor 
\[
\nu_X \colon \Qcoh(X) \to \Lqcoh^X(\for{(X \times X)})
\]
defined in the proof of \autoref{derived-global}.
The natural transformation $\beta_X \colon \beta_X(-,0) \to \beta_X(-,1)$ therefore induces a morphism
\[
\nu_X(\T_X[-1]) \to g_X \adjrep_X \lie_X(\for{(X \times X)}) \simeq g_X \adjrep_X(\tgtlie_X)
\]
and hence, by adjunction, a morphism $\lierep_X(\T_X[-1]) = f_X \nu_X (\T_X[-1]) \to \adjrep_X(\tgtlie_X)$.
It now suffices to test on the underlying quasi-coherent sheaves on $X$, that it is an equivalence. Both the left and right hand sides are equivalent to $\T_X[-1]$.
\end{proof}

\phantomsection
\addcontentsline{toc}{section}{\iflanguage{francais}{Références}{References}}

\end{document}